\newtheorem{theorem}{Theorem}[section]
\newtheorem{lemma}[theorem]{Lemma}
\newtheorem{proposition}[theorem]{Proposition}
\newtheorem{corollary}[theorem]{Corollary}
\newtheorem{example}[theorem]{Example}
\newtheorem{conjecture}{Conjecture}[section]
\newtheorem{question}{Question}[section]
\theoremstyle{definition}
\DeclareMathOperator{\mrr+}{mr_{+}}
\DeclareMathOperator{\mr}{mr}
\DeclareMathOperator{\nul}{null}
\DeclareMathOperator{\rk}{rank}
\newcommand\Tstrut{\rule{0pt}{2.6ex}}         
\newcommand\Bstrut{\rule[-0.9ex]{0pt}{0pt}}   
\title[Achievable Multiplicity partitions in the IEVP of a graph]{Achievable Multiplicity partitions in the inverse eigenvalue problem of a graph}
\author[M.~Adm]{Mohammad Adm\textsuperscript{1,2}}
\thanks{\textsuperscript{1}Department of Applied Mathematics and Physics, Palestine Polytechnic University, Hebron, Palestine}
\thanks{\textsuperscript{2}Department of Mathematics and Statistics,  University of Konstanz, Konstanz, Germany}
\thanks{\textsuperscript{3}Department of Mathematics and Statistics,  University of Regina, Regina, SK, S4S 0A2, Canada}
\author[S.~Fallat]{Shaun Fallat\textsuperscript{3}}
\author[K.~Meagher]{Karen Meagher\textsuperscript{3}}
\author[S.~Nasserasr]{Shahla Nasserasr\textsuperscript{3,4}}
\thanks{\textsuperscript{4}Department of Mathematics and Computer Science, Brandon University,
Brandon, MB R7A 6A9, Canada}
\author[S.~Plosker]{Sarah Plosker\textsuperscript{3,4}}
\author[B.~Yang]{Boting Yang\textsuperscript{5}}
\thanks{\textsuperscript{5}Department of Computer Science,  University of Regina, Regina, SK, S4S 0A2, Canada}
\date{\today}
\keywords{inverse eigenvalue problem, multiplicity partition, adjacency matrix, minimum rank, distinct eigenvalues, graphs}
\subjclass[2010]{ 	05C50,    
 		15A18 
 		}
\begin{document}

\begin{abstract}  
Associated to a graph $G$ is a set $\mathcal{S}(G)$ of all real-valued symmetric matrices whose off-diagonal entries are nonzero precisely when the corresponding vertices of the graph are adjacent, and the diagonal entries are free to be chosen. 
If $G$ has $n$ vertices, then the multiplicities of the eigenvalues of any matrix in $\mathcal{S}(G)$ partition $n$; this is called a multiplicity partition. 

We study graphs for which a multiplicity partition with only two integers is possible. The graphs $G$ for which there is a matrix in $\mathcal{S}(G)$ with partitions $[n-2,2]$ have been characterized. We find families of graphs $G$ for which there is a matrix in $\mathcal{S}(G)$ with multiplicity partition $[n-k,k]$ for $k\geq 2$. We focus on generalizations of the complete multipartite graphs. We provide some methods to construct families of graphs with given multiplicity partitions starting from smaller such graphs. We also give constructions for graphs with  matrix in $\mathcal{S}(G)$ with multiplicity partition $[n-k,k]$ to show the complexities of characterizing these graphs.
\end{abstract}

\maketitle

\section{Introduction}
Let $G$ be a simple graph on $n$ vertices and consider the set  $\mathcal{S}(G)$ of all possible real-valued weighted symmetric adjacency matrices associated to $G$, where the diagonal entries are free in that they may or may not be zero (the restriction herein to simple graphs avoids some unnecessary confusion when stating and proving results). The notation $\lambda_i^{(n_i)}$ is used to denote the eigenvalue $\lambda_i$ with multiplicity $n_i$. In this work, we order the eigenvalues according to their corresponding multiplicities. That is, a matrix $A\in \mathcal{S}(G)$ has spectrum $\sigma(A) =\{ \lambda_1^{(n_1)}, \dots, \lambda_\ell^{(n_\ell)}\}$, where $n_1\geq \cdots\geq  n_\ell$ and the eigenvalues $\lambda_i$ are distinct. The list of eigenvalues could also be ordered according to their values, this is distinctly different from the ordering considered in this work; see \cite{KS}. 
 
 If $q(A)$ is the number of distinct eigenvalues of a symmetric matrix $A$, then for a given graph $G$, we let $q(G)=\min\{q(A): A\in \mathcal{S}(G)\}$, and refer to this parameter as the minimum number of distinct eigenvalues of $G$. It is well-known that $q(G)=1$ if and only if $G$ has no edges, and $q(G)=n$ if and only if $G$  is a path on $n$ vertices; see ~\cite{AACFMN}. Graphs with $q(G)=n-1$ are characterized in \cite{BFHHLS}. Some conditions for graphs attaining $q(G)=c$, for $c\in \{2,\dots, n-2\}$ are given in~\cite{AACFMN}. For example, if a connected graph $G$ on at least three vertices has $q(G) =2$, then $G$ cannot have a cut edge (an edge whose deletion results
in a disconnected graph), and any two non-adjacent vertices of $G$ must have at least two common neighbours (Lemma 4.2 and Corollary 4.5 of \cite{AACFMN}, respectively). Moreover, Corollary 3.6 of the same paper gives a construction for a graph on $n$ vertices satisfying $q(G)=c$ for any $1\leq c\leq n$.

For positive integers $n_1\geq \cdots\geq  n_\ell$, a partition $[n_1, n_2, \dots, n_\ell]$ of $n$ is said to be \textsl{achievable} by $G$ if there exists an $A \in \mathcal{S}(G)$ such that the spectrum of $A$ is $\{\lambda_1^{(n_1)}, \dots, \lambda_\ell^{(n_\ell)}\}$ for some set of distinct values $\lambda_i$. If $[n_1,n_2, \dots, n_\ell]$ is a partition of $n$, we use the notation $MP([n_1,n_2, \dots, n_\ell])$ to denote the set of all graphs on $n$ vertices for which the partition $[n_1, \dots, n_\ell]$ of  $n$ is achievable. 

Given a graph, some natural questions arise. For example, what multiplicity  partitions are achievable by a given graph or a family of graphs? This question is considered in~\cite{AABBCGKMW, BBFHHLSY} where all achievable partitions are listed for all graphs on fewer than 6 vertices. Another question is if a certain multiplicity partition is achievable for a graph, is it possible to characterize any other multiplicity partitions that are also achievable for the graph? In particular, for two partitions $[n_1,n_2, \dots, n_\ell]$ and $[\tilde{n}_1,\tilde{n}_2, \dots, \tilde{n}_m]$ of  $n$, when is $MP([n_1,n_2, \dots, n_\ell]) \subseteq MP([\tilde{n}_1,\tilde{n}_2, \dots, \tilde{n}_m])$? 

Another natural approach is to characterize the graphs in $MP([n_1, \dots, n_\ell])$ for some partition $[n_1, \dots, n_\ell]$. This has been answered for a limited number of partitions. For example, the main result in~\cite{MShader} is that every connected graph on $n$ vertices is in $MP([1,1, \dots, 1])$. The only graph that is in $MP([n])$ is the graph on $n$ vertices with no edges. The set $MP([n-1,1])$ is exactly the set of graphs on $n$ vertices that have one connected component that is a complete graph and the remaining components are isolated vertices (this includes the graph on $n$ vertices with no edges). The graphs in $MP([n-2,2])$ have also been exactly characterized; this characterization is given in Lemma~\ref{lem:cupvee} and see any of ~\cite{BHL, CGMJ, MS, OS1}) for a proof.

 The questions and studies mentioned above all fit under the general umbrella of the inverse eigenvalue problem (IEVP) of a graph, specifically looking at the achievable multiplicities of the eigenvalues in the IEVP. Many authors have provided partial answers to these questions; see  \cite{AACFMN,  BBFHHLSY, BFHHLS, BHPRT, KS, MS}. Here, we add to the growing body of work, focusing on joins, complete multipartite graphs and the sets $MP([n-k,k])$ for some $k$  with $k \leq \lfloor \frac{n}{2}\rfloor$.

\section{Graphs with two Distinct Eigenvalues}

Our goal is to consider the graphs $G$ with $q(G)=2$; any such graph on $n$ vertices achieves a bipartition $[n-k,k]$ for some $k=1,2,\ldots,\lfloor \frac{n}{2}\rfloor$. The initial results that we state show that this is a very large set of graphs.

The \textsl{join} of two graphs $G$ and $H$ is the graph $G\vee H$ on vertex set $V(G)\cup V(H)$, where all  edges of $G$ and $H$ are preserved, and edges are added to make every vertex of $G$ incident to every vertex of $H$. The following theorem by Monfared and Shader~\cite[Theorem 5.2]{MShader} gives a sufficient condition for the minimum number of distinct eigenvalues to be 2.

 \begin{theorem}\label{thm:join}
 Let $G$ and $H$ be two connected graphs on $n$ vertices. Then \\ $q(G \vee H)=2$.
 \end{theorem}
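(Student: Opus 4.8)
The plan is to exhibit a single matrix $A\in\mathcal{S}(G\vee H)$ whose spectrum consists of exactly two values $r$ and $-r$, each of multiplicity $n$. Since $G\vee H$ has at least one edge it cannot have $q=1$, so producing such an $A$ forces $q(G\vee H)=2$. Ordering the vertices of $G\vee H$ as the $n$ vertices of $G$ followed by the $n$ vertices of $H$, every matrix of $\mathcal{S}(G\vee H)$ has the block form
\[
A=\begin{pmatrix} B & C\\ C^{\top} & D\end{pmatrix},
\]
where $B\in\mathcal{S}(G)$, $D\in\mathcal{S}(H)$, and $C$ is an $n\times n$ matrix all of whose entries are nonzero (these encode the cross edges of the join). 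The idea is to choose $B$ and $D$ so that, after an orthogonal change of basis diagonalizing each diagonal block, $A$ becomes a direct sum of $n$ matched $2\times 2$ blocks, each with eigenvalues $\pm r$.

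Concretely, I would first fix distinct reals $\beta_1,\dots,\beta_n$ and a number $r>\max_i|\beta_i|$. Using the construction of \cite{MShader} in its quantitative form -- every connected graph on $n$ vertices admits a matrix in $\mathcal{S}(\cdot)$ with any prescribed set of $n$ distinct eigenvalues, which strengthens the statement that it lies in $MP([1,\dots,1])$ -- choose $B\in\mathcal{S}(G)$ with spectrum $\{\beta_1,\dots,\beta_n\}$ and $D\in\mathcal{S}(H)$ with spectrum $\{-\beta_1,\dots,-\beta_n\}$. Writing $B=U\operatorname{diag}(\beta_i)U^{\top}$ and $D=V\operatorname{diag}(-\beta_i)V^{\top}$ with $U,V$ orthogonal, set $\Gamma=\operatorname{diag}(\gamma_i)$ with $\gamma_i=\sqrt{r^{2}-\beta_i^{2}}>0$ and put $C=U\Gamma V^{\top}$. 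Conjugating $A$ by $U\oplus V$ turns it into the matrix with diagonal blocks $\operatorname{diag}(\beta_i)$ and $\operatorname{diag}(-\beta_i)$ and off-diagonal block $\Gamma$, which after reordering coordinates is $\bigoplus_{i=1}^{n}\left(\begin{smallmatrix}\beta_i & \gamma_i\\ \gamma_i & -\beta_i\end{smallmatrix}\right)$. Each summand has eigenvalues $\pm\sqrt{\beta_i^{2}+\gamma_i^{2}}=\pm r$, so $\sigma(A)=\{r^{(n)},(-r)^{(n)}\}$ and $q(G\vee H)\le 2$; combined with $q(G\vee H)\ge 2$ this yields the theorem, provided $A$ genuinely lies in $\mathcal{S}(G\vee H)$.

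That last proviso is where the real work lies, namely checking that $A$ has the correct zero--nonzero pattern. The diagonal blocks are automatically fine, since $B\in\mathcal{S}(G)$ and $D\in\mathcal{S}(H)$ by construction and the diagonal is unconstrained. The genuine obstacle is showing that $C=U\Gamma V^{\top}$ has all $n^{2}$ entries nonzero, as demanded by the cross edges of the join. A short computation shows that the two-eigenvalue constraints pin down only the \emph{magnitudes} $\gamma_i=\sqrt{r^{2}-\beta_i^{2}}$ and the frames $U,V$; the signs of the $\gamma_i$ and the particular isospectral representatives $B,D$ remain free, and each condition ``$C_{jk}=0$'' cuts out a proper analytic subvariety of this parameter space. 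I would therefore finish with a genericity argument: varying the $\beta_i$ in a small interval and the orthogonal frames over the positive-dimensional isospectral sets supplied by the realizability theorem, a generic choice avoids all finitely many entrywise vanishing conditions, producing a full $C$ and hence a bona fide $A\in\mathcal{S}(G\vee H)$. Making this freedom explicit -- or instead exhibiting a concrete $U,V,\Gamma$ for which no entry of $U\Gamma V^{\top}$ vanishes -- is the step I expect to demand the most care.
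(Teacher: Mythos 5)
First, a point of reference: the paper does not prove this statement itself; it quotes it from Monfared and Shader \cite{MShader}, so there is no in-paper argument to compare against. Your skeleton reproduces the standard strategy for such results: conjugating by $U\oplus V$ decouples $A$ into $2\times 2$ blocks with trace $0$ and determinant $-\beta_i^2-\gamma_i^2=-r^2$, the lower bound $q(G\vee H)\geq 2$ is immediate from the existence of an edge, and you have correctly isolated the entire difficulty in the requirement that $C=U\Gamma V^{\top}$ be entrywise nonzero. Up to that point everything is sound.

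However, that last step is where the theorem actually lives, and your genericity argument does not close it; as written this is a genuine gap, not a routine verification. Two concrete problems. First, the parameter space you propose to vary over is the coupled set $\{(B,D):B\in\mathcal{S}(G),\ D\in\mathcal{S}(H),\ \sigma(D)=-\sigma(B)\}$ together with sign choices for the $\gamma_i$. To conclude that the union of the $n^2$ loci $\{C_{jk}=0\}$ is proper you need (a) this set to be connected or irreducible, and (b) for each fixed $(j,k)$, at least one admissible point with $C_{jk}\neq 0$. Neither is established and neither is obvious: the realizability theorem you invoke (Theorem~\ref{thm:MS}) is a bare existence statement, so you have no control over how the frames $U,V$ move as the spectra are perturbed, and the isospectral fibres inside $\mathcal{S}(G)$ need not be connected. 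The only freedom you can actually certify, the $2^n$ sign patterns of the $\gamma_i$, is insufficient by counting: each condition $C_{jk}=0$ can exclude up to $2^{n-1}$ sign patterns and there are $n^2$ conditions. Second, the rigid ansatz $C=U\Gamma V^{\top}$ forecloses the tool this paper uses elsewhere for exactly this purpose, namely Lemma~\ref{lem:Barrett}, which would insert a free orthogonal matrix $R$ between the two frames to force entrywise nonvanishing; inserting such an $R$ here destroys the $2\times 2$ decoupling unless $\Gamma$ is a scalar matrix, which is incompatible with the $\beta_i$ being distinct. Likewise, taking $U$ and $V$ to consist of nowhere-zero eigenvectors (Theorem~\ref{thm:MS}) guarantees that every term $\gamma_i\,u_i(j)v_i(k)$ of $C_{jk}$ is nonzero but does not by itself rule out cancellation in the sum. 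Closing the proof requires a further idea at precisely the point you flagged.
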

 
In~\cite{LOS} a large number of graphs are shown to admit only two distinct eigenvalues. Theorem~3.2 of~\cite{LOS} proves for a tree $T$ that $q(\overline{T}) =2$, unless $T$ is $P_4$, or in one of two families of trees. Further, Theorem~2.5 of the same paper proves that many bipartite graphs $G$ have the property  $q(\overline{G}) = 2$. This gives another large and diverse family of graphs with only 2 distinct eigenvalues.

 The results in~\cite{MShader} and \cite{LOS} indicate that the collection of graphs with only two distinct eigenvalues is very large and likely very difficult to characterize. Thus rather than trying to characterize all graphs $G$ with $q(G)=2$, we define the \textsl{minimal multiplicity bipartition} $MB(G)$ to be the least integer $k\leq \lfloor \frac{n}{2}\rfloor $ such that $G$ achieves the multiplicity bipartition $[n-k,k]$ (hence if $k=MB(G)$, then $G\in MP([n-k,k])$ but $G\notin MP([n-m,m])$ for any $m<k$). If $MB(G) =k$, then we say $G$ has \textsl{multiplicity bipartition $k$}. The multiplicity bipartition is only defined for graphs that admit only two distinct eigenvalues and if $G$ has $n$ vertices, then $MB(G) \leq n/2$.

It is easy to note that if $\mathcal{S}(G)$ has a matrix with only two distinct eigenvalues, then for any two distinct real numbers there exists a matrix in $\mathcal{S}(G)$ such that its spectrum consists of these two real numbers. 

For an $m\times n$ matrix $A$, the notation $A[\alpha\,|\,\beta]$ is used to denote the submatrix of $A$ lying in rows indexed by $\alpha$ and columns indexed by $\beta$. We let $J_{n \times m}$ ($J_n$) denote the $n \times m$ ($n \times n$) matrix with all entries equal to one, and we let $0_n$ denote the $n \times n$ zero matrix.

\begin{lemma}\label{lem:vectorconst}
Let $G$ be a graph on $n$ vertices with $q(G) = 2$. Then \\ $G \in  MP([n-k,k])$ if and only if there is an $A \in \mathcal{S}(G)$ with 
\[
A= u_1u_1^T+u_2u_2^T+\dots +u_ku_k^T ,
\]
where $k\leq \lfloor \frac{n}{2}\rfloor $ and $\{u_1, u_2, \dots, u_k\}$ is a set of orthonormal vectors in $\mathbb{R}^n$.
\end{lemma}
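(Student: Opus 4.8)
The plan is to prove both implications via the spectral theorem, using the key observation that a sum $\sum_{j=1}^k u_ju_j^T$ of rank-one terms built from an orthonormal set $\{u_1,\dots,u_k\}$ is precisely the orthogonal projection onto $\operatorname{span}\{u_1,\dots,u_k\}$, and that the symmetric idempotents are exactly the symmetric matrices all of whose eigenvalues lie in $\{0,1\}$. Since the hypothesis $q(G)=2$ together with the remark preceding the lemma lets us rescale and shift any two-eigenvalue matrix in $\mathcal{S}(G)$ so that its two distinct eigenvalues become $0$ and $1$ while preserving both the off-diagonal zero pattern and the multiplicities, the whole statement reduces to recognizing projection matrices.

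For the forward direction, suppose $G\in MP([n-k,k])$. Then there is $B\in\mathcal{S}(G)$ with spectrum $\{\lambda_1^{(n-k)},\lambda_2^{(k)}\}$ for distinct $\lambda_1,\lambda_2$, and the ordering convention $n-k\ge k$ forces $k\le\lfloor n/2\rfloor$. I would first apply the remark to replace $B$ by the matrix $A=(\lambda_2-\lambda_1)^{-1}(B-\lambda_1 I)$, whose eigenvalues are $0$ with multiplicity $n-k$ and $1$ with multiplicity $k$; this $A$ still lies in $\mathcal{S}(G)$ because adding a multiple of $I$ alters only the diagonal and scaling by a nonzero constant preserves the nonzero off-diagonal pattern. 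Writing the spectral decomposition of $A$ and discarding the zero-eigenvalue terms leaves exactly $A=\sum_{j=1}^k u_ju_j^T$, where $\{u_1,\dots,u_k\}$ is an orthonormal basis of the eigenspace of $A$ for the eigenvalue $1$; this is the desired form.

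For the converse, suppose $A\in\mathcal{S}(G)$ has the stated form with $\{u_1,\dots,u_k\}$ orthonormal and $1\le k\le\lfloor n/2\rfloor$. Orthonormality gives $Au_i=u_i$ for each $i$ and $Av=0$ for every $v$ orthogonal to $\operatorname{span}\{u_1,\dots,u_k\}$, so $A$ is the orthogonal projection onto a $k$-dimensional subspace; hence its eigenvalue $1$ has multiplicity exactly $k$ and its eigenvalue $0$ has multiplicity exactly $n-k$. Because $k\le\lfloor n/2\rfloor$ we have $n-k\ge k\ge 1$, so $A$ has two distinct eigenvalues with multiplicity partition $[n-k,k]$, and therefore $G\in MP([n-k,k])$.

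I do not expect any serious obstacle here: the content is the spectral theorem plus the elementary identification of $\sum u_ju_j^T$ with a projector. The only points demanding care are bookkeeping --- confirming that the rescaling keeps the matrix inside $\mathcal{S}(G)$ and that the multiplicities $n-k$ and $k$ appear in the right order so that $[n-k,k]$ is a legitimate weakly decreasing partition --- both of which follow from $k\le\lfloor n/2\rfloor$.
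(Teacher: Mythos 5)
Your proof is correct and follows essentially the same route as the paper: both directions reduce, via the spectral theorem, to writing a matrix with spectrum $\{0^{(n-k)},1^{(k)}\}$ as $UU^T$ for an $n\times k$ matrix $U$ with orthonormal columns. The only cosmetic difference is that you make the affine rescaling $B\mapsto(\lambda_2-\lambda_1)^{-1}(B-\lambda_1 I)$ explicit, whereas the paper invokes it implicitly through the remark preceding the lemma.
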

\begin{proof}
Assume that $q(G) =2$ and $G \in MP([n-k,k])$, then there is a matrix $A \in \mathcal{S}(G)$ with $\sigma(A) = \left \{   0^{(n-k)}, 1^{(k)}\right \}$. Therefore, $A$ can be written as
\begin{eqnarray*}
A=V (I_k \oplus O_{n-k}) V^T = U U^T ,
\end{eqnarray*}
where $V$ is a unitary matrix, $U=V[1, \ldots, n|1, \ldots, k]$ and $O_{n-k}$ is the $(n-k)\times (n-k)$ all zeros matrix. Let $U = (u_1, u_2, \dots , u_k)$ where $u_{i} \in \mathbb{R}^n$, $i=1, \ldots, k$. Each $u_i$ is a column of $V$, so they are orthonormal. Moreover, $A= u_1u_1^T+u_2u_2^T+\dots +u_ku_k^T$. 

Conversely, assume there is a matrix $A= u_1u_1^T+u_2u_2^T+\dots +u_ku_k^T$, where $\{u_1, u_2, \dots, u_k\}$ is a set of orthonormal vectors in $\mathbb{R}^n$. Then the spectrum for $A$ is $\left \{   0^{(n-k)}, 1^{(k)}\right \}$, so $G \in MP([n-k,k])$.
\end{proof}


We summarize the known characterizations of graphs with given minimal multiplicity bipartitions in the following lemma.  

 \begin{lemma}\label{lem:cupvee}
Let $G$ be a connected graph on $n$ vertices. Then
\begin{enumerate}
    \item $MB(G) = 1$ if and only if $G$ is the complete graph, $K_n$.

    \item  $MB(G) = 2$ if and only if 
\[
G = (K_{p_1} \cup K_{q_1}) \vee  (K_{p_2} \cup K_{q_2})\vee \dots \vee (K_{p_k} \cup K_{q_k}),
\]
for non-negative integers $p_1,\dots,p_k,q_1,\dots,q_k$ with $k>1$, and $G$ is not isomorphic to either one of a complete graph or $(K_{p_1} \cup K_{q_1}) \vee  K_{1}$.  
     \item If $MB(G) = k$, then $G$ does not have an independent set (a set of vertices for which no two are adjacent) of size $k+1$ or more.\label{lem:cupvee:coclique}
\end{enumerate}
 \end{lemma}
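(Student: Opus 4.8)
The plan is to handle the three parts separately: parts (1) and (2) I would reduce to the characterizations of $MP([n-1,1])$ and $MP([n-2,2])$ recalled in the introduction, while part (3) I would prove directly from the projection form given by Lemma~\ref{lem:vectorconst}. For (1), the introduction records that $MP([n-1,1])$ is exactly the family whose components are one complete graph together with isolated vertices; intersecting this with connectivity forces $G=K_n$. Conversely $J_n\in\mathcal{S}(K_n)$ has spectrum $\{n^{(1)},0^{(n-1)}\}$, so $K_n\in MP([n-1,1])$, and since $1$ is the least admissible value of $k$ this gives $MB(K_n)=1$.

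For (2) I would invoke the known classification of connected graphs achieving $[n-2,2]$ from \cite{BHL, CGMJ, MS, OS1}: up to the two listed exceptions these are exactly the displayed joins of two-clique unions. The two exceptions have different origins. The complete graph $K_n$ does achieve $[n-2,2]$ but, being in $MP([n-1,1])$ by part (1), has $MB=1$ rather than $2$, so it is removed. The family $(K_{p_1}\cup K_{q_1})\vee K_1$ is the degenerate case that fails to achieve $[n-2,2]$ altogether; for instance $(K_1\cup K_1)\vee K_1=P_3$, which has three distinct eigenvalues. Reconciling the classification with the condition $G\notin MP([n-1,1])$ built into $MB(G)=2$ is then routine bookkeeping on the cited characterization rather than new content.

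Part (3) carries the substance. Since $MB(G)=k$ gives $G\in MP([n-k,k])$, Lemma~\ref{lem:vectorconst} supplies $A=UU^{T}\in\mathcal{S}(G)$ with $U=(u_1,\dots,u_k)$ having orthonormal columns, so $A=P$ is a rank-$k$ orthogonal projection. Suppose toward a contradiction that $G$ has an independent set $S$ with $|S|=k+1$. For distinct $i,j\in S$ non-adjacency gives $A_{ij}=0$, and since $A_{ij}$ equals the inner product of rows $i$ and $j$ of $U$, the $k+1$ rows of $U$ indexed by $S$ are pairwise orthogonal vectors in $\mathbb{R}^{k}$. Discarding any zero rows, pairwise-orthogonal vectors are linearly independent, so among $k+1$ of them in a $k$-dimensional space at least one row, say row $i_0$, must be the zero vector; then column $i_0$ of $P=UU^{T}$ is $U$ applied to that zero row, whence $P_{i_0 j}=0$ for every $j$. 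Because $G$ is connected on $n\ge 2$ vertices, $i_0$ has a neighbour $j_0$, and $A=P\in\mathcal{S}(G)$ forces $P_{i_0 j_0}\neq 0$, contradicting the vanishing of column $i_0$. Thus no independent set of size $k+1$ (and hence none of larger size) exists.

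The main obstacle I anticipate is resisting the weaker and insufficient conclusion that $P_{i_0 i_0}=0$: because the diagonal of matrices in $\mathcal{S}(G)$ is free, a vanishing diagonal entry is harmless and yields no contradiction. The argument only bites once one upgrades ``row $i_0$ of $U$ vanishes'' to ``the whole column $i_0$ of $P$ vanishes,'' which manufactures a forbidden zero \emph{off-diagonal} entry at an edge incident to $i_0$; connectivity is precisely what guarantees such an edge exists, and it is the one hypothesis doing real work in part (3).
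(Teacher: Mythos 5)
Your proposal is correct and follows essentially the same route as the paper: parts (1) and (2) are dispatched by citing the known characterizations of $MP([n-1,1])$ and $MP([n-2,2])$, and part (3) uses Lemma~\ref{lem:vectorconst} to write $A=UU^{T}$ and derives a contradiction from $k+1$ pairwise orthogonal rows of $U$ in $\mathbb{R}^{k}$. In fact your handling of part (3) is slightly more careful than the paper's, which simply asserts that $k+1$ orthogonal vectors in $\mathbb{R}^{k}$ is impossible; you correctly observe that this needs the rows to be nonzero and close that gap by showing a zero row would force a zero off-diagonal column entry at an edge guaranteed by connectivity.
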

 
 \begin{proof}
The first statement is trivial. The second statement has appeared in~\cite{BHL, CGMJ, MS,OS1}.

The third statement is known (for example, there is a proof in~\cite{MS}), but we include a proof for completeness. From Lemma~\ref{lem:vectorconst} there is a matrix $A \in \mathcal{S}(G)$ with
\[
A= u_1u_1^T+u_2u_2^T+\dots +u_ku_k^T
\]
where $u_i$, $i=1,2,\dots,k$, are orthonormal vectors. Let $U = [u_1, u_2, \dots , u_k]$ where $u_{i} \in \mathbb{R}^n$, $i=1, \ldots, k$. If $G$ has an independent set of size $k+1$, then the rows of $U$ form $k+1$ orthogonal vectors in $\mathbb{R}^k$, which is impossible. Hence there is no independent set of size $k+1$.
\end{proof}

The following lemma indicates that for a connected graph $G$ with $q(G)=2$, if the union of the pairwise common neighbourhood of an independent set of vertices is not empty, then it cannot be too small. 

\begin{lemma}[Theorem 4.4, \cite{AACFMN}]\label{pairwisecommonneighbour}
Consider a connected graph $G$ with $q(G)=2$, and let $S$ be an independent set of vertices. If $\displaystyle \cup_{v_i,v_j\in S}(N(v_i)\cap N(v_j))$ is not empty, then \[|S| \leq |\cup_{i,j\in S}(N(v_i)\cap N(v_j))|.\]
\end{lemma}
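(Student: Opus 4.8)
The plan is to use Lemma~\ref{lem:vectorconst} to pass to a projection matrix and then extract, from the rows indexed by $S$, a family of orthogonal vectors whose supports are the neighbourhoods of the vertices of $S$. Since $q(G)=2$, I would first normalize the two eigenvalues to $0$ and $1$, so that there is $A\in\mathcal{S}(G)$ with $A=UU^{T}$, $U^{T}U=I_k$; equivalently $A$ is a symmetric idempotent, $A^{2}=A$. Write $S=\{v_1,\dots,v_t\}$ with $t=|S|$. Because $S$ is independent, the principal submatrix $A[S\,|\,S]$ is the diagonal matrix $D=\operatorname{diag}(A_{v_1v_1},\dots,A_{v_tv_t})$. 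Let $B=A[S\,|\,S^{c}]$, and let $x_a$ denote its $a$-th row, which records the weights $A_{v_a,w}$ for $w\notin S$ and is therefore supported exactly on $N(v_a)$.

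The engine of the argument is the identity obtained by reading off the $(S,S)$-block of $A^{2}=A$, namely
\[
BB^{T}=D-D^{2}.
\]
For $a\neq b$ the entry $(BB^{T})_{ab}=\sum_{w\in S^{c}}A_{v_a,w}A_{v_b,w}$ collects precisely the contributions of the common neighbours of $v_a$ and $v_b$, and this equals the off-diagonal entry $(D-D^{2})_{ab}=0$; hence $x_1,\dots,x_t$ are pairwise orthogonal. For the diagonal, $(BB^{T})_{aa}=\sum_{w\in N(v_a)}A_{v_a,w}^{2}=A_{v_av_a}(1-A_{v_av_a})$. Since $G$ is connected on at least two vertices, $v_a$ has a neighbour, so this sum of squares is strictly positive; in particular each $x_a$ is nonzero (and $A_{v_av_a}\notin\{0,1\}$).

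Next I would restrict each $x_a$ to the coordinates indexed by $N:=\bigcup_{i,j}(N(v_i)\cap N(v_j))$. The crucial observation is that for $a\neq b$ only coordinates $w\in N(v_a)\cap N(v_b)\subseteq N$ make a nonzero contribution to $\langle x_a,x_b\rangle$, so the coordinates outside $N$ that we delete all contributed $0$; consequently every inner product $\langle x_a,x_b\rangle$ with $a\neq b$ is unchanged by the restriction. Thus the restricted vectors $\pi_N x_1,\dots,\pi_N x_t$ remain pairwise orthogonal inside the $|N|$-dimensional coordinate space indexed by $N$. Provided each $\pi_N x_a$ is nonzero, they are $t$ nonzero pairwise orthogonal vectors in a space of dimension $|N|$, which forces $t\le |N|$, exactly the desired bound.

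The main obstacle is precisely this nonvanishing: $\pi_N x_a\neq 0$ says that $v_a$ has a neighbour lying in $N$, i.e.\ a neighbour it shares with some other vertex of $S$. A vertex of $S$ all of whose neighbours are \emph{private} (adjacent to no other vertex of $S$) would yield $\pi_N x_a=0$ and break the count, so the heart of the proof is to show this cannot happen once $N\neq\varnothing$. This is where the hypothesis that $N$ is nonempty, connectivity, and the full force of $q(G)=2$ must enter: such a privately-neighboured $v_a$ would lie at distance at least $3$ from the rest of $S$, and I would rule this out using the relations $(A^{m})_{v_a,v_b}=0$ forced by the degree-two minimal polynomial of $A$ (equivalently, the absence of unique shortest paths) together with the cancellation conditions $\sum_{w}A_{v_a,w}A_{w,v_b}=0$. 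Establishing this no-private-neighbour statement cleanly is the delicate step; by comparison the orthogonal-projection bookkeeping above is routine.
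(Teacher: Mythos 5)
Your setup and bookkeeping are correct and follow the standard route for this result (note the paper does not prove this lemma itself; it only cites Theorem~4.4 of \cite{AACFMN}): normalize so that $A^2=A$, read off the $(S,S)$ block of $A^2=A$ to get $BB^T=D-D^2$, conclude that the rows $x_a$ are pairwise orthogonal with every cross inner product supported inside $N=\bigcup_{i\neq j}(N(v_i)\cap N(v_j))$, and hence that the restrictions $\pi_N x_a$ stay pairwise orthogonal in $\mathbb{R}^{|N|}$. However, the gap you flag at the end is not a removable technicality, and the repair you sketch cannot succeed. The vector $\pi_N x_a$ is nonzero exactly when $v_a$ shares a neighbour with some other vertex of $S$, and the hypothesis $N\neq\varnothing$ does not force this for every $a$: a vertex of $S$ all of whose neighbours are private is perfectly compatible with connectivity and $q(G)=2$. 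Concretely, take $G=Q_6$ (which has $q(Q_6)=2$ via the recursion $A_{n+1}=\left[\begin{smallmatrix}A_n & I\\ I & -A_n\end{smallmatrix}\right]$ with $A_{n+1}^2=(n+1)I$, and which this paper itself uses as an example) and $S=\{000000,\,000011,\,111100,\,101010\}$. This set is independent, the only pair at Hamming distance $2$ is the first two vertices, so $N=\{000001,000010\}$ is nonempty, while the last two vertices have no neighbour in $N$. Your argument then yields only $2\le 2$, and in fact $|S|=4>2=|N|$, so the statement in the form given here fails on this example; there is no ``no-private-neighbour'' claim to be proved, and the distance-$3$ obstruction you hoped to exploit simply does not exist when $q(G)=2$.

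What your argument does establish, completely and correctly, is that $|S'|\le |N|$, where $S'$ is the set of vertices of $S$ having a common neighbour with at least one other vertex of $S$. In particular, under the stronger hypothesis that \emph{every} pair of vertices of $S$ has a common neighbour (so $S'=S$), your proof is already finished: then $N(v_a)\cap N\supseteq N(v_a)\cap N(v_b)\neq\varnothing$ for any $b\neq a$, so each $\pi_N x_a$ is nonzero and the orthogonality count gives $|S|\le|N|$. That pairwise hypothesis is satisfied in every application of the lemma made in this paper (the join $Q_6\vee P_2$, where $V(P_2)$ is a common neighbourhood of every pair; the isolated vertices joined to cliques; strongly regular graphs with $c=1$). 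So the right conclusion is not to hunt for the missing step but to record the corrected hypothesis (or the conclusion with $|S'|$ in place of $|S|$); as posed, the step you identify as delicate is unfillable.
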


For a given graph $G$, the minimum rank among all matrices (positive semidefinite matrices) in $\mathcal{S}(G)$ is denoted by $\mr(G)$ ($\mrr+(G)$). If a graph $G$ on $n$ vertices has $q(G)=2$ and $MB(G)=k$, from Lemma~\ref{lem:vectorconst} then there is a matrix $A \in \mathcal{S}(G)$ with spectrum $\{0^{(n-k)},1^{(k)}\}$. This implies the following lemma.

\begin{lemma}\label{lem:mmr+}
If $G$ is any graph with $q(G) = 2$, then $\mr(G) \leq \mrr+(G) \leq MB(G)$.
\end{lemma}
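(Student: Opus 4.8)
The plan is to handle the two inequalities separately, since the left one is a completely general comparison between minima taken over nested sets of matrices, while the right one is an immediate consequence of Lemma~\ref{lem:vectorconst}. In fact neither inequality requires any work beyond unpacking definitions, so I expect this to be a short argument with no genuine obstacle; the only point deserving a moment of care is confirming that the matrix produced by Lemma~\ref{lem:vectorconst} really is positive semidefinite and has rank exactly $MB(G)$.

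For the inequality $\mr(G) \leq \mrr+(G)$, I would simply observe that every positive semidefinite matrix in $\mathcal{S}(G)$ is in particular a matrix in $\mathcal{S}(G)$. Hence the collection of positive semidefinite members of $\mathcal{S}(G)$ is a subset of all of $\mathcal{S}(G)$, and minimizing the rank over a smaller collection can only produce a value that is at least as large. This yields $\mr(G) \leq \mrr+(G)$, and notably this step does not use the hypothesis $q(G)=2$ at all.

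For the inequality $\mrr+(G) \leq MB(G)$, set $k = MB(G)$, so that $G \in MP([n-k,k])$. Since $q(G)=2$, Lemma~\ref{lem:vectorconst} supplies a matrix $A \in \mathcal{S}(G)$ of the form $A = u_1u_1^T + \dots + u_k u_k^T$ with $\{u_1,\dots,u_k\}$ orthonormal. I would then check that this particular $A$ is positive semidefinite of rank exactly $k$: it is a sum of rank-one positive semidefinite matrices, hence positive semidefinite, and because the $u_i$ are orthonormal its spectrum is $\{0^{(n-k)}, 1^{(k)}\}$, so $\rk(A) = k$. Exhibiting this single positive semidefinite matrix of rank $k$ in $\mathcal{S}(G)$ bounds the minimum positive semidefinite rank from above, giving $\mrr+(G) \leq \rk(A) = k = MB(G)$. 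Chaining the two inequalities then produces $\mr(G) \leq \mrr+(G) \leq MB(G)$, as claimed.
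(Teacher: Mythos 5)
Your argument is correct and matches the paper's own justification, which likewise derives the bound $\mrr+(G)\le MB(G)$ from the rank-$k$ positive semidefinite matrix with spectrum $\{0^{(n-k)},1^{(k)}\}$ supplied by Lemma~\ref{lem:vectorconst}, with the left inequality being the standard subset-of-matrices observation. No gaps.
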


The parameter $\mr(G)$ has been extensively studied~\cite{FH}; and any lower bound on the minimum rank or the minimum positive semidefinite rank of a graph, is also a lower bound on the minimal multiplicity bipartition of the graph. For example, we may use the above bound on minimum rank together with \cite[Obs. 1.6]{FH} to deduce the next result.

\begin{lemma}\label{cor:easylowerbounds}
If $G$ is any graph with $MB(G) = k$, then any induced path of $G$ has length no more than $k$.
\end{lemma}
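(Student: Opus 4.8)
The plan is to chain together two inequalities. Lemma~\ref{lem:mmr+} already supplies the upper bound $\mr(G) \le MB(G) = k$, so it suffices to establish that the length of every induced path of $G$ is a lower bound for $\mr(G)$; this is precisely the content of \cite[Obs.~1.6]{FH}. Once both bounds are in hand, the chain $(\text{length of induced path}) \le \mr(G) \le k$ immediately yields the claim.

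For completeness I would justify the induced-path lower bound directly as follows. Suppose $G$ contains an induced path $P$ on vertices $v_0, v_1, \dots, v_m$, so that $P$ has length $m$. Fix an arbitrary $A \in \mathcal{S}(G)$ and set $\alpha = \{v_0, \dots, v_m\}$. Because $P$ is an \emph{induced} path, the only adjacencies among these vertices are the consecutive ones, so the principal submatrix $A[\alpha\,|\,\alpha]$ belongs to $\mathcal{S}(P_{m+1})$: it is an irreducible symmetric tridiagonal (Jacobi) matrix whose first sub- and super-diagonal entries are all nonzero. The key fact is that such a Jacobi matrix of order $m+1$ has $m+1$ distinct eigenvalues, hence at most one eigenvalue equal to zero, so $\rk(A[\alpha\,|\,\alpha]) \ge m$. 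Since the rank of a submatrix can never exceed the rank of the full matrix, $\rk(A) \ge \rk(A[\alpha\,|\,\alpha]) \ge m$. As $A$ was arbitrary, this gives $\mr(G) \ge m$, and combining with $\mr(G) \le k$ forces $m \le k$.

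The only subtle point, and the reason one cannot argue more crudely, is that the minimum rank is not monotone under passage to induced subgraphs, so one may not simply write $\mr(P_{m+1}) = m \le \mr(G)$. The argument must instead route through the principal submatrix $A[\alpha\,|\,\alpha]$, exploiting that its rank is bounded \emph{above} by $\rk(A)$ while being bounded \emph{below} by $m$ via the distinct-eigenvalue property of Jacobi matrices. Since the induced-path bound on $\mr(G)$ is already available as \cite[Obs.~1.6]{FH}, the main work reduces to correctly aligning the two inequalities, and no genuine obstacle remains.
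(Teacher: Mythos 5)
Your proof is correct and follows essentially the same route as the paper: it chains the bound $\mr(G)\le MB(G)$ from Lemma~\ref{lem:mmr+} with the fact that the length of any induced path is a lower bound for $\mr(G)$ (the content of \cite[Obs.~1.6]{FH}), and your direct justification of the latter via the distinct-eigenvalue property of irreducible tridiagonal matrices is sound. One correction, however: your closing remark that ``minimum rank is not monotone under passage to induced subgraphs'' is false --- minimum rank \emph{is} monotone under induced subgraphs, by exactly the submatrix argument you give (take $A\in\mathcal{S}(G)$ of minimum rank; then $A[\alpha\,|\,\alpha]\in\mathcal{S}(G[\alpha])$ and $\rk(A[\alpha\,|\,\alpha])\le\rk(A)$), so the ``crude'' inequality $\mr(P_{m+1})=m\le\mr(G)$ is perfectly legitimate and is essentially what the cited observation records. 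This slip is only in the commentary and does not affect the validity of your argument.
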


\section{Complete Multipartite Graphs}

As is standard, we use the notation $K_{p_1, p_2, \dots, p_\ell}$ for the complete $\ell$-partite graph, where $\ell$ is a positive integer and $p_1 \geq p_2 \geq \cdots \geq p_\ell$. The set of vertices is partitioned into $\ell$ disjoint parts $V_1\cup V_2\cup \cdots \cup V_{\ell}$; part $V_i$ has $p_i$ vertices for $i\in \{1, \dots, \ell\}$; no two vertices from a part are adjacent, while any two vertices from different parts are adjacent.

In this section we verify that the value of $q(K_{p_1, p_2, \dots, p_\ell})$ is either 2 or 3 assuming the graph is nonempty. One case remains unresolved namely when $p_1 \leq p_2 +\cdots + p_{\ell}$.
We also provide an upper bound for $MB(K_{p_1, p_2, \dots, p_\ell})$ in the case of $q(K_{p_1, p_2, \dots, p_\ell})=2$. In fact, when $\ell =2$ or $p_1=p_2=\cdots =p_\ell$ we show that $MB(K_{p_1, p_2, \dots, p_\ell}) = p_1$. The question of which other multiplicity partitions can be achieved by a complete multipartite graph remains open.

In an unpublished manuscript (see \cite{BFHHLS2}), it is shown that any complete multipartite graph $K_{p_1, p_2, \dots, p_\ell}$ satisfies $q(K_{p_1, p_2, \dots, p_\ell}) \leq  3$. The basic idea employed in the proof of this inequality is to note that the matrix $B=[b_{u,v}]$ with entries defined as 
\[
b_{u,v}=
\begin{cases}
0 & \text{if $u,v\in V_i$},\\
\displaystyle\frac{1}{\sqrt{p_ip_j}} & \text{if $u\in V_i$, $v\in V_j$, and $i\neq j$,}
\end{cases}
\]
satisfies $B\in \mathcal{S}(K_{p_1, p_2, \dots, p_\ell})$ and $q(B)=3$. Furthermore, it can be easily verified that the eigenvalues of $B$ are $\{ -1, 0, \ell-1\}$ with multiplicities $\ell-1, \sum_{i=1}^\ell (p_i -1), 1$, respectively. Thus if follows that $K_{p_1, p_2, \dots, p_\ell} \in MP([\;\sum_{i=1}^\ell (p_i -1), \ell-1,1])$.

There are several known results for the partitions that are achievable along with the corresponding values of $q$ for complete multipartite graphs; we list them in the following lemma. 

\begin{lemma}\label{lem:achievablePs}
For positive integers $\ell, p_i, q_i$ with $i=1,2,\ldots,\ell$

\begin{enumerate}
 \item $q(K_{p_1, p_2, \dots, p_\ell}) \in \{2, 3\}$. 
    \item $q(K_{p_1,q_1}) = \begin{cases}
                                         2 &\textrm{if $p_1=q_1$}, \\
                                         3 &\textrm{otherwise.} \\
                                      \end{cases}$
    \item If $p_1 + p_2+ \cdots + p_\ell = q_1+q_2+ \cdots +q_{\ell'}$ for $\ell, \ell' \geq 2$, then 
    \[
    q(K_{p_1,p_2, \dots, p_\ell,q_1,q_2, \dots, q_{\ell'}}) = 2.
    \]
    \item If $p_2 +\cdots + p_{\ell} < p_1$, then $q(K_{p_1, p_2, \dots, p_\ell}) = 3$. \label{complete3}
  \end{enumerate}
\end{lemma}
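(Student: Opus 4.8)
The plan is to deduce all four parts from results already in the excerpt, with a single short construction needed to cover one boundary case; throughout I assume the graph is nonempty, so $\ell\geq 2$ and at least two parts are occupied.

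For part (1), the lower bound is immediate: a nonempty complete multipartite graph has an edge, and $q(G)=1$ only for edgeless graphs, so $q\geq 2$. The upper bound is supplied by the matrix $B$ displayed just before the lemma, which lies in $\mathcal{S}(K_{p_1,\dots,p_\ell})$ and has at most three distinct eigenvalues; hence $q\leq 3$ and $q\in\{2,3\}$. Consequently parts (2), (3) and (4) only need to decide between $2$ and $3$.

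The two statements asserting $q=3$ --- the ``otherwise'' case of part (2) and all of part (4) --- I would handle by one and the same independent-set argument. Suppose for contradiction that $q=2$, and put $k=MB(G)$, so $k\leq n/2$. The largest part $V_1$ is an independent set of size $p_1$ (for part (2), relabel so that $p_1>q_1$), so the third part of Lemma~\ref{lem:cupvee} forces $p_1\leq k\leq n/2$. But each hypothesis is exactly the statement that the largest part exceeds half the vertices: $q_1<p_1$ is equivalent to $p_1>\tfrac{n}{2}$, and $p_2+\dots+p_\ell<p_1$ is equivalent to $p_1>\tfrac{n}{2}$. This contradicts $p_1\leq n/2$, so $q=3$.

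For part (3), the key step is the structural observation that, writing $a=p_1+\dots+p_\ell=q_1+\dots+q_{\ell'}$, the graph $K_{p_1,\dots,p_\ell,q_1,\dots,q_{\ell'}}$ equals the join $K_{p_1,\dots,p_\ell}\vee K_{q_1,\dots,q_{\ell'}}$: two vertices are adjacent precisely when they lie in different parts, and this is exactly the union of the edges inside each factor together with all edges across the two factors. Both factors are complete multipartite with at least two parts, hence connected, and each has $a$ vertices, so Theorem~\ref{thm:join} yields $q=2$. Finally, the balanced case $p_1=q_1$ of part (2), namely $K_{p,p}$, is the one place where this join argument fails (since $K_{p,p}=\overline{K_p}\vee\overline{K_p}$ and $\overline{K_p}$ is disconnected for $p\geq 2$), so here I would argue directly: let $A=\begin{pmatrix}0&Q\\Q^{T}&0\end{pmatrix}$, where $Q$ is a $p\times p$ real orthogonal matrix with every entry nonzero. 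Then $A\in\mathcal{S}(K_{p,p})$, while $A^{2}=I_{2p}$ and $\tr A=0$ force $\sigma(A)=\{(-1)^{(p)},1^{(p)}\}$, giving $q=2$. I expect the only real obstacle to be confirming the existence of such a $Q$ for every $p$; this is standard (for $p\geq 2$ take a Householder reflector $I-2ww^{T}$ with $w$ a unit vector whose entries are all nonzero and none equal to $\pm 1/\sqrt{2}$, and for $p=1$ take $[1]$), after which the remaining verifications are routine.
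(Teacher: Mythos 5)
Your proposal is correct, and for parts (1), (3) and (4) it follows essentially the paper's route: the displayed matrix $B$ gives $q\leq 3$, the join decomposition plus Theorem~\ref{thm:join} gives part (3), and the independent-set bound from Lemma~\ref{lem:cupvee}(3) combined with $MB(G)\leq n/2$ gives the contradiction $p_1\leq n/2$ for part (4), exactly as in the paper. The genuine difference is part (2), which the paper dispatches by citing Cor.~6.5 of \cite{AACFMN}: you instead make it self-contained, observing that the unbalanced case $p_1\neq q_1$ is just the $\ell=2$ instance of the part-(4) argument (since $p_1>q_1$ is equivalent to $p_1>n/2$), and handling the balanced case $K_{p,p}$ with the explicit matrix $\left[\begin{smallmatrix}0&Q\\Q^T&0\end{smallmatrix}\right]$ for a nowhere-zero orthogonal $Q$ built from a Householder reflector $I-2ww^T$ (all entries of $w$ nonzero and none equal to $\pm 1/\sqrt{2}$), which indeed forces $\sigma(A)=\{(-1)^{(p)},1^{(p)}\}$ via $A^2=I$ and $\tr A=0$. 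This buys a proof that does not lean on the external reference and also correctly flags why Theorem~\ref{thm:join} cannot be used for $K_{p,p}$ (the factors $\overline{K_p}$ are disconnected); the cost is only the small extra verification of the Householder construction, which is routine. I see no gaps.
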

\begin{proof}
The first statement is from~\cite{AACFMN}. The second statement follows from \cite[Cor. 6.5]{AACFMN}.

To observe that the third statement holds, note that $p_1 + p_2+ \cdots + p_\ell = q_1+q_2+ \cdots +q_{\ell'}$ for $\ell, \ell' \geq 2$ implies that $K_{p_1,p_2, \dots, p_\ell,q_1,q_2, \dots, q_{\ell'}}$ is isomorphic to the join of $K_{p_1,p_2, \dots, p_\ell}$ and $K_{q_1,q_2, \dots, q_{\ell'}}$, and hence satisfies $q(K_{p_1,p_2, \dots, p_\ell,q_1,q_2, \dots, q_{\ell'}}) = 2$ by Theorem 2.1.

To see that the last statement holds, assume that $p_2+ \cdots + p_{\ell} < p_1$, and set $n= p_1 + p_2+ \dots + p_\ell$; this implies $n - p_1 < p_1$. If $q(K_{p_1, p_2, \dots, p_\ell}) = 2$, then $K_{p_1, p_2, \dots, p_\ell} \in MP([n-k,k])$ with $p_1 \leq k \leq n-k$ (by Statement (3) of Lemma 2.3). Hence
$p_1 \leq k \leq n-k \leq n - p_1$, implying  $2p_1 \leq n$, which is a contradiction. Hence
$q(K_{p_1, p_2, \dots, p_\ell}) \geq 3$. Finally, the equality follows from the work in the unpublished manuscript~\cite{BFHHLS2}.
\end{proof}

Continuing with the complete multipartite graph, the only case that remains unresolved is when $p_1\leq p_2+\cdots+p_{\ell}$. We suspect that in this case $q(K_{p_1, p_2, \dots, p_\ell})=2$. 

To address the specific case when all $p_i=k$, we begin by stating a technical result that is a special case of \cite[Lemma 10]{BHH} (in the notation of \cite{BHH},  we are setting $q=0$ and $p=k\geq 2$).

\begin{lemma}\label{lem:Barrett}
Let $k \geq 2$, and $M_1$ and $M_2$ be matrices that have $k$ rows and no zero columns. Then there exists a $k \times k$ matrix $R$ such that $R^T R = I_k$ and $M_{1}^{T} R M_2$ has no zero entries.
\end{lemma}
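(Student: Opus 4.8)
The plan is to pass from entrywise nonvanishing of $M_1^T R M_2$ to a genericity statement about the orthogonal group, and then to invoke the fact that a finite union of measure-zero sets cannot exhaust a positive-measure set. Write $a_1,\dots,a_s$ for the columns of $M_1$ and $b_1,\dots,b_t$ for the columns of $M_2$; by the no-zero-column hypothesis each is a nonzero vector in $\mathbb{R}^k$. Since the constraint $R^TR=I_k$ forces $R$ to be orthogonal, the $(i,j)$ entry of $M_1^TRM_2$ is exactly $a_i^T R b_j$, so it suffices to produce an orthogonal $R$ with $a_i^T R b_j \neq 0$ for every pair $(i,j)$.

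For a fixed nonzero pair $a,b\in\mathbb{R}^k$, I would consider the function $f_{a,b}(R)=a^TRb$ on the special orthogonal group $SO(k)$. The first step is to check that $f_{a,b}$ is not identically zero: because $k\ge 2$, $SO(k)$ acts transitively on the sphere of radius $\|b\|$, so some $R\in SO(k)$ sends $b$ to a positive multiple of $a$, giving $a^TRb=\|a\|\,\|b\|\neq 0$. This is the one place where the hypothesis $k\ge 2$ is genuinely used.

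Next I would invoke the standard fact that a real-analytic function which is not identically zero on a connected real-analytic manifold has a zero set of measure zero. Since $SO(k)$ is a connected compact analytic manifold and $f_{a,b}$ is the restriction of a polynomial in the entries of $R$, the set $Z_{a,b}=\{R\in SO(k):a^TRb=0\}$ has Haar measure zero. The ``bad'' set $\bigcup_{i,j}Z_{a_i,b_j}$ is then a finite union of measure-zero sets, so its complement in $SO(k)$ has positive measure and is in particular nonempty; any $R$ in this complement is orthogonal and makes every entry $a_i^TRb_j$ nonzero, which is the required matrix. (Equivalently, drawing $R$ from Haar measure on $SO(k)$, a union bound gives probability $0$ that any entry vanishes.)

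The main obstacle is entirely concentrated in the non-degeneracy step: once one knows that each single $f_{a_i,b_j}$ is not identically zero on $SO(k)$, the measure-zero conclusion and the finite union bound are routine. If a more self-contained argument were preferred over the manifold measure theory, one could instead fix a one-parameter subgroup $t\mapsto \exp(tX)$ with $X$ skew-symmetric and reduce to showing that each $t\mapsto a_i^T\exp(tX)b_j$ is a nonconstant real-analytic function of the single variable $t$, whose zeros are then isolated, so that avoiding the resulting countable set of bad $t$-values yields $R=\exp(t_0X)$. The delicate point in that variant is selecting a single $X$ that is non-degenerate against all finitely many pairs at once, which is why I would favour the direct genericity argument on $SO(k)$.
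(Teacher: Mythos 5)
Your argument is correct, but note that the paper does not actually prove this lemma: it is presented as ``a special case of \cite[Lemma 10]{BHH}'' (with $q=0$ and $p=k$ in that paper's notation), so there is no in-paper proof to match against. The cited result of Barrett, Hall and van der Holst is a more general statement about the indefinite orthogonal group $O(p,q)$, needed there for inertia computations, and is established by a genericity argument in the same spirit as yours; your write-up specializes this to the definite case and makes it self-contained. The three ingredients are all deployed correctly: transitivity of $SO(k)$ on spheres for $k\ge 2$ shows each $f_{a_i,b_j}(R)=a_i^TRb_j$ is not identically zero; a non-trivial real-analytic (indeed, linear-in-$R$, hence polynomial) function on the \emph{connected} group $SO(k)$ has a Haar-null zero set; and a finite union of null sets cannot cover $SO(k)$. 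Your choice to work in $SO(k)$ rather than the disconnected $O(k)$ is exactly what makes the identity-theorem step legitimate, and any $R$ in the complement of the bad set satisfies $R^TR=I_k$ as required. The only cosmetic remark is that the conclusion ``positive measure, in particular nonempty'' could equally be phrased as ``the good set is open and dense'' (each $Z_{a_i,b_j}$ is closed with empty interior), which avoids invoking Haar measure at all; either way the argument is complete and fills in a proof the paper delegates to an unpublished-adjacent reference.
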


\begin{lemma}\label{qi's}
For any graph $G$ with no isolated vertices and for any number $d$ such that  $\mrr+ (G)\leq d\leq |V(G)|$, there exist vectors $q_1,\ldots, q_d$ such that $\sum_{i} q_iq_i^T\in \mathcal{S}(G)$ and each $q_i, i=1,\ldots, d$, is entry-wise nonzero.  
\end{lemma}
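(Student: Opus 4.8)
The plan is to start from an optimal positive semidefinite realization of $G$, enlarge its rank-factor by zero columns so that it has exactly $d$ columns, and then apply the orthogonal transformation supplied by Lemma~\ref{lem:Barrett} to clear every zero entry without changing the realized matrix.

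First I would set $r = \mrr+(G)$ and fix a positive semidefinite $A \in \mathcal{S}(G)$ with $\rk A = r$, together with a factorization $A = B_0 B_0^T$, where $B_0$ is $n \times r$. The key preliminary observation is that $B_0$ has no zero rows: because $G$ has no isolated vertices, each vertex $u$ has some neighbour $w$, and then the $(u,w)$ entry $A_{uw}$, which equals the inner product of rows $u$ and $w$ of $B_0$, is nonzero, forcing row $u$ of $B_0$ to be nonzero.

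Assuming for the moment $d \geq 2$, I would append $d - r$ zero columns to $B_0$ to obtain an $n \times d$ matrix $B$. Padding with zeros neither changes the product ($B B^T = B_0 B_0^T = A$) nor creates a zero row, so $B$ still has no zero rows. I would then invoke Lemma~\ref{lem:Barrett} with $k = d$, $M_1 = B^T$, and $M_2 = I_d$: both have $d$ rows, $M_1 = B^T$ has no zero columns precisely because $B$ has no zero rows, and $M_2 = I_d$ obviously has no zero columns. The lemma yields an orthogonal $R$ (so $R^T R = I_d$) for which $M_1^T R M_2 = B R$ has no zero entries. Setting $q_i$ to be the $i$th column of $B R$ finishes the construction: each $q_i$ is entry-wise nonzero, while orthogonality of $R$ gives $\sum_i q_i q_i^T = (B R)(B R)^T = B R R^T B^T = B B^T = A \in \mathcal{S}(G)$. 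The degenerate case $d = 1$ forces $r = 1$, hence (by the same row argument) $G = K_n$, and then the all-ones vector works since its outer product is $J_n \in \mathcal{S}(K_n)$.

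The genuinely nontrivial step---making every entry of all $d$ vectors nonzero simultaneously while preserving the adjacency pattern---is exactly what Lemma~\ref{lem:Barrett} accomplishes, so the main thing I would need to get right is the bookkeeping that verifies its hypotheses: that ``no isolated vertices'' translates precisely into ``$B$ has no zero rows,'' and that both the zero-padding and the orthogonal rotation leave the realized matrix $A$ unchanged. I would also note that the upper bound $d \leq |V(G)|$ is not actually used in this argument; only $r \leq d$ (to have room to pad) and $d \geq 1$ are needed.
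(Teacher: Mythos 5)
Your proposal is correct and rests on the same key idea as the paper's proof: factor a positive semidefinite realization of $G$ as $BB^T$ with an $n\times d$ factor $B$, observe that ``no isolated vertices'' forces $B$ to have no zero rows, and then apply Lemma~\ref{lem:Barrett} with $M_1=B^T$ and $M_2=I_d$ to rotate the columns into entrywise nonzero position without changing $BB^T$. The only real difference is how you manufacture the $d$-column factor: the paper chooses a positive semidefinite $A\in\mathcal{S}(G)$ of rank exactly $d$ (using the standard fact that every rank between $\mrr+(G)$ and $n$ is attained), so its resulting $q_1,\dots,q_d$ can be taken mutually orthogonal and $\sum_i q_iq_i^T$ has rank $d$; you instead pad a minimum-rank factor with zero columns, so your $q_i$ are generally linearly dependent and the sum has rank $\mrr+(G)$. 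Both arguments prove the statement as written, since it asks for nothing about the rank or independence of the $q_i$; your version is slightly more self-contained (and, as you note, does not use $d\le|V(G)|$), while the paper's version delivers the orthogonal-columns conclusion that is actually exploited when this construction is reused in Lemma~\ref{Gjoinclique}. Your separate treatment of $d=1$ (where Lemma~\ref{lem:Barrett} does not apply) is a correct and welcome piece of bookkeeping.
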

\begin{proof}
Since for the given parameter $d$ we have $\mrr+ (G) \leq d \leq |V|$, it follows that there is an $A \in \mathcal{S}_{+}(G)$---the set of positive semidefinite matrices in $\mathcal S(G)$---with rank$(A)=d$. Since $G$ has no isolated vertices, $A$ has no zero rows or columns. Since $A$ is positive semidefinite, $A$ can be written as $A =UU^T$, for some $n \times d$ matrix $U$. If $U = [ u_1, u_2, \ldots, u_d]$, where $u_1, u_2, \ldots, u_d$ are the columns of $U$, then we may assume that $u_1, u_2, \ldots, u_d$ are mutually orthogonal vectors in $\mathbb{R}^n$. 

Set $M_1=U^T$, and $M_2 = I_d$; each of these matrices have $d$ rows and do not have any zero columns (this follows since $A$ has no zero rows). Hence, by Lemma~\ref{lem:Barrett}, there exists an orthogonal $d \times d$ matrix $R$ such that $M_1^T RM_2 = UR$ has no zero entries. Let $Q=UR = [q_1, q_2,\ldots, q_d]$ then
\[
QQ^T =UR R^T U^T = UU^T = A \in \mathcal{S}(G).
\]
as needed.
\end{proof}

The next result is a technical result concerning a bound on the minimum semidefinite rank of joins of graphs. This result is needed to study the case when all $p_i=k$ for determining the minimum number of distinct eigenvalues of the complete multipartite graph and establish a bound on the corresponding minimal multiplicity bipartition. 
In this proof $\mathbf{1}_{s} $ is used to denote the vector in $\mathbb{R}^s$ with all entries equal to one. Similarly, $\mathbf{0}_{s}$ is used to denote the vector in $\mathbb{R}^s$ with all entries equal to zero and $O$ is the all zeros matrix, the size will be clear from context.

\begin{lemma}\label{Gjoinclique}
Consider the graph $G$ with no isolated vertices and positive integers $d, s_1, s_2, \ldots, s_d$. Assume $0<\mrr+ (G)\leq d\leq |V(G)|$. Suppose 
there exists $d$ mutually orthogonal vectors $z_1, z_2, \ldots, z_d$ such that if $Z=[z_1,z_2, \ldots, z_d]$, then $ZZ^T \in S(G)$ and $q(ZZ^T)=2$. Then $q(G\vee (K_{s_1}\cup K_{s_2}\cup\cdots \cup K_{s_d}))=2$; moreover, $MB(G\vee (K_{s_1}\cup K_{s_2}\cup\cdots \cup K_{s_d}))=d$.
\end{lemma}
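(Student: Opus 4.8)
The plan is to realize a matrix in $\mathcal{S}(G \vee H)$, where $H = K_{s_1}\cup\cdots\cup K_{s_d}$, through an orthonormal family and then invoke Lemma~\ref{lem:vectorconst}. Writing $n=|V(G)|$ and $N=n+s_1+\cdots+s_d$, I would index the coordinates of $\mathbb{R}^N$ by $V(G)$ first and then by the clique blocks $T_1,\dots,T_d$ with $|T_j|=s_j$. I would then look for orthonormal vectors $w_1,\dots,w_d\in\mathbb{R}^N$ of the special shape $w_i=(g_i;\,\mathbf 0;\dots;a_i;\dots;\mathbf 0)$, where $g_i\in\mathbb{R}^n$ sits in the $V(G)$-block and $a_i\in\mathbb{R}^{s_i}$ is placed in the block $T_i$ with zeros in all other clique blocks. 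The point of this ``one clique per vector'' placement is that in $A=\sum_i w_iw_i^T$ the off-diagonal clique blocks $A[T_i\,|\,T_j]$ with $i\neq j$ vanish automatically, while $A[T_j\,|\,T_j]=a_ja_j^T$, $A[V(G)\,|\,T_j]=g_ja_j^T$, and $A[V(G)\,|\,V(G)]=\sum_i g_ig_i^T$. Thus $A\in\mathcal{S}(G\vee H)$ precisely when $\sum_i g_ig_i^T\in\mathcal{S}(G)$ and each $g_j$ and each $a_j$ is entrywise nonzero (the latter forcing the clique $K_{s_j}$-pattern and the full cross block). Orthonormality reduces, via $w_i^Tw_{i'}=g_i^Tg_{i'}+\delta_{ii'}\|a_i\|^2$, to requiring the $g_i$ to be mutually orthogonal with $\|g_i\|^2<1$ and then choosing $a_i$ entrywise nonzero with $\|a_i\|^2=1-\|g_i\|^2$ (always possible since $s_i\geq 1$).

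This reduces everything to the following key step, which I expect to be the main obstacle: produce mutually orthogonal, entrywise nonzero vectors $g_1,\dots,g_d$ with $\sum_i g_ig_i^T\in\mathcal{S}(G)$ and all $\|g_i\|^2<1$. The naive idea of rotating $Z$ by an orthogonal matrix $R$ fails to preserve orthogonality of the columns unless $Z^TZ$ is a scalar matrix, so here I would use the hypothesis $q(ZZ^T)=2$ crucially: since $ZZ^T$ has rank $d$ and exactly two eigenspaces, the sum of the dimensions of its nonzero eigenspaces is exactly $d$. I would rotate within each nonzero eigenspace separately so as to preserve $ZZ^T$: taking an orthonormal basis $E$ of such an eigenspace and applying Lemma~\ref{lem:Barrett} to $M_1=E^T$, $M_2=I$ yields an orthogonal $R$ with $ER$ entrywise nonzero and still orthonormal. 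The hypothesis that $G$ has no isolated vertices guarantees that $E$ has no zero row: a zero row $p$ would mean $e_p$ is orthogonal to that eigenspace, hence---since there are only two eigenspaces---$e_p$ lies in the other one and is an eigenvector of $ZZ^T$, making column $p$ of $ZZ^T$ a multiple of $e_p$ and vertex $p$ isolated. Collecting the rotated bases over the (one or two) nonzero eigenspaces gives $d$ orthogonal entrywise nonzero eigenvectors whose associated rank-one sum is $ZZ^T\in\mathcal{S}(G)$; after rescaling $Z$ so that every eigenvalue is $<1$ (which preserves the zero pattern and hence membership in $\mathcal{S}(G)$) I would take these scaled eigenvectors as the $g_i$. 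When $d<n$ this simplifies considerably: $0$ is then an eigenvalue, so $ZZ^T$ has a single nonzero eigenvalue $c$ of multiplicity $d$, all $z_i$ share a common norm, and one may simply rotate a single orthonormal basis of the $d$-dimensional nonzero eigenspace, mirroring the mechanism in Lemma~\ref{qi's}.

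With the $g_i$ (hence the $a_i$, hence the $w_i$) in hand, $A=\sum_i w_iw_i^T$ has spectrum $\{0^{(N-d)},1^{(d)}\}$ by Lemma~\ref{lem:vectorconst}, so $G\vee H\in MP([N-d,d])$, giving $q(G\vee H)=2$ and $MB(G\vee H)\leq d$. For the matching lower bound I would note that $G\vee H$ is connected (being a join of nonempty graphs) and exhibit an independent set of size $d$ by choosing one vertex from each clique $T_1,\dots,T_d$; these are pairwise nonadjacent in $G\vee H$. By Lemma~\ref{lem:cupvee}(\ref{lem:cupvee:coclique}), a graph with $MB=k$ has no independent set of size $k+1$, so $MB(G\vee H)\geq d$. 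Combining the two bounds yields $MB(G\vee H)=d$, as claimed.
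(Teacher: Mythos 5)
Your proposal is correct and follows essentially the same route as the paper's proof: both build a rank-$d$ positive semidefinite matrix from $d$ mutually orthogonal vectors whose $G$-blocks are nowhere-zero orthogonal columns realizing a positive multiple of $ZZ^T$ and whose remaining support lies in a single clique $K_{s_i}$, and both obtain the lower bound $MB\geq d$ from the independent set formed by taking one vertex per clique, via Lemma~\ref{lem:cupvee}(\ref{lem:cupvee:coclique}). The only substantive difference is that you make the columns nowhere-zero by rotating within each eigenspace of $ZZ^T$ separately, a slightly more careful treatment of the step that the paper handles by reducing (using $q(ZZ^T)=2$ and ``up to translation'') to $Z^TZ=\lambda I_d$ and applying a single rotation.
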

\proof Let $H=G\vee (K_{s_1}\cup K_{s_2}\cup\cdots \cup K_{s_d} )$, and $s=\sum_{i=1}^d s_i$.  We construct a matrix $B\in \mathcal{S}(H)$ with $\sigma(B)=\{0,\beta\}$, where the eigenvalue $0$ has multiplicity $|V(H)|-d$ and the eigenvalue $\beta$ has multiplicity $d$. Up to translation, we may assume that the two distinct eigenvalues of $ZZ^T$ are 0, and $\lambda$. Applying Lemma~\ref{qi's}, we may replace $Z$ by $ZR$ (for some orthogonal matrix $R$) so that each column of $ZR$ is an entry-wise nonzero vector. If we set $Q=ZR$, then 
$QQ^T \in S(G)$, and $Q^TQ =R^T (Z^TZ)R = R^T (\lambda I_d) R = \lambda I_d.$ Hence the  columns of $Q$ are mutually orthogonal. Suppose the columns of $Q$ are the entry-wise nonzero vectors $q_1,\ldots, q_d$.
 Construct vectors $v_1, \ldots, v_d$ as follows
\[
v_1=\left[\begin{array}{c}
   q_1\Tstrut\Bstrut\\ \hline   \alpha_1 \mathbf{1}_{s_1} \vspace{0.1cm} \Tstrut\Bstrut\\ \hline \mathbf{0}_{s-s_1} \Tstrut\Bstrut \end{array}\right],
 v_2=\left [\begin{array}{c}  q_2 \Tstrut\Bstrut\\ \hline\mathbf{0_{s_1}} \Tstrut\Bstrut\\ \hline\alpha_2 \mathbf{1}_{s_2} \Tstrut\Bstrut\\ \hline\mathbf{0}_{s-s_1-s_2}\Tstrut\Bstrut \end{array}\right], 
\ldots,
 v_d=\left [\begin{array}{c}  q_d\Tstrut\Bstrut\\ \hline   \mathbf{0}_{s-s_d}\Tstrut\Bstrut\\ \hline \alpha_d \mathbf{1}_{s_d} \Tstrut\Bstrut \end{array}\right]
\]
the value of $\alpha_i$ will be determined. Let $q_i^T=\bmatrix q_{1,i}, &q_{2,i},& \ldots, &q_{n,i}  \endbmatrix$ for $i=1,\ldots, d$ where $|V(G)|=n$. 

For $j=2,\ldots,d+1$, let $B_{1,j}=[q_{j-1},\, q_{j-1},\, \ldots,\, q_{j-1}] $ be the $n \times s_{j-1}$ matrix with all columns equal to $q_{j-1}$. The matrix $B=\sum_{i=1}^{d} v_iv_i^T$ has the following form: 
 \[
B=\left[ 
\begin{array}{c|@{}c@{}cccc}
A\qquad && \alpha_1B_{1,2} & & \ldots &\alpha_dB_{1,d+1} \Tstrut\Bstrut\\ \hline
\alpha_1 B_{1,2}^T & & \,\alpha_1^2 J_{s_1\times s_1} &O& \ldots&O\Tstrut\Bstrut\\
                     \\ 
                 && O& \alpha_2^2 J_{s_2\times s_2} & & O \\
     \vdots              & &   & & & \vdots   \\
                       &&\vdots &  & \ddots & O\\\ 
                       \\
\alpha_d B_{1,d+1}^T &&O& \ldots& O& \alpha_d^2 J_{s_d\times s_d}\\
\end{array}\right].
\]
Now, for a positive number $\beta > \max \{||q_i||^2, i=1,\ldots, d\}$, set $\alpha_i=\displaystyle \sqrt{\frac{\beta-||q_i||^2}{s_i}}$. Then $B\in \mathcal{S}(H)$ and  $\rk(B)=d$, which implies the eigenvalue zero has multiplicity $|V(H)|-d$. On the other hand, $Bv_i=\beta v_i$ for each $i=1,2,\ldots,d$, and the vectors $v_1, \ldots, v_d$ are linearly independent. Thus $\sigma(B)=\{0,\beta\}$ with the desired multiplicities, and since $H$ has at least $d$ independent vertices, $d\leq MB(H)$. \qed 

Note that, in Lemma~\ref{Gjoinclique}, if $G$ has $\ell$ isolated vertices, then these vertices form an independent set. By Lemma 2.4, in order for $q(G)=2$, the union of the mutual common neighbours of an independent set cannot have more than $\ell$ elements; therefore, $\ell \leq \sum_i^d s_i$. Moreover, if $d=s_1=1$ and $G$ has isolated vertices, then there is a unique path from a vertex of $G$ to an isolated vertex of $G$ using the vertex of $K_{s_1}$, which implies the graph cannot have only two distinct eigenvalues. It is unclear if the statement of Lemma~\ref{Gjoinclique} holds in other cases when $G$ has $\ell$ isolated vertices.

A specific case of  Lemma~\ref{Gjoinclique} is when all the parts of a complete multipartite graph have the same size. 

\begin{corollary}
Let $H = K_{k,k,\dots, k}$, $k\geq 2$ be the complete multipartite graph on $n$ vertices. Then $q(H)=2$ and $MB(H) =k$.
\end{corollary}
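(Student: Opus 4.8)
The plan is to induct on the number $\ell$ of parts of $H=K_{k,k,\dots,k}$, treating the two-part case by hand and using Lemma~\ref{Gjoinclique} for the inductive step. Throughout I write $n=\ell k$, and I note that for $H$ to satisfy $q(H)=2$ we need $\ell\ge 2$ (a single part is an independent set and gives $q=1$), so the induction starts at $\ell=2$.

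For the base case $\ell=2$ I would handle $H=K_{k,k}$ directly, since Lemma~\ref{Gjoinclique} does not apply when one side of the join is an independent set. Viewing the vertex set as two parts of size $k$ and taking
\[
A=\begin{bmatrix} 0_k & \sqrt{\beta}\,O \\ \sqrt{\beta}\,O^T & 0_k\end{bmatrix},
\]
where $\beta>0$ and $O$ is a $k\times k$ orthogonal matrix with no zero entries, one checks that $A\in\mathcal{S}(K_{k,k})$ (the diagonal blocks are zero, matching the absence of edges inside a part, while every off-diagonal entry is nonzero) and that the eigenvalues of $A$ are $\pm\sqrt{\beta}$, each with multiplicity $k$. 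The existence of such an $O$ is guaranteed by Lemma~\ref{lem:Barrett} with $M_1=M_2=I_k$, since then $M_1^TRM_2=R$ is orthogonal with no zero entries. This shows $q(K_{k,k})=2$ and $K_{k,k}\in MP([k,k])$, hence $MB(K_{k,k})\le k$. The matching lower bound $MB(K_{k,k})\ge k$ follows from Lemma~\ref{lem:cupvee}(3): a part of $K_{k,k}$ is an independent set of size $k$, so $MB(K_{k,k})$ cannot be smaller than $k$. Thus $MB(K_{k,k})=k$.

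For the inductive step I would fix $\ell\ge 3$, assume the statement for $\ell-1$ parts, and write $H=G\vee(K_1\cup K_1\cup\cdots\cup K_1)$ with $k$ copies of $K_1$, where $G=K_{k,k,\dots,k}$ has $\ell-1\ge 2$ parts. This is precisely the form required by Lemma~\ref{Gjoinclique} with $d=k$ and $s_1=\cdots=s_k=1$, so it remains to verify that lemma's hypotheses against the inductive hypothesis $q(G)=2$, $MB(G)=k$. First, $G$ has no isolated vertices, since with at least two parts every vertex has a neighbour. Second, $0<\mr_+(G)\le d=k\le |V(G)|$: the graph $G$ has an edge so $\mr_+(G)\ge 1$; Lemma~\ref{lem:mmr+} gives $\mr_+(G)\le MB(G)=k$; and $|V(G)|=k(\ell-1)\ge 2k\ge k$. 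Third, because $MB(G)=k$, Lemma~\ref{lem:vectorconst} supplies orthonormal vectors $z_1,\dots,z_k$ with $Z=[z_1,\dots,z_k]$ satisfying $ZZ^T\in\mathcal{S}(G)$ and spectrum $\{0^{(|V(G)|-k)},1^{(k)}\}$; these are mutually orthogonal and $q(ZZ^T)=2$. With all hypotheses met, Lemma~\ref{Gjoinclique} yields $q(H)=2$ and $MB(H)=d=k$, completing the induction.

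The only step requiring genuine care is the base case upper bound $MB(K_{k,k})\le k$, precisely because $K_{k,k}$ is a join of two independent sets and therefore falls outside the scope of Lemma~\ref{Gjoinclique}; the remainder of the argument is bookkeeping that matches the hypotheses of that lemma against the inductive hypothesis. Within the base case the one subtlety is arranging that the orthogonal block $O$ be entry-wise nonzero, so that $A$ genuinely lies in $\mathcal{S}(K_{k,k})$, and this is exactly what Lemma~\ref{lem:Barrett} delivers.
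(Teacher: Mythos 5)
Your proof is correct and follows essentially the same route as the paper's: establish the base case $K_{k,k}$ and then induct on the number of parts via Lemma~\ref{Gjoinclique} with $d=k$ and $s_1=\cdots=s_k=1$, using Lemma~\ref{lem:vectorconst} to produce the required matrix $ZZ^T$. The only (cosmetic) difference is in the base case, where you build the matrix explicitly from an entry-wise nonzero orthogonal block supplied by Lemma~\ref{lem:Barrett} and get the lower bound $MB(K_{k,k})\geq k$ from the independent-set condition in Lemma~\ref{lem:cupvee}(3), while the paper simply cites $q(K_{k,k})=2$ from Lemma~\ref{lem:achievablePs} and the known positive semidefinite minimum rank of $K_{k,k}$.
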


\proof Using Lemma \ref{lem:achievablePs}, we know that $q(K_{k,k})=2$ and it is not difficult to determine that $MB(K_{k,k})=k$ (since the minimum semidefinite rank of $K_{k,k}$ is equal to $k$). The graph $K_{k,k}$ represents the base case for an induction argument based on the number of parts. Assume for the complete multipartite $G=K_{k,k,\dots, k}$ on $l-1$ ($l \geq 3$) parts that $q(G)=2$ with $MB(G)=k$. Let $H=K_{k,k,\dots, k}$ be the complete multipartite graph with $l$ parts. Then $H = G\vee (K_{1}\cup K_{1}\cup\cdots \cup K_{1})$, where $G = K_{k,k,\dots, k}$ with one less part than $H$. The next step involves an application of Lemma~\ref{Gjoinclique}. If we $d=k$ and using the fact that $q(G)=2$ with $MB(G)=k$, then there exists a matrix $A$ in $S(G)$ such that $A=ZZ^{T}$, where $Z$ is an $n \times d$ matrix and with $Z^{T}Z = \lambda I_{d}$. From Lemma~\ref{Gjoinclique} it follows that $q(H)=2$.
\qed

For a vertex $v$ in a graph $G$, a new graph $G'$ can be constructed by \textsl{cloning} (or \textsl{duplicating}) $v$. The graph $G'$ has vertex set $V(G')=V(G)\cup \{v'\}$ and edge set $E(G')=E(G)\cup\{v'u; u\in N[v]\}$, where $N[v]$ is the closed neighbourhood of $v$ (that is, a neighbourhood of $v$ containing $v$). It turns out that cloning a vertex of a graph $G$ with $MB(G)=k$ results in a graph $G'$ with $MB(G')\leq k$. The following proposition is proved in Theorem~6.3 of  \cite{CGMJ}, it is also implied by Corollary~4 of \cite{AABBCGKMW}. In~\cite{CGMJ}, this is used to characterize graphs $G$ with $MB(G)=2$ by constructing minimal such graphs (these are $K_1$, $K_1\cup K_1$, $K_{2,1}$, $K_{2,2,\dots,2}$ and $K_{2,2,\dots,2,1}$) and constructing all the other such graphs by cloning vertices in the minimal graphs.

\begin{proposition}\label{prop:cloning}
Let $G$ be a graph with $G \in MP([n-k,k])$. If $H$ is obtained from $G$ by cloning a vertex in $G$, then $H \in MP([n-k+1,k])$.
\end{proposition}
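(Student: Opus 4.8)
The plan is to realize the partition of $G$ by a positive semidefinite rank-$k$ matrix and then manufacture the corresponding matrix for $H$ by \emph{splitting} the row indexed by the cloned vertex into two proportional rows. Since $G\in MP([n-k,k])$, after subtracting a multiple of the identity and rescaling (both of which preserve membership in $\mathcal{S}(G)$ since the diagonal is free and off-diagonal entries stay nonzero) we may assume there is $A\in\mathcal{S}(G)$ with spectrum $\{0^{(n-k)},1^{(k)}\}$. As in Lemma~\ref{lem:vectorconst} we write $A=UU^T$ with $U$ an $n\times k$ matrix whose columns are orthonormal. Writing $r_1,\dots,r_n\in\mathbb{R}^k$ for the \emph{rows} of $U$, orthonormality of the columns is exactly the identity $\sum_{w=1}^n r_w r_w^T=U^TU=I_k$, while for $w\neq w'$ the entry $A_{ww'}=\langle r_w,r_{w'}\rangle$ is nonzero precisely when $w$ and $w'$ are adjacent in $G$.

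Let $v$ be the cloned vertex and $v'$ its clone, so in $H$ the vertex $v'$ is adjacent exactly to $N[v]$. First I would fix scalars $\alpha,\gamma$ with $\alpha^2+\gamma^2=1$ and $\alpha\gamma\neq 0$ (say $\alpha=\gamma=1/\sqrt{2}$) and form the $(n+1)\times k$ matrix $U'$ whose rows agree with those of $U$ for every $w\neq v$, whose row at $v$ is $\alpha r_v$, and whose row at $v'$ is $\gamma r_v$. Setting $B=U'(U')^T$, the constraint $\alpha^2+\gamma^2=1$ gives
\[
(U')^TU'=\sum_{w\neq v} r_w r_w^T+(\alpha^2+\gamma^2)\,r_v r_v^T=\sum_{w=1}^n r_w r_w^T=I_k,
\]
so the columns of $U'$ are again orthonormal and $B$ has spectrum $\{0^{(n+1-k)},1^{(k)}\}$, i.e.\ the multiplicities $n-k+1$ and $k$ required for the partition $[n-k+1,k]$ of $n+1$.

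It then remains to verify that $B\in\mathcal{S}(H)$, that is, that $B$ has the off-diagonal zero pattern of $H$. For indices $w,w'$ both distinct from $v,v'$ the entry $B_{ww'}=\langle r_w,r_{w'}\rangle=A_{ww'}$ is unchanged, so the structure of $G$ is preserved on these vertices. The remaining entries are $B_{vw}=\alpha\langle r_v,r_w\rangle$ and $B_{v'w}=\gamma\langle r_v,r_w\rangle$ for $w\neq v,v'$, together with $B_{vv'}=\alpha\gamma\|r_v\|^2$; the first two are nonzero exactly when $w\in N(v)$, and $B_{vv'}$ is nonzero as well. Matching these against the edge set of $H$ (where $v$ and $v'$ are adjacent and each is adjacent to $N(v)$ and nothing else) shows $B\in\mathcal{S}(H)$, and therefore $H\in MP([n-k+1,k])$.

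The delicate point, which I expect to be the main obstacle, is the hidden assumption $r_v\neq 0$: the splitting produces the required nonzero entry $B_{vv'}=\alpha\gamma\|r_v\|^2$ only when the cloned vertex is not isolated in $G$ (note $r_v=0$ if and only if row $v$ of $A$ vanishes, i.e.\ $v$ is isolated). For a non-isolated $v$ the argument above is complete. If $v$ is isolated then cloning it creates a new $K_2$ component and the proportional-row construction degenerates, so this case must be handled separately by a direct component-wise realization; this is the step where extra care is genuinely needed, since a newly created $K_2$ forces its own pair of distinct eigenvalues and one must check these can be aligned with the two eigenvalues used on the rest of the graph.
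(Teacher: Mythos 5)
Your row-splitting of the Gram decomposition is exactly the argument behind the sources the paper cites for this statement (Theorem~6.3 of \cite{CGMJ} and Corollary~4 of \cite{AABBCGKMW}; the paper itself gives no proof), and for a non-isolated cloned vertex your write-up is complete and correct: the normalization $\alpha^2+\gamma^2=1$ preserves orthonormality of the columns, the off-diagonal pattern is inherited because every new entry is a nonzero multiple of an old one, and $B_{vv'}=\alpha\gamma\|r_v\|^2\neq 0$ because a non-isolated $v$ has a neighbour $w$ with $\langle r_v,r_w\rangle\neq 0$, forcing $r_v\neq 0$. One small slip: $r_v=0$ is equivalent to the vanishing of the \emph{entire} row of $A$, including the diagonal entry $A_{vv}=\|r_v\|^2$, which is strictly stronger than $v$ being isolated --- an isolated vertex may carry diagonal entry equal to the multiplicity-$k$ eigenvalue, in which case your construction still goes through.

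The deferred isolated case, however, is not merely delicate: it is where the statement, read literally, fails, so no ``direct component-wise realization'' can close the gap. Take $G=K_{2,2}\cup K_1\in MP([3,2])$ (realize $K_{2,2}$ with spectrum $\{0^{(2)},1^{(2)}\}$ via Lemma~\ref{lem:vectorconst} and append a zero diagonal entry). Cloning the isolated vertex gives $H=K_{2,2}\cup K_2$. Any $B\in\mathcal{S}(H)$ is block diagonal, and the $K_2$ block, having a nonzero off-diagonal entry, contributes two \emph{distinct} eigenvalues; a realization of $[4,2]$ would therefore require a matrix in $\mathcal{S}(K_{2,2})$ with spectrum $\{x^{(3)},y^{(1)}\}$. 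Writing such a matrix as $xI\pm ww^{T}$, the pattern of $K_{2,2}$ forces all four entries of $w$ to be nonzero (from the cross edges) while also forcing $w_1w_2=0$ (from a non-edge inside a part), a contradiction. Hence $H\notin MP([4,2])$. So your argument proves the proposition exactly in the generality in which the paper actually uses it (cloning vertices of $K_{k,\dots,k}$ and of $P_n^2$, none of which are isolated); the honest resolution of your flagged case is that it needs an added hypothesis (the cloned vertex is not isolated, or $G$ is connected), not added care.
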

 
Suppose $G$ is a graph with $q(G) = 2$. If $H$ is obtained from  $G$ by cloning a vertex, then $MB(H) \leq MB(G)$. It is not clear if this inequality is ever strict. By cloning vertices in $K_{k, k, \ldots, k}$, where $k\geq 2$, we have the following consequence, reminiscent of Lemma 2.3.

\begin{corollary}\label{cor:canonical}
If $G = \bigvee_{i=1}^{l}(\cup_{j=1}^{k}K_{a_{i,j}})$ where $k\geq 2$, $i=1,2,\dots,l, j=1,2,\dots,k$, and $a_{i,j}$ are positive integers, then $q(G) =2$ and $MB(G) = k$.\qed
\end{corollary}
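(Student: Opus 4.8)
The plan is to realize $G$ as the result of repeatedly cloning vertices in the complete multipartite graph $K_{k,k,\dots,k}$ on $l$ parts, and then to read off both conclusions from the preceding corollary together with Proposition~\ref{prop:cloning}. First I would observe that $K_{k,k,\dots,k}$ on $l$ parts is exactly $\bigvee_{i=1}^{l}(\cup_{j=1}^{k} K_1)$, so it already lies in the stated family, with every $a_{i,j}=1$; by the preceding corollary it satisfies $q=2$ and $MB=k$ (this uses $k\ge 2$, which is assumed, together with $l\ge 2$, so that the base graph genuinely has edges and the multipartite hypothesis applies).

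Next I would describe how to grow this base graph into $G$. Cloning a vertex $v$ lying in the clique component $K_{b_{i,j}}$ of part $V_i$ adds a new vertex $v'$ adjacent to $N[v]$; in any intermediate graph of the form $\bigvee_{i=1}^{l}(\cup_{j=1}^{k} K_{b_{i,j}})$, the closed neighbourhood $N[v]$ consists precisely of that one clique component together with every vertex of the other parts $V_{i'}$ with $i'\ne i$. Hence $v'$ enlarges that single clique to $K_{b_{i,j}+1}$ and is joined to all other parts, while staying non-adjacent to the remaining components inside $V_i$ --- exactly the structure we want. Cloning the appropriate vertex $a_{i,j}-1$ times for each pair $(i,j)$ therefore produces $G$. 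Confirming that this operation really grows one prescribed clique without introducing spurious edges at every intermediate stage is the one step I would write out carefully, since it is the structural heart of the argument; everything else is an application of the named results.

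For the upper bound I would then invoke Proposition~\ref{prop:cloning} once per clone: starting from $K_{k,\dots,k}\in MP([\,lk-k,\,k\,])$, each cloning keeps the smaller part equal to $k$ while raising the order by one, so $G\in MP([\,N-k,\,k\,])$ with $N=\sum_{i,j}a_{i,j}$. As $G$ has an edge, this yields $q(G)=2$ and $MB(G)\le k$. For the matching lower bound I would exhibit an independent set of size $k$: choosing one vertex from each of the $k$ clique components inside a single part $V_1$ gives $k$ pairwise non-adjacent vertices. Statement (3) of Lemma~\ref{lem:cupvee} forbids an independent set larger than $MB(G)$, so $MB(G)\ge k$, and combining the two bounds gives $MB(G)=k$.

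The main obstacle is bookkeeping rather than a genuine difficulty: the delicate point is verifying that cloning, defined through closed neighbourhoods, grows the intended clique component while respecting the join-of-unions-of-cliques structure throughout the sequence of clonings. Once that is pinned down, the $q=2$ and upper-bound half (via Proposition~\ref{prop:cloning}) and the lower-bound half (via the independent set and Lemma~\ref{lem:cupvee}) both follow immediately.
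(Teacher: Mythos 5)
Your proposal is correct and matches the paper's (largely implicit) argument exactly: the corollary is obtained by cloning vertices of $K_{k,k,\dots,k}$, invoking Proposition~\ref{prop:cloning} for $q(G)=2$ and $MB(G)\le k$, and using the independent set of size $k$ inside one part together with Statement (3) of Lemma~\ref{lem:cupvee} for the lower bound. Your careful verification that cloning preserves the join-of-unions-of-cliques structure is exactly the bookkeeping the paper leaves to the reader.
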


Unlike the case where $k=2$, for general $k$ the previous corollary does not characterize all the graphs in $MP([n-k,k])$. In Section 4, several graphs in $MP([n-k,k])$ that are not included in Corollary~\ref{cor:canonical} are given. 

\begin{theorem}\label{thm:joinpairk}
Let $G$ and $H$ be two graphs with no isolated vertices. Further assume that $q(G) = q(H)=2$ with 
$MB(G) = MB(H)$. Then $q(G \vee H) =2$ and $MB(G\vee H) \leq MB(G) \, (=MB(H))$.
\end{theorem}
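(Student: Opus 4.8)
The plan is to construct an explicit matrix in $\mathcal{S}(G \vee H)$ with exactly two distinct eigenvalues by stacking the orthonormal realizations of $G$ and $H$ provided by Lemma~\ref{lem:vectorconst}. Write $n_1 = |V(G)|$, $n_2 = |V(H)|$, and $k = MB(G) = MB(H)$. By Lemma~\ref{lem:vectorconst} there are $A \in \mathcal{S}(G)$ and $B \in \mathcal{S}(H)$ with $A = UU^T$ and $B = WW^T$, where $U = [u_1, \dots, u_k]$ and $W = [w_1, \dots, w_k]$ have orthonormal columns, so that $A$ and $B$ each have spectrum $\{0, 1\}$ with the eigenvalue $1$ of multiplicity $k$. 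Since $G$ and $H$ have no isolated vertices, neither $A$ nor $B$ has a zero row, and hence neither $U$ nor $W$ has a zero row (a zero row of $U$ would force a zero row of $A = UU^T$).

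First I would set up the block structure: for a cross-block $C$ to be chosen, consider $M = \begin{pmatrix} A & C \\ C^T & B \end{pmatrix}$. The diagonal blocks already carry the correct adjacency patterns, so for $M \in \mathcal{S}(G \vee H)$ the only requirement is that $C$ be entry-wise nonzero, since every vertex of $G$ is adjacent to every vertex of $H$. The natural choice $C = UW^T$ arises from the stacked vectors $v_i = \begin{pmatrix} u_i \\ w_i \end{pmatrix}$, which satisfy $v_i^T v_j = u_i^T u_j + w_i^T w_j = 2\delta_{ij}$. Thus $M = \sum_{i=1}^k v_i v_i^T$ has $v_1, \dots, v_k$ as orthogonal eigenvectors for the eigenvalue $2$ and spectrum $\{0^{(n_1+n_2-k)}, 2^{(k)}\}$.

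The main obstacle is that $C = UW^T$ need not be entry-wise nonzero. I would remove this by exploiting the freedom to replace $W$ with $WS^T$ for an orthogonal $k \times k$ matrix $S$: this preserves both $WW^T = B$ and the orthonormality of the columns, while changing the cross-block to $U(WS^T)^T = USW^T$. Applying Lemma~\ref{lem:Barrett} with $M_1 = U^T$ and $M_2 = W^T$ (each has $k$ rows and, by the no-zero-row observation, no zero columns) yields an orthogonal $S$ for which $USW^T$ has no zero entries. With this $S$ the resulting $M$ lies in $\mathcal{S}(G \vee H)$ and retains spectrum $\{0^{(n_1+n_2-k)}, 2^{(k)}\}$. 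This step needs $k \geq 2$; the case $k = 1$ is immediate, since then $G$ and $H$ are complete by Lemma~\ref{lem:cupvee}, so $G \vee H$ is complete with $MB = 1$ (and in that case $u_1, w_1$ are already entry-wise nonzero, so the same construction works directly).

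Finally I would conclude. The matrix $M$ witnesses $q(G \vee H) = 2$ and places $G \vee H$ in $MP([n_1 + n_2 - k, k])$; because $2k \leq n_1 \leq n_1 + n_2$ the first block is genuinely the larger one, so this is a valid bipartition giving $MB(G \vee H) \leq k = MB(G)$. The one point to watch is the verification that $A$ and $B$ have no zero rows, which is precisely where the hypothesis that $G$ and $H$ have no isolated vertices is used and what allows Lemma~\ref{lem:Barrett} to apply.
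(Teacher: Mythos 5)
Your proposal is correct and follows essentially the same route as the paper: both realize $A=UU^T$ and $B=WW^T$ via rank-$k$ factorizations with orthonormal columns, invoke Lemma~\ref{lem:Barrett} (using the no-isolated-vertices hypothesis to rule out zero rows) to twist one factor by an orthogonal matrix so the off-diagonal block $URW^T$ is entry-wise nonzero, and read off the spectrum $\{0^{(n_1+n_2-k)},2^{(k)}\}$ of the resulting block matrix. The only cosmetic difference is that you verify the spectrum via the stacked orthogonal eigenvectors $v_i$ while the paper uses $C^2=2C$ together with $\rk(C)=k$.
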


\begin{proof} Assume $MB(G)=MB(H)=k$. If $k=1$, there is nothing to prove, so assume $k \geq 2$.
Let $n_1$ be the number of vertices in $G$ and $n_2$ the number of vertices in $H$. Let $A \in \mathcal{S}(G)$ be such that $\sigma(A) =\left \{ 0^{(n_1 -k)}, 1^{(k)} \right \}$ and let $B \in \mathcal{S}(H)$ be such that $\sigma(B) =\left \{ 0^{(n_2 -k)}, 1^{(k)} \right \}$. By Schur's Theorem, there exists orthogonal matrices $Q_1$ and $Q_2$ such that 
\[
A = Q_{1}^{T} (I_k \oplus O_{n_1 - k}) Q_1 \quad  \text{and} \quad 
B = Q_{2}^{T} (I_k \oplus O_{n_2 -k}) Q_2.
\]
Let 
\[
M_1 = Q_1[1, \ldots, k \mid 1, \ldots, n_1] \quad  \text{and} \quad 
M_2 = Q_2[1, \ldots, k \mid 1, \ldots, n_2],
\]
so $A = M_{1}^{T} M_1$ and $B = M_{2}^{T} M_2$.
This also implies that $M_1$ has no zero columns, since otherwise $A$ would have a row and column of zeros which would imply that $G$ would have an isolated vertex. Similarly,  $M_2$ has no zero columns. 

By Lemma~\ref{lem:Barrett}, there exists a $k \times k$ matrix $R$ such that $R^T R = I_k$ and $M_{1}^{T} R M_2$ has no zero entries. Define $C$ as follows:
\begin{eqnarray*}
 C =
\begin{bmatrix}
M_{1}^{T}\\ 
M_{2}^{T}R^{T}
\end{bmatrix}
\begin{bmatrix}
M_1 & R M_2
\end{bmatrix}
& = &
\begin{bmatrix}
M_{1}^{T}M_1 & M_{1}^{T}R M_2 \\ 
M_{2}^{T} R^T M_1 & M_{2}^{T} R^T R M_2
\end{bmatrix} \\
& = &
\begin{bmatrix}
A & M_{1}^{T}R M_2 \\ 
M_{2}^{T} R^T M_1 & B
\end{bmatrix}.
\end{eqnarray*}

Hence $C$ is positive semidefinite and $C \in \mathcal{S}(G \vee H)$ since $M_{1}^{T}R M_2$ is an entry-wise nonzero matrix. It is easy to note that $C$ has rank $k$ since $[M_1 \; RM_2]$ has a full-row rank. Therefore, $\nul(C) = n_1 + n_2 -k$. Moreover, $C^2 = 2C$ which implies $\sigma(C) = \left \{ 0^{(n_1 + n_2 -k)}, 2^{(k)} \right \}$. Hence $q(G \vee H) =2$ since $C \in \mathcal{S}(G\vee H)$ and $q(G\vee H) >1$.
\end{proof}

In Theorem~\ref{thm:joinpairk}, where $k = MB(G)=MB(H)$, if $k=1$, then the inequality $MB(G\vee H) \leq 1$ 
cannot be strict, because $MB(G\vee H)\geq 1$ for all graphs where it is defined. Similarly, if $k=2$, then the inequality $MB(G\vee H) \leq 2$ cannot be strict, because $MB(G\vee H)=1$ implies that $G\vee H$ is a complete graph with possibly isolated vertices, which is a contradiction. Moreover, if $k=3$, then the inequality $MB(G\vee H) \leq 3$ cannot be strict, because $MB(G\vee H)=1,2$ implies that $G\vee H$ is either a complete graph with possibly isolated vertices, or a graph characterized in Lemma 2.3 (2), either case is a contradiction.  

For cases of $k \geq 4$ in Theorem~\ref{thm:joinpairk}, it is still unclear if a strict inequality can hold in
$MB(G\vee H) \leq k$. We suspect, that in fact, equality among $MB(G \vee H) = MB(G) = MB(H)$ holds under the hypothesis of Theorem~\ref{thm:joinpairk}. A related matter is to determine if a version of Theorem~\ref{thm:joinpairk} still holds in the case when $MB(G)\neq MB(H)$. It turns out that the requirement of $MB(G) = MB(H)$ is essential in concluding that $q(G \vee H) =2$ as in Theorem~\ref{thm:joinpairk}. Consider the following example. Let $G=Q_6$ (the 6-dimensional hypercube), and let $S = \{x,y,z\}$ be the independent set of vertices in $G$ consisting of $x=(000000), y=(010101),$ and $z=(111111)$. Also observe that there are no common neighbours among any pair of vertices from $S$ in $G$. Let $H= P_2$. Then we have $q(G) = q(H)=2$, and $MB(G) \geq 3$ and  $MB(H) =1$. However, in the graph $G\vee H$, using the independent set $S$, it is easy to deduce that the condition of Lemma 2.4 fails. Hence $q(G \vee H) >2$.

In fact, this idea can be easily generalized as follows: Suppose $G$ is a graph with $q(G)=2$ that contains an independent set of vertices $S = \{v_1, v_2, \ldots, v_k\}$ in which 
$\displaystyle \cup_{v_i,v_j\in S}(N(v_i)\cap N(v_j)) = \emptyset$. Then for any graph $H$ with $q(H)=2$ and $|H|<k$, we have $q(G \vee H) >2$. To see this, it is enough to observe that in the graph $G \vee H$ we have 
\[  |\cup_{v_i,v_j\in S}(N(v_i)\cap N(v_j))| = |H| < |S|, \] and hence the condition of Lemma 2.4 fails to hold.

We also note that the assumption of no isolated vertices in Theorem~\ref{thm:joinpairk} is possibly a stronger condition than is in fact necessary; this assumption is used to ensure that the matrix $M_2$ in the proof has no zero columns. For instance, in the next result, which is a consequence of Lemma~\ref{Gjoinclique}, all the vertices of the second graph are isolated vertices. The proof of Lemma~\ref{lem:joinpairkisolated} is the same as the proof of Theorem~\ref{thm:joinpairk}, except that the matrix $B$ is replaced with the identity matrix. We denote the graph on $k$ vertices with no edges by $\overline{K_k}$.

\begin{lemma}\label{lem:joinpairkisolated}
Let $G$ be a graph with no isolated vertices and $q(G) =2$ with $G\in MP([n -k, k])$ for some $k \geq 2$. Then $q(G \vee \overline{K_k}) =2$ with $MB(G\vee \overline{K_k}) \leq k$.
\end{lemma}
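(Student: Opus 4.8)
The plan is to mirror the proof of Theorem~\ref{thm:joinpairk}, letting $\overline{K_k}$ play the role of the second graph $H$ and replacing the matrix $B$ there by the identity $I_k$. The point is that $\overline{K_k}$ has no edges, so $\mathcal{S}(\overline{K_k})$ consists precisely of the $k\times k$ diagonal matrices; in particular $I_k\in\mathcal{S}(\overline{K_k})$, and trivially $I_k=M_2^{T}M_2$ with $M_2=I_k$, a matrix whose $k$ rows are orthonormal and which has no zero columns. This substitution is exactly what lets the argument survive even though every vertex of the second factor is isolated.

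First I would invoke $G\in MP([n-k,k])$ to choose $A\in\mathcal{S}(G)$ with $\sigma(A)=\{0^{(n-k)},1^{(k)}\}$, and by Schur's theorem write $A=Q_1^{T}(I_k\oplus O_{n-k})Q_1$ for an orthogonal $Q_1$. Setting $M_1=Q_1[1,\ldots,k\mid 1,\ldots,n]$ gives $A=M_1^{T}M_1$, and since $G$ has no isolated vertices $M_1$ has no zero columns. Next I would apply Lemma~\ref{lem:Barrett} to $M_1$ and $M_2=I_k$ to obtain an orthogonal $R$ with $M_1^{T}R$ entry-wise nonzero, and assemble
\[
C=\begin{bmatrix} A & M_1^{T}R \\ R^{T}M_1 & I_k \end{bmatrix}.
\]
I would then check that $C\in\mathcal{S}(G\vee\overline{K_k})$: the $A$ block lies in $\mathcal{S}(G)$, the off-diagonal block $M_1^{T}R$ is entry-wise nonzero so it records all the join edges, and the diagonal block $I_k$ correctly encodes the absence of edges inside $\overline{K_k}$.

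Finally, writing $C=N^{T}N$ with $N=[\,M_1\mid R\,]$, I would compute $NN^{T}=M_1M_1^{T}+RR^{T}=2I_k$, using that the first $k$ rows of the orthogonal matrix $Q_1$ are orthonormal so $M_1M_1^{T}=I_k$. This gives $C^2=2C$, and since $N$ has full row rank $k$ (as $R$ is invertible) we get $\rk(C)=k$, hence $\nul(C)=n$ and $\sigma(C)=\{0^{(n)},2^{(k)}\}$. Thus $q(G\vee\overline{K_k})\le 2$; because the join has edges we also have $q>1$, so $q(G\vee\overline{K_k})=2$, and the achieved bipartition $[n,k]$ of the $(n+k)$-vertex graph yields $MB(G\vee\overline{K_k})\le k$. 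As for the main obstacle, there is little genuine difficulty here since the construction is dictated by Theorem~\ref{thm:joinpairk}; the only step needing care is confirming that replacing $B$ by $I_k$ does not disturb the two identities $M_1M_1^{T}=I_k$ and $M_2M_2^{T}=I_k$ that drive the spectral computation, and the latter is automatic for $M_2=I_k$, so $C^2=2C$ persists. One should also record that $k\le n/2<n$ (from $G\in MP([n-k,k])$), so $[n,k]$ is indeed listed in non-increasing order and the bound is $MB\le k$.
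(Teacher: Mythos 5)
Your proposal is correct and is exactly the argument the paper intends: the paper itself states that the proof of Lemma~\ref{lem:joinpairkisolated} is the proof of Theorem~\ref{thm:joinpairk} with the matrix $B$ replaced by the identity, which is precisely what you carry out (taking $M_2=I_k$, applying Lemma~\ref{lem:Barrett}, and verifying $C^2=2C$ with $\rk(C)=k$). All the details you check, including that the diagonal block $I_k$ encodes the edgeless graph $\overline{K_k}$ and that $M_1$ has no zero columns because $G$ has no isolated vertices, are the right ones.
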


Note that the multiplicity bipartition $[n-k,k]$ for regular complete multipartite graphs $K_{k,k,\dots, k}$ can also be obtained from the proof of Theorem~\ref{thm:joinpairk} and induction.

%

It is also interesting to note that the minimum number of distinct eigenvalues of the join of two graphs can be large.

\begin{lemma}
For any graph $G$, $q(G \vee K_1) \geq \lceil \frac{q(G)+1}{2} \rceil$.
\end{lemma}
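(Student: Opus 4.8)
The plan is to fix an arbitrary matrix $A \in \mathcal{S}(G \vee K_1)$ and bound its number of distinct eigenvalues from below by comparing $A$ with a principal submatrix that lies in $\mathcal{S}(G)$. I would write $A$ in block form
\[
A = \begin{bmatrix} B & b \\ b^T & c \end{bmatrix},
\]
where $B$ is the principal submatrix indexed by $V(G)$, $b$ records the edges from the apex vertex to $G$ (entrywise nonzero, though this fact is not needed here), and $c$ is the scalar diagonal entry of the apex. Since $B \in \mathcal{S}(G)$, we have $q(B) \geq q(G)$, so it suffices to establish $q(A) \geq \lceil (q(B)+1)/2\rceil$ and then minimize over $A$. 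Thus the whole argument reduces to a single interlacing estimate relating $q(A)$ to $q(B)$.

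The heart of the proof is the claim that if $A$ is an $(n+1)\times(n+1)$ real symmetric matrix with $m=q(A)$ distinct eigenvalues and $B$ is any $n \times n$ principal submatrix, then $q(B) \leq 2m - 1$. To prove this I would let $\theta_1 < \cdots < \theta_m$ be the distinct eigenvalues of $A$ with multiplicities $k_1, \ldots, k_m$, so that $\sum_{j} k_j = n+1$. By Cauchy interlacing, deleting one row and the corresponding column changes the multiplicity of any fixed value by at most one; hence each $\theta_j$ is an eigenvalue of $B$ of multiplicity at least $k_j - 1$. These forced copies account for at least $\sum_{j}(k_j - 1) = (n+1) - m$ of the $n$ eigenvalues of $B$ counted with multiplicity, leaving at most $n - ((n+1)-m) = m - 1$ remaining eigenvalues of $B$ free to take new values. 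Therefore the number of distinct eigenvalues of $B$ is at most $m$ (from the $\theta_j$ themselves) plus $m-1$ (from the remaining eigenvalues), giving $q(B) \leq 2m - 1$.

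Combining the two observations yields $q(G) \leq q(B) \leq 2q(A) - 1$ for every $A \in \mathcal{S}(G\vee K_1)$, so that $q(A) \geq (q(G)+1)/2$; since $q(A)$ is an integer this forces $q(A) \geq \lceil (q(G)+1)/2\rceil$. Taking the minimum over all $A \in \mathcal{S}(G \vee K_1)$ then gives $q(G \vee K_1) \geq \lceil (q(G)+1)/2\rceil$, as required.

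The step I expect to require the most care is the interlacing estimate $q(B) \leq 2m-1$: one must verify that deleting a single row and column can introduce at most $m-1$ genuinely new eigenvalues, essentially one per gap between consecutive eigenvalues of $A$. The cleanest route is the multiplicity bookkeeping above (each eigenvalue's multiplicity drops by at most one under a rank-one-type restriction), which sidesteps a more delicate gap-by-gap interlacing analysis that produces the same bound. Everything else is a routine reduction, so the proof essentially stands or falls on stating and applying this estimate correctly.
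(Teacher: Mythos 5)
Your proof is correct and takes essentially the same approach as the paper: the paper's entire proof is a one-line appeal to the fact that the eigenvalues of the principal submatrix in $\mathcal{S}(G)$ interlace those of the matrix in $\mathcal{S}(G \vee K_1)$, and your multiplicity bookkeeping (each multiplicity drops by at most one, leaving at most $m-1$ new eigenvalues, hence $q(B) \leq 2q(A)-1$) is exactly the detail that one-liner suppresses.
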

\begin{proof}
The eigenvalues for any matrix in $\mathcal{S}(G)$ interlace the eigenvalues any matrix $\mathcal{S}(G \vee K_1)$.
\end{proof}

The next theorem  is the main result of~\cite{MShader}.

\begin{theorem}[Theorem 4.3~\cite{MShader}]\label{thm:MS}
Let $G$ be a connected graph on $n$ vertices and let
$\lambda_1, \lambda_2, \dots, \lambda_n$ be distinct real numbers. Then there exists a real symmetric matrix $A \in \mathcal{S}(G)$ with eigenvalues $\lambda_1, \lambda_2, \dots, \lambda_n$  such that none of the eigenvectors of $A$ has a zero entry.
\end{theorem}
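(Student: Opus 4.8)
The plan is to realize $A = Q\,\mathrm{diag}(\lambda_1,\dots,\lambda_n)\,Q^{T}$ for a suitable orthogonal matrix $Q$, noting that the columns of $Q$ are exactly the unit eigenvectors. In this formulation the requirement $A\in\mathcal S(G)$ splits into two kinds of conditions: the \emph{equality} constraints $(Q\Lambda Q^{T})_{ij}=0$ for every non-edge $\{i,j\}$, and the \emph{open} conditions $(Q\Lambda Q^{T})_{ij}\neq 0$ for every edge together with $Q_{ij}\neq 0$ for all $i,j$ (the nowhere-zero eigenvector requirement). Since the open conditions hold on a dense open set, the essential content is to solve the equality constraints while landing in the region where the open conditions hold. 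I would argue by induction on $n$, adding one vertex at a time by bordering; the base case $n=1$ is $A=[\lambda_1]$.

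Since $G$ is connected it has a vertex $v$ whose deletion leaves $G$ connected (a non-cut vertex, which always exists), so $G'=G-v$ is connected on $n-1$ vertices with $\deg_G(v)=d\ge 1$. Choose reals $\mu_1>\cdots>\mu_{n-1}$ strictly interlacing the target, $\lambda_i>\mu_i>\lambda_{i+1}$; by the inductive hypothesis there is $A'\in\mathcal S(G')$ with spectrum $\{\mu_1,\dots,\mu_{n-1}\}$ and eigenvectors $w_1,\dots,w_{n-1}$, none having a zero entry. Form
\[
A=\begin{bmatrix} A' & b\\ b^{T} & c\end{bmatrix},
\]
where $b$ is supported on $N(v)$ (so the edge pattern at $v$ is respected) and $c$ is the new diagonal entry. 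By the standard secular-equation analysis of a bordered matrix, the eigenvalues of $A$ are the roots of $\lambda-c-\sum_j \gamma_j/(\lambda-\mu_j)$ with weights $\gamma_j=(w_j^{T}b)^2$, and the associated interlacing inverse problem has a \emph{unique} solution with all weights positive, namely $c^{*}=\sum_i\lambda_i-\sum_j\mu_j$ and $\gamma_j^{*}=-\frac{\prod_i(\mu_j-\lambda_i)}{\prod_{k\ne j}(\mu_j-\mu_k)}>0$ (the sign is forced by the interlacing). Thus everything reduces to producing a border vector $b$ with support exactly $N(v)$ satisfying $(w_j^{T}b)^2=\gamma_j^{*}$ for all $j$.

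The main obstacle is precisely this matching step. Writing $W=[w_1,\dots,w_{n-1}]$ (orthogonal), the attainable data $W^{T}b$ ranges only over the row space of the submatrix of $W$ with rows indexed by $N(v)$, a subspace of dimension at most $d=\deg_G(v)$; so when $v$ has small degree the $n-1$ equations $(w_j^{T}b)^2=\gamma_j^{*}$ are overdetermined for a \emph{fixed} $A'$, and naive bordering fails. The resolution is to use the remaining continuous freedom: the interlacing values $\mu_1,\dots,\mu_{n-1}$ may be varied within their open interlacing cell, and as they vary both the target weights $\gamma_j^{*}$ and (through $A'$) the relevant row space move. I would assemble this into a single system and invoke the implicit function theorem, showing its Jacobian has full rank at a conveniently chosen base point; connectivity of $G$ is indispensable here, guaranteeing that $N(v)$ meets the support of every eigenvector so that no weight is forced to vanish and the system stays non-degenerate. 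This is where the real work lies.

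Finally, once a matrix $A\in\mathcal S(G)$ with the exact spectrum $\{\lambda_i\}$ is produced, the open conditions are secured by perturbation: the solution set of the equality constraints is (generically) a manifold on which $(Q\Lambda Q^{T})_{ij}\neq 0$ for edges and $Q_{ij}\neq 0$ everywhere each cut out a dense open subset, so a small perturbation staying on the manifold achieves all of them at once, and connectivity rules out any entry being identically zero on a component. As an alternative to the bordering induction, one could pursue a global transversality argument: study the eigenvalue map $\Phi:\mathcal S(G)\to\mathbb R^{n}$ and show it is a submersion at a well-chosen nowhere-zero-eigenvector matrix (its Jacobian along diagonal variations is the Hadamard-square matrix $[(x_i)_k^2]_{i,k}$ of eigenvector entries, which one checks is nonsingular at a suitable base point), and then reach the prescribed $\Lambda$ by continuation along the convex set of strictly ordered spectra; there the difficulty simply migrates to keeping the eigenvectors nowhere zero and the Jacobian nonsingular along the entire homotopy.
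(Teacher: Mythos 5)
First, note that the paper does not prove this statement at all: it is imported verbatim as Theorem~4.3 of~\cite{MShader}, so there is no in-paper argument to compare yours against; your attempt has to stand on its own.

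As it stands it does not. The secular-equation bookkeeping is correct (the formulas $c^{*}=\sum_i\lambda_i-\sum_j\mu_j$ and $\gamma_j^{*}=-\prod_i(\mu_j-\lambda_i)/\prod_{k\neq j}(\mu_j-\mu_k)>0$ are right under strict interlacing), but the step you yourself flag as ``where the real work lies'' is exactly the theorem: when $\deg_G(v)=d<n-1$ you must solve the $n-1$ equations $(w_j^{T}b)^{2}=\gamma_j^{*}$ with $b$ confined to a $d$-dimensional coordinate subspace, and your proposed repair --- vary $\mu_1,\dots,\mu_{n-1}$ and apply the implicit function theorem --- is not even a well-posed system. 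The inductive hypothesis delivers only the \emph{existence} of some $A'\in\mathcal S(G')$ for each choice of $\mu$, not a smooth (or even continuous) family $\mu\mapsto(A'(\mu),W(\mu))$, so there is no Jacobian to compute and no base point at which to check full rank; nor is it clear that $n-1$ extra parameters suffice to kill $n-1-d$ obstructions while also meeting the sign/nonvanishing constraints. Separately, even granting a border vector $b$ with the right weights, the eigenvectors of the bordered matrix are $\bigl((\lambda_iI-A')^{-1}b,\,1\bigr)$ up to scaling, and their nowhere-zero property does not follow from that of the $w_j$; your closing appeal to ``perturbation on the solution manifold'' assumes without proof that the isospectral set within the closure of $\mathcal S(G)$ is a manifold near your point and that each nonvanishing condition is dense on it. For what it is worth, the actual proof in~\cite{MShader} is closer to the alternative you sketch in your last sentences: one first solves the problem on trees, then passes to a general connected graph by fixing a spanning tree and using an implicit-function-theorem (Jacobian-method) perturbation that turns on the non-tree edges while restoring the spectrum --- the nonsingularity of the relevant Jacobian being precisely what the nowhere-zero eigenvector condition guarantees. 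Making that continuation rigorous is the substance of the theorem, and it is the part your proposal leaves open.
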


\begin{lemma}\label{lem:MohammadMethod1}
Let $G$ be a connected graph on $n\geq 2$ vertices. Then
$q(G \vee \overline{K_n})=2$ and $MB(G \vee \overline{K_n}) = n$. 
\end{lemma}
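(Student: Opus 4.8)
The plan is to prove the two assertions separately: first establish the lower bound $MB(G\vee\overline{K_n})\geq n$, which is essentially free, and then exhibit a single matrix realizing both $q=2$ and $MB\leq n$, which is the substantive part.

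For the lower bound, I would note that the $n$ vertices of $\overline{K_n}$ form an independent set of size $n$ in $G\vee\overline{K_n}$: they are pairwise non-adjacent in $\overline{K_n}$, and the join adds edges only between the two parts. Since $G\vee\overline{K_n}$ is connected and (having at least one join edge) satisfies $q\geq 2$, once the construction below certifies $q=2$ I may apply Statement~(3) of Lemma~\ref{lem:cupvee}: if $MB(G\vee\overline{K_n})=k$ then there is no independent set of size $k+1$, forcing $n\leq k$. Hence $MB(G\vee\overline{K_n})\geq n$, and it remains only to produce a matrix with two distinct eigenvalues and multiplicity bipartition $[n,n]$.

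For the upper bound I would invoke Theorem~\ref{thm:MS}. Because $G$ is connected, choose distinct reals $\mu_1,\dots,\mu_n\in(0,1)$ and obtain $A\in\mathcal S(G)$ with eigenvalues $\mu_i$ and orthonormal eigenvectors $w_1,\dots,w_n$, \emph{none of which has a zero entry}. Indexing the vertices of $G\vee\overline{K_n}$ so that the first $n$ come from $G$ and the last $n$ from $\overline{K_n}$, and writing $e_i$ for the $i$-th standard basis vector of $\mathbb R^n$, I would set
\[
u_i=\begin{bmatrix}\sqrt{\mu_i}\,w_i\\[2pt]\sqrt{1-\mu_i}\,e_i\end{bmatrix}\in\mathbb R^{2n},\qquad i=1,\dots,n,
\]
and let $M=\sum_{i=1}^n u_iu_i^T$. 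A one-line computation using $w_i^Tw_j=e_i^Te_j=\delta_{ij}$ shows the $u_i$ are orthonormal, whence $M$ has spectrum $\{0^{(n)},1^{(n)}\}$ (cf.\ Lemma~\ref{lem:vectorconst}). Its block form is $\begin{bmatrix}A & B\\ B^T & D\end{bmatrix}$ with $A=\sum_i\mu_i w_iw_i^T$ in the $G$-block, $D=\operatorname{diag}(1-\mu_1,\dots,1-\mu_n)$ in the $\overline{K_n}$-block, and $B=\sum_i\sqrt{\mu_i(1-\mu_i)}\,w_ie_i^T$ in the join block.

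It then remains to verify that $M$ has the correct zero/nonzero pattern, i.e.\ $M\in\mathcal S(G\vee\overline{K_n})$. The $(1,1)$ block is exactly $A\in\mathcal S(G)$, and the $(2,2)$ block $D$ is diagonal, matching the edgeless $\overline{K_n}$. The crux—the step I expect to be the only real obstacle—is confirming that the join block $B$ is entrywise nonzero: its $i$-th column equals $\sqrt{\mu_i(1-\mu_i)}\,w_i$, a nonzero scalar (since $\mu_i\in(0,1)$) times $w_i$. This is precisely where the full strength of Theorem~\ref{thm:MS} is essential: because each $w_i$ has no zero entry, every column, and hence every entry, of $B$ is nonzero, so every vertex of $G$ is joined to every vertex of $\overline{K_n}$ as required. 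Combining the two bounds then yields $q(G\vee\overline{K_n})=2$ and $MB(G\vee\overline{K_n})=n$.
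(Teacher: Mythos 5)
Your proposal is correct and follows essentially the same route as the paper: both invoke Theorem~\ref{thm:MS} to obtain a nowhere-zero eigenbasis for some $A\in\mathcal S(G)$, append a scaled standard basis vector to each eigenvector to form $n$ orthonormal vectors in $\mathbb R^{2n}$ (your choice $\mu_i\in(0,1)$ just builds the normalization in from the start, where the paper instead tunes diagonal scalars $a_i$ afterward), and close with the size-$n$ independent set for the lower bound. No gaps.
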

\begin{proof}
Since $G$ is a connected graph, by Theorem~\ref{thm:MS} there exists a matrix $A \in \mathcal{S}(G)$ with positive distinct eigenvalues 
$$\lambda_1 > \lambda_2 > \cdots > \lambda_n$$
and corresponding entry-wise nonzero unit eigenvectors $v_1, \ldots, v_n$ such that
\[ A  = V^T \Lambda V= U^T U, \]
where $U = \Lambda^{\frac{1}{2}} V$, $\Lambda = \operatorname{diag}(\lambda_1, \ldots, \lambda_n)$, and $V^T=[v_1, \ldots, v_n]$. The rows of $U$ are orthogonal since $V$ is unitary. Let $C$ be the  $n \times 2n$ matrix
$
C = \begin{bmatrix}
D & U
\end{bmatrix},
$
where 
\[
D = 
\begin{bmatrix}
a_1 & 0 & \dots & 0   \\ 
0 & a_2 & \dots & 0   \\ 
\vdots && \ddots & \vdots\\
0 & 0 &\dots & a_n    \\ 
\end{bmatrix},
\] where the scalars $a_i$ $(i=1,2,\ldots, n)$ are to be determined.
Since $U$ is an entry-wise nonzero matrix, if each $a_i$ is also nonzero, then 
\[
C^T C = \begin{bmatrix}
D^2 & DU\\
U^T D & U^T U
\end{bmatrix} \in \mathcal{S}(G \vee \overline{K_n}).
\]
Further, the rows of $C$ are orthogonal and so 
\[
C C^T = 
\begin{bmatrix}
\alpha_1 & 0 & \dots & 0   \\ 
0 & \alpha_2 & \dots & 0   \\ 
\vdots && \ddots & \vdots\\
0 & 0 &\dots & \alpha_n    \\ 
\end{bmatrix},
\]
where $\alpha_i = a_{i}^{2} + \lambda_{i}^2$, $i=1, \ldots, n$. Therefore, the eigenvalues of $C C^T$ are $\alpha_i$, $i=1, \ldots, n$. The values  $a_i$ can be set so that they are all strictly positive, and $\alpha_i$ for all $i=1, \ldots, n$  are equal to some $\lambda_0 > \lambda_{1}^{2}$. Then the spectrum of  $C^T C$ is $0$ with multiplicity $n$, and $\lambda_0$ also with multiplicity $n$. This implies that $q(G \vee \overline{K_n})=2$ and $MB(G \vee \overline{K_n}) \leq n$. Finally, the vertices in $\overline{K_n}$ form an independent set of size $n$, and so  by Statement (3) of Lemma 2.3, it follows that $MB(G \vee \overline{K_n}) = n$.
\end{proof}

The same proof can be used to prove the following result.

\begin{lemma}\label{lem:MohammadMethod2}
Let $G$ be a connected graph on $n\geq 2$ vertices. Then
$q(G \vee \overline{K}_{n-1})=2$ and $MB(G \vee \overline{K}_{n-1}) = n-1$. 
\end{lemma}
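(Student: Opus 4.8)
The plan is to follow the proof of Lemma~\ref{lem:MohammadMethod1} essentially verbatim, changing only one input: instead of demanding $n$ \emph{positive} distinct eigenvalues for the matrix in $\mathcal{S}(G)$, I would prescribe one of them to be zero. Concretely, since $G$ is connected, Theorem~\ref{thm:MS} produces a matrix $A\in\mathcal{S}(G)$ with distinct eigenvalues $\lambda_1>\lambda_2>\cdots>\lambda_{n-1}>\lambda_n=0$ and entry-wise nonzero unit eigenvectors $v_1,\ldots,v_n$. Writing $A=V^T\Lambda V=U^TU$ with $U=\Lambda^{1/2}V$ and $\Lambda=\operatorname{diag}(\lambda_1,\ldots,\lambda_{n-1},0)$, the factor $\Lambda^{1/2}$ annihilates the last row of $U$. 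Let $\widetilde{U}$ denote the $(n-1)\times n$ matrix consisting of the remaining (entry-wise nonzero) rows; discarding the zero row does not change the Gram matrix, so $\widetilde{U}^{\,T}\widetilde{U}=A$, while $\widetilde{U}\,\widetilde{U}^{\,T}=\operatorname{diag}(\lambda_1,\ldots,\lambda_{n-1})$.

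Next I would set $C=\begin{bmatrix}D & \widetilde{U}\end{bmatrix}$, where $D$ is an $(n-1)\times(n-1)$ diagonal matrix with nonzero diagonal entries $a_1,\ldots,a_{n-1}$, so that $C$ is $(n-1)\times(2n-1)$. The Gram matrix
\[
C^TC=\begin{bmatrix} D^2 & D\widetilde{U}\\ \widetilde{U}^{\,T}D & A\end{bmatrix}
\]
lies in $\mathcal{S}(G\vee\overline{K}_{n-1})$: the block $A$ realizes $G$, the diagonal block $D^2$ realizes the independent set $\overline{K}_{n-1}$, and the block $D\widetilde{U}$ is entry-wise nonzero (as $\widetilde{U}$ is entry-wise nonzero and each $a_i\neq0$), realizing every join edge. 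Because the rows of $C$ are mutually orthogonal, $CC^T=D^2+\widetilde{U}\,\widetilde{U}^{\,T}$ is diagonal with entries $a_i^2+\lambda_i$; choosing $a_i=\sqrt{\lambda_0-\lambda_i}$ for a fixed $\lambda_0>\lambda_1$ makes $CC^T=\lambda_0 I_{n-1}$.

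Since $C^TC$ and $CC^T$ share the same nonzero eigenvalues, it follows that $\sigma(C^TC)=\{0^{(n)},\lambda_0^{(n-1)}\}$, whence $q(G\vee\overline{K}_{n-1})=2$ and $MB(G\vee\overline{K}_{n-1})\leq n-1$. For the matching lower bound, the $n-1$ vertices of $\overline{K}_{n-1}$ form an independent set, so Statement~(3) of Lemma~\ref{lem:cupvee} forces $MB(G\vee\overline{K}_{n-1})\geq n-1$, giving equality.

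The \textbf{main obstacle} is really just the one conceptual change from the previous proof: justifying that a zero eigenvalue may be prescribed while keeping all eigenvectors entry-wise nonzero. This is exactly the strength of Theorem~\ref{thm:MS}, which permits \emph{arbitrary} distinct real eigenvalues. Everything downstream is rank bookkeeping: forcing $\lambda_n=0$ lowers $\operatorname{rank}(A)$ from $n$ to $n-1$, which is precisely what collapses the second graph from $\overline{K}_{n}$ to $\overline{K}_{n-1}$ and shifts the multiplicity pattern from $[n,n]$ to $[n,n-1]$.
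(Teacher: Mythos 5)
Your proposal is correct and follows essentially the same route as the paper: both proofs invoke Theorem~\ref{thm:MS} to prescribe the distinct spectrum $\lambda_1>\cdots>\lambda_{n-1}>\lambda_n=0$ with nowhere-zero eigenvectors, form the $(n-1)\times n$ entry-wise nonzero factor from the first $n-1$ scaled rows of $V$, border it with a nonzero diagonal block $D$, and compare $C^TC$ with $CC^T$ to equalize the nonzero eigenvalues. Your version is in fact slightly cleaner, since you correctly record the row norms as $a_i^2+\lambda_i$ and state the independent-set lower bound explicitly.
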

\begin{proof}
As in the proof of the previous lemma, there exists a matrix $A \in \mathcal{S}(G)$ with eigenvalues
$$\lambda_1 > \lambda_2 > \cdots > \lambda_{n-1} > \lambda_n =0$$
and corresponding entry-wise nonzero unit eigenvectors $v_1, \ldots, v_n$ such that
\begin{eqnarray*}
A &= & V^T \Lambda V=U^T U,
\end{eqnarray*}
where $V^T=[v_1, \ldots, v_n]$, $\Lambda = \operatorname{diag}(\lambda_1, \ldots, \lambda_{n-1}, 0)$, $U = D V[1, \ldots, n-1 \mid 1, \ldots, n]$, and $D = \operatorname{diag}(\sqrt{\lambda_1}, \ldots, \sqrt{\lambda_{n-1}})$. Hence the rows of $U$ are orthogonal since $V$ is unitary. Let $C$ be the   $(n-1) \times (2n-1)$ matrix $C = \begin{bmatrix}
D & U
\end{bmatrix}$. 
Since $U$ is an entry-wise nonzero matrix, as long as each $a_i$ is also nonzero, the matrix 
\[
C^T C = \begin{bmatrix}
D^2 & D U\\
U^T D & U^T U
\end{bmatrix} \in \mathcal{S}(G \vee \overline{K}_{n-1}).
\]
Further, the rows of $C$ are orthogonal and so 
\[
C C^T = 
\begin{bmatrix}
\alpha_1 & 0 & \dots & 0   \\ 
0 & \alpha_2 & \dots & 0   \\ 
\vdots  & & \ddots & \vdots\\
0 & 0 &\dots & \alpha_{n-1}    \\ 
\end{bmatrix},
\]
where $\alpha_i = a_{i}^{2} + \lambda_{i}^2$, $i=1, \ldots, n-1$. Similar to the proof of the previous lemma, the spectrum of $C^T C$ is $0$ with multiplicity $n$, and $\lambda_0$ with multiplicity $n-1$. This implies that $q(G \vee \overline{K_n})=2$ and $MB(G \vee \overline{K}_{n-1}) = n-1$.
\end{proof}

\section{Constructions}\label{sec:construction}

Corollary~\ref{cor:canonical} provides an infinite family of graphs in $MP([n-k,k])$ for various values of $n$ and $k$. Corollary~\ref{cor:canonical} gives a complete characterization of graphs with $MP([n-k,k])$ for 
$k=2$, but not for any larger value of $k$. 
In this section, we consider graphs that are in $MP([n-k,k])$ but not covered in Corollary~\ref{cor:canonical}. First, we consider a direct construction of matrices corresponding to some of the graphs in Corollary~\ref{cor:canonical}. 

\begin{example}\label{cor:construction3}
Let
\[
G = (K_{a_1} \cup K_{b_1} \cup K_{c_1}) \vee (K_{a_2} \cup K_{b_2} \cup K_{c_2}) 
\]
with $a_i,b_i,c_i>1$, so that $q(G) = 2$ and $MB(G) = k$ by  Corollary~\ref{cor:canonical}. For $i=1,2$ and
$t_i \in \mathbb{R}$ to be determined, set 
\begin{align*}
v_{1,i}&=  \big[
        \underbrace{ \frac{t_i}{\sqrt{a_i}}, \dots,  \frac{t_i}{\sqrt{a_i}}}_{a_i \textrm{ times }}, \;
        \underbrace{ \frac{1}{\sqrt{b_i}},  \dots, \frac{1}{\sqrt{b_i}} }_{b_i \textrm{ times }},  
        \underbrace{ \frac{-t_i}{\sqrt{c_i}(t_i+1)}, \dots ,  \frac{-t_i}{\sqrt{c_i}(t_i+1)}}_{c_i \textrm{ times }}  
         \big]^T \\
v_{2,i}&=  \big[ 
        \underbrace{ \frac{-t_i}{\sqrt{a_i}(t_i+1)}, \dots ,  \frac{-t_i}{\sqrt{a_i}(t_i+1)}}_{a_i \textrm{ times }}, 
        \underbrace{ \frac{t_i}{\sqrt{b_i}}, \dots,  \frac{t_i}{\sqrt{b_i}}}_{b_i \textrm{ times }}, \;
        \underbrace{ \frac{1}{\sqrt{c_i}},  \dots, \frac{1}{\sqrt{c_i}} }_{c_i \textrm{ times }}  
         \big]^T \\
v_{3,i}&=  \big[ 
        \underbrace{ \frac{1}{\sqrt{a_i}},  \dots, \frac{1}{\sqrt{a_i}} }_{a_i \textrm{ times }},  
        \underbrace{ \frac{-t_i}{\sqrt{b_i}(t_i+1)}, \dots ,  \frac{-t_i}{\sqrt{b_i}(t_i+1)}}_{b_i \textrm{ times }}, 
        \underbrace{ \frac{t_i}{\sqrt{c_i}}, \dots,  \frac{t_i}{\sqrt{c_i}}}_{c_i \textrm{ times }} \;
         \big]^T. \\
\end{align*}
 Clearly 
\[
\| v_{1,i} \| =\| v_{2,i} \| =\| v_{3,i} \| = t_i^2+1+\frac{t_i^2}{(t_i+1)^2}
\]
and the three vectors are pairwise orthogonal for each $i$.

Now, form three overall  vectors by concatenation, so
\[
v_1 = \begin{bmatrix}v_{1,1} \\ v_{1,2}\end{bmatrix}, \quad v_2 = \begin{bmatrix}v_{2,1} \\ v_{2,2}\end{bmatrix}, \quad v_3= \begin{bmatrix}v_{3,1} \\ v_{3,2}\end{bmatrix}. 
\]
These three vectors all have the same norm and are pairwise orthogonal. Let $A = I - 2(v_1 v_1^T+v_2 v_2^T+ v_3 v_3^T)$.

Let $x,y$ be two vertices in $G$, then the $(x,y)$-entry of $A$ is given by
\begin{align*}
[A]_{x,y} =
\begin{cases}
\frac{1}{a_i} \left( t_i^2+1+\frac{t_i^2}{(t_i+1)^2} \right) & \textrm{ if } x \in V(K_{a_i}),\textrm{ and } y \in V(K_{a_i}); \\
\frac{1}{b_i} \left( t_i^2+1+\frac{t_i^2}{(t_i+1)^2} \right) & \textrm{ if }  x \in V(K_{b_i}),\textrm{ and } y \in V(K_{b_i}); \\
\frac{1}{c_i} \left( t_i^2+1+\frac{t_i^2}{(t_i+1)^2} \right)  &\textrm{ if }  x \in V(K_{c_i}),\textrm{ and } y \in V(K_{c_i}). \\
\end{cases}
\end{align*}

If $x$ and $y$ are both in $K_{a_i} \cup K_{b_i} \cup K_{c_i}$, but not both in the same clique (induced complete graph), then $[A]_{x,y} = 0$.

Assume that $x \in K_{a_1}$, then
\begin{align*}
[A]_{x,y} =
\begin{cases}
\frac{1}{\sqrt{a_1}\sqrt{a_2}} \left( t_1t_2+\frac{t_1}{(t_1+1)}\frac{t_2}{(t_2+1)} + 1 \right) 
  & \textrm{ if } y \in V(K_{a_2}); \\
\frac{1}{\sqrt{a_1}\sqrt{b_2}} \left( t_1 +\frac{-t_1t_2}{(t_1+1)} + \frac{-t_2}{t_2+1} \right) 
  & \textrm{ if } y \in V(K_{b_2}); \\
\frac{1}{\sqrt{a_1}\sqrt{c_2}} \left( \frac{-t_1t_2}{t_2+1} +\frac{-t_1}{(t_1+1)} + t_2 \right)
  & \textrm{ if } y \in V(K_{c_2}). \\
\end{cases}
\end{align*}
If $t_1$ and $t_2$ are distinct and positive, these are all
nonzero. Similarly we can show that for any $x \in K_{a_1} \cup
K_{b_1} \cup K_{c_1}$ and any $y \in K_{a_2} \cup K_{b_2} \cup
K_{c_2}$ that $[A]_{x,y}$ is not equal to zero. Therefore,
$A \in \mathcal{S}(G).$

\end{example}

\begin{theorem}\label{thm:generalmat}
  If there are $\ell$ matrices $M_i$ of order $k$ with the
  following properties
\begin{enumerate}
\item the rows of $M_i$ are orthogonal, and
\item for $i \neq j$ all the entries of $M_i^TM_j$ are nonzero,
\end{enumerate}
then the graph
\[
G = \bigvee_{j=1}^\ell( \cup_{i=1}^k K_{a_{i,j}}) 
\]
has multiplicity bipartition $[n-k,k]$ provided all $a_{i,j} \geq 1$.
\end{theorem}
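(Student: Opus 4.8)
The plan is to generalize the block construction used in Theorem~\ref{thm:joinpairk} from two matrices to $\ell$ matrices, while simultaneously incorporating the clique-sizes $a_{i,j}$ in the spirit of Lemma~\ref{Gjoinclique}. The target is a positive semidefinite matrix $C \in \mathcal{S}(G)$ of rank exactly $k$ with two distinct eigenvalues, which by Lemma~\ref{lem:vectorconst} certifies $G \in MP([n-k,k])$; the matching lower bound $MB(G) \geq k$ then follows from the presence of an independent set of size $k$ together with Statement~(3) of Lemma~\ref{lem:cupvee}.

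First I would observe that the hypothesis ``the rows of $M_i$ are orthogonal'' means $M_i M_i^T = \Lambda_i$ is a diagonal matrix; after scaling the rows of each $M_i$ (which preserves both hypotheses, since scaling rows of $M_i$ scales rows of $M_i^T M_j$ and cannot turn a nonzero entry into zero) I may assume each $M_i$ has orthonormal rows, so $M_i M_i^T = I_k$. Each block $K_{a_{i,j}}$ contributes an all-ones weight, so I would replace the $r$-th column of $M_j$ by $a_{r,j}$ identical copies of itself scaled by $1/\sqrt{a_{r,j}}$, forming an inflated matrix $\widehat{M}_j$ whose columns are indexed by the actual vertices inside the $j$-th join factor. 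This inflation keeps $\widehat{M}_j \widehat{M}_j^T = I_k$ (each copied-and-scaled column group of $r$ copies contributes $(1/a_{r,j}) \cdot a_{r,j} = 1$ back into the Gram sum along each row), it turns a nonzero entry of $M_i^T M_j$ into an entire nonzero all-ones-type block in $\widehat{M}_i^T \widehat{M}_j$ so that the off-diagonal join structure is fully populated, and within a single factor $\widehat{M}_j^T \widehat{M}_j$ it places zeros exactly between distinct cliques $K_{a_{r,j}}, K_{a_{r',j}}$ (forcing non-adjacency across parts) and nonzero constant blocks inside each clique.

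Next I would assemble the $k \times n$ matrix $\widehat{M} = [\widehat{M}_1 \mid \widehat{M}_2 \mid \cdots \mid \widehat{M}_\ell]$ and set $C = \widehat{M}^T \widehat{M}$. By construction $C$ is positive semidefinite of rank at most $k$; the diagonal blocks realize the $\ell$ copies of $\cup_{i=1}^k K_{a_{i,j}}$ with the correct zero/nonzero pattern, and the off-diagonal blocks $\widehat{M}_i^T \widehat{M}_j$ (for $i \neq j$) are entry-wise nonzero by hypothesis~(2) combined with the inflation, which is precisely the edge-pattern of the join. Since each $\widehat{M}_j$ has orthonormal rows, $\widehat{M} \widehat{M}^T = \sum_{j=1}^\ell I_k = \ell I_k$, so $\widehat{M}$ has full row rank $k$ and $C$ has rank exactly $k$; moreover $C^2 = \widehat{M}^T (\widehat{M}\widehat{M}^T) \widehat{M} = \ell\, C$, giving $\sigma(C) = \{0^{(n-k)}, \ell^{(k)}\}$. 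Hence $C \in \mathcal{S}(G)$ exhibits the bipartition $[n-k,k]$, so $G \in MP([n-k,k])$ and $MB(G) \leq k$.

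The main obstacle I anticipate is the bookkeeping in the inflation step: verifying that after replacing each column by $a_{r,j}$ scaled copies, the identity $\widehat{M}_j \widehat{M}_j^T = I_k$ is genuinely preserved, and that the within-factor zero pattern lands exactly on pairs of vertices in distinct parts (so that $G$ really is the stated join of disjoint-clique unions and not some graph with extra or missing edges). One must also confirm that hypothesis~(2), ``all entries of $M_i^T M_j$ nonzero,'' survives inflation uniformly — which it does, since inflating columns only copies and positively scales existing entries — and then invoke Statement~(3) of Lemma~\ref{lem:cupvee} on the size-$k$ independent set (one vertex from each of the $k$ parts of any single factor) to upgrade $MB(G) \leq k$ to equality.
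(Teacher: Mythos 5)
Your construction is the same one the paper uses (the paper's proof is a three--line sketch of exactly this: give every vertex of $K_{a_{i,j}}$ the value $\tfrac{1}{\sqrt{a_{i,j}}}[M_j]_{h,i}$ in the $h$-th vector and take the Gram matrix), and your write-up of the inflation, the rank computation, and the lower bound via the size-$k$ independent set is more complete than the paper's. The problem is the one step where you go beyond the paper: the reduction to orthonormal rows. Your justification --- ``scaling rows of $M_i$ scales rows of $M_i^T M_j$ and cannot turn a nonzero entry into zero'' --- is false. Replacing $M_i$ by $D_iM_i$ and $M_j$ by $D_jM_j$ turns $M_i^TM_j$ into $M_i^T D_iD_j M_j$, which is neither a row nor a column rescaling of the product; each entry becomes a differently weighted sum and can vanish. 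Concretely, $M_1=\left[\begin{smallmatrix}1&1\\2&-2\end{smallmatrix}\right]$ and $M_2=\left[\begin{smallmatrix}1&-1\\-1&-1\end{smallmatrix}\right]$ both have orthogonal rows and $M_1^TM_2=\left[\begin{smallmatrix}-1&-3\\3&1\end{smallmatrix}\right]$ is entry-wise nonzero, yet after normalizing the rows the product becomes a nonzero multiple of $\left[\begin{smallmatrix}0&-1\\1&0\end{smallmatrix}\right]$. So hypothesis (2) does not survive your preprocessing.

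This is not a cosmetic issue, because the reduction you attempted really is needed: if $M_jM_j^T$ is a non-scalar diagonal matrix, then (a) the nonzero eigenvalues of your $C$ are $\sum_j\|\mathrm{row}_h(M_j)\|^2$, which need not coincide, so $C$ can have more than two distinct eigenvalues; and (b) the columns of $M_j$ need not be orthogonal (row-orthogonality of a square matrix forces column-orthogonality only when the row norms are equal), so the within-factor block $\widehat{M}_j^T\widehat{M}_j$ acquires nonzero entries between distinct cliques $K_{a_{r,j}}$ and $K_{a_{r',j}}$ and $C\notin\mathcal{S}(G)$. You correctly sensed that the hypotheses must be tightened to ``each $M_i$ is a scalar multiple of an orthogonal matrix'' --- note that every example matrix displayed after the theorem has this property, and the paper's own proof silently assumes it --- but that strengthening has to be put into the hypothesis (or derived some other way); it cannot be obtained for free by normalizing rows. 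Under that reading of hypothesis (1), your normalization is just a global rescaling of each $M_i$, which genuinely preserves (2), and the remainder of your argument is correct and complete.
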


\begin{proof}
 For $h=1,2,\dots,k$, construct vectors $v_h$ as follows:  $v_h =[v_{h,1},
  v_{h,2}, \dots, v_{h,\ell}]^T$, where each vector $v_{h,j}$ represents the
  vertices in a $\cup_{i=1}^k K_{a_{i,j}}$. The vertices in $K_{a_{i,j}}$ all receive the same value,
  namely $\frac{1}{\sqrt{a_{i,j}}}[M_j]_{h,i}$. Following the approach in Lemma~\ref{cor:construction3}, set $V= [v_1, \ldots, v_k]$, then $M=V^TV \in \mathcal{S}(G)$ and the spectrum of $M$ is $\{0^{(n-k)}, \|v_i\|^{(k)}\}$.  
\end{proof}

In the following, we give examples of matrices that satisfy the conditions of  Theorem~\ref{thm:generalmat} with order less than or equal to five, where $t>1$.
Such a  matrix of order 2  is \[
M_t
=\left[ \begin{matrix}
1 & t \\
-t & 1
\end{matrix} \right].
\]
Similarly, for orders 3, 4, and 5 we have, 
\[
M_t
=\left[\begin{matrix}
t & 1 & \displaystyle-\frac{t}{t+1} \\
\displaystyle-\frac{t}{t+1} & t & 1  \\
1 & \displaystyle-\frac{t}{t+1} & t \\
\end{matrix}\right],
\]
\[
M_t
= \left[ \begin{matrix}
t & -1 & \displaystyle\frac{1}{t} & 1\\
1 & t & -1&  \displaystyle\frac{1}{t}\\
\displaystyle \frac{1}{t} & 1 & t& -1 \\
-1 & \displaystyle \frac{1}{t} & 1 & t \\
\end{matrix}\right],
\]
\[
M_s
=\left[\begin{matrix}
p & -1 & s & r & 1\\
1 & p & -1 & s & r \\
r & 1 & p & -1 & s \\
s & r & 1 & p & -1 \\
-1 & s & r & 1 & p \\
\end{matrix} \right],
\]
with $p=\frac{-s^2+s-1}{s^2+2s}$ and $r = \frac{s}{s+1}$.

Note that in the construction in Lemma~\ref{cor:construction3}, the numerator
in the entries in the vectors come from the entries of the $3\times 3$ matrix $M_t$. This
method can easily be generalized.

The  following Lemma provides graphs with $MB(G) =3$ that are not listed in Corollary~\ref{cor:canonical}. 
\begin{lemma}\label{lem:const1}
  Let $H=K_{\alpha,\alpha}$ be a complete bipartite graph with $2\alpha \leq a_1$. Suppose 
\[
G= ( (K_{a_1}\backslash{H}) \cup K_{b_1}) \vee (K_{a_2} \cup K_{b_2}) \vee \dots \vee (K_{a_\ell} \cup K_{b_\ell}),
\]
with $a_i,b_i > 2$ and $\ell\geq 2$. Then $q(G) =2$ and $MB(G) =3$.
\end{lemma}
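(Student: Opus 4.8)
The plan is to prove $MB(G)\ge 3$ and $MB(G)\le 3$ separately; since $G$ is a join it is connected and has edges, so the upper-bound construction will also give $q(G)=2$.

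For the lower bound I would apply statement~(3) of Lemma~\ref{lem:cupvee}. Let $A,B\subseteq V(K_{a_1})$ be the two colour classes of the deleted copy of $H=K_{\alpha,\alpha}$, so that after deleting the edges of $H$ no vertex of $A$ is adjacent to a vertex of $B$ inside $K_{a_1}\setminus H$. Choosing one vertex of $A$, one of $B$, and one of $K_{b_1}$ produces an independent set of size $3$: the first two are non-adjacent because the edges of $H$ were removed, and the third is non-adjacent to them because $K_{b_1}$ is a distinct component of the first factor. A graph with $MB=k$ has no independent set of size $k+1$, so $MB(G)\ge 3$. (One checks that $3$ is in fact the independence number, so the bound is tight.)

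For the upper bound I would construct, following Lemma~\ref{lem:vectorconst} and the method of Theorem~\ref{thm:generalmat}, an assignment $x\mapsto r_x\in\mathbb{R}^3$ with $\langle r_x,r_y\rangle=0$ exactly when $x\not\sim y$ and with $\sum_x r_xr_x^{T}=\lambda I_3$; the matrix $\sum_x r_xr_x^{T}$ then lies in $\mathcal{S}(G)$ and has spectrum $\{0^{(n-3)},\lambda^{(3)}\}$. I give every block of mutually adjacent vertices a single direction. In the first factor these blocks are $A$, $B$, the set $C$ of the $a_1-2\alpha$ vertices of $K_{a_1}\setminus H$ that are adjacent to everything, and $D:=K_{b_1}$; they receive directions $\vec a,\vec b,\vec c,\vec d$ chosen so that $\vec a\perp\vec b$, $\vec d\perp\vec a,\vec b,\vec c$, and $\vec c$ is non-orthogonal to both $\vec a$ and $\vec b$. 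Since $\vec d$ is orthogonal to all of $\vec a,\vec b,\vec c$, these first three vectors are forced to be coplanar with $\vec d$ normal to their plane. Each remaining factor $K_{a_j}\cup K_{b_j}$ is to receive a pair of orthogonal directions, one per clique.

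The main obstacle is to enforce $\sum_x r_xr_x^{T}=\lambda I_3$ in the presence of $C$. Because $C$ is adjacent to the mutually non-adjacent blocks $A$ and $B$, the vector $\vec c$ is a genuine in-plane combination of $\vec a$ and $\vec b$, so the first factor's contribution $S_1=\vec a\vec a^{T}+\vec b\vec b^{T}+\vec c\vec c^{T}+\vec d\vec d^{T}$ necessarily carries a nonzero off-diagonal entry in the plane, and this cannot be cancelled by a single rank-one (constant-per-clique) term from another factor without forcing a forbidden orthogonality. I would resolve this by setting $\|\vec d\|^2$ equal to the smaller eigenvalue $\mu_2$ of the in-plane Gram matrix $\vec a\vec a^{T}+\vec b\vec b^{T}+\vec c\vec c^{T}$, which makes $\mu_2$ a repeated eigenvalue of $S_1$ with a two-dimensional eigenspace $V$, and by taking $\lambda=\mu_1$, the larger eigenvalue. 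Then $\lambda I_3-S_1=(\mu_1-\mu_2)\Pi_V$, where $\Pi_V$ is the orthogonal projection onto $V$, so each remaining factor may place its two cliques along a rotated orthonormal basis of $V$ with balanced norms, contributing a scalar multiple of $\Pi_V$; choosing these scalars to sum to $\mu_1-\mu_2$ makes the whole frame equal to $\mu_1 I_3$. Picking the rotation angles distinct and generic (as permitted by Lemma~\ref{lem:Barrett}) keeps every inner product between directions of different factors, and between those directions and $\vec a,\vec b,\vec c,\vec d$, nonzero, which is exactly adjacency across factors and to $K_{b_1}$. The degenerate case $a_1=2\alpha$ (so $C=\emptyset$) is easier: then $A,B,D$ are mutually non-adjacent and one may take $\vec a,\vec b,\vec d$ mutually orthogonal of equal length, so $S_1$ is already scalar. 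Verifying the remaining within-factor orthogonalities then completes the proof that $MB(G)\le 3$ and $q(G)=2$.
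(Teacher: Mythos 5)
Your overall framework is the same as the paper's: realize $G$ by a map $x\mapsto r_x\in\mathbb{R}^3$ whose Gram matrix has the zero pattern of $\overline{G}$ and which forms a tight frame, i.e.\ $\sum_x r_xr_x^T=\lambda I_3$ (equivalently, a $3\times n$ matrix $U$ with orthogonal equal-norm rows and $U^TU\in\mathcal{S}(G)$). Your lower bound via the independent set $\{$one vertex of $A$, one of $B$, one of $K_{b_1}\}$ and Lemma~\ref{lem:cupvee}(3) is correct and is a point the paper leaves implicit. Your main case ($a_1>2\alpha$, so $C\neq\emptyset$) is essentially sound: the eigenvalue-degeneracy trick ($\|\vec d\|^2=\mu_2$, remaining factors contributing multiples of $\Pi_V$) works, provided you also arrange $\alpha\|\vec a\|^2\neq\alpha\|\vec b\|^2$ so that the $\mu_2$-eigenvector $\vec e$ of the in-plane Gram matrix is not orthogonal to $\vec c$ (otherwise vectors in $V$ could be orthogonal to $\vec c$, killing the required adjacency to $C$); and the appeal to Lemma~\ref{lem:Barrett} is not really an application of that lemma, just a routine genericity argument in the $2$-plane $V$.

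The genuine gap is the boundary case $a_1=2\alpha$ (allowed by the hypothesis $2\alpha\leq a_1$), where $C=\emptyset$ and the first factor degenerates to three pairwise non-adjacent cliques $K_\alpha\cup K_\alpha\cup K_{b_1}$. Your ``one direction per clique'' scheme then forces $\vec a,\vec b,\vec d$ to be three mutually orthogonal directions spanning $\mathbb{R}^3$, so $S_1$ is diagonal in that basis. If $\ell=2$, the single remaining factor contributes $m_{a_2}\vec p\vec p^T+m_{b_2}\vec q\vec q^T$ with $\vec p\perp\vec q$, a matrix of rank at most $2$; for the total to be $\lambda I_3$ the difference $\lambda I_3-S_1$ must therefore have a zero diagonal entry, which forces $\vec p,\vec q$ into a coordinate $2$-plane and hence orthogonal to one of $\vec a,\vec b,\vec d$ --- contradicting the required adjacency of factor~$2$ to all of $A$, $B$, $K_{b_1}$. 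So your proposed fix (``take $\vec a,\vec b,\vec d$ of equal length so $S_1$ is scalar'') cannot work: no assignment constant on cliques realizes this subcase. The paper avoids the obstruction precisely by \emph{not} assigning a single direction per clique: its third row $v_3$ takes values $+\beta$ on one side of the deleted $H$, $-\beta$ on the other, and $\pm a$ on two distinguished vertices of $K_{a_2}$, so vertices inside one clique receive different vectors and the rank/support constraints above do not apply. To repair your proof you would need to allow such within-clique splittings (or restrict the lemma to $a_1>2\alpha$).
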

\proof Let $v_1=[v_{1,1}, v_{1,2},\dots, v_{1,l}]^T$, $v_2=[v_{2,1}, v_{2,2}, \dots, v_{2,l}]^T$, and 
$v_3=[v_{3,1}, v_{3,2}, \dots, v_{3,l}]^T$, where
%
\begin{eqnarray*}
v_{1,i}&=&\bmatrix \smash[b]{\underbrace{\begin{matrix}1, &   \mkern-11mu\dots, &  \mkern-11mu1,\end{matrix}}_{a_i\text{ times}}} &  \smash[b]{\underbrace{\begin{matrix} \mkern-11mu-\displaystyle \sqrt{\frac{a_i}{b_i}}w_i, &  \mkern-11mu\dots, &  \mkern-11mu-\displaystyle \sqrt{\frac{a_i}{b_i}}w_i\end{matrix}}_{b_i\text{ times}}}\endbmatrix^T\\\\\\
v_{2,i}&=&\bmatrix \smash[b]{\underbrace{\begin{matrix}w_i, &  \mkern-11mu\dots, &  \mkern-11muw_i,\end{matrix}}_{a_i\text{ times}}} &  \smash[b]{\underbrace{\begin{matrix} \mkern-11mu\displaystyle \sqrt{\frac{a_i}{b_i}}, & \mkern-11mu\dots, & \displaystyle  \mkern-11mu\sqrt{\frac{a_i}{b_i}}\end{matrix}}_{b_i\text{ times}}}\endbmatrix^T\\\\\\
v_{3,1}&=&\bmatrix 
\smash[b]{\underbrace{\begin{matrix}\beta, & \mkern-11mu\dots, \beta,\end{matrix}}_{\alpha\text{ times}}} &  \smash[b]{\underbrace{\begin{matrix}-\beta, & \mkern-11mu\dots, -\beta,\end{matrix}}_{\alpha\text{ times}}} &  \smash[b]{\underbrace{\begin{matrix} 0, &\dots, & 0\end{matrix}}_{a_1-2\alpha\text{ times}}} 
\endbmatrix^T\\\\
v_{3,2}&=&\bmatrix a, & -a ,& 0,& \dots, & 0\endbmatrix^T,
\end{eqnarray*}
where $\beta = \sqrt{1+w_1^2}$ and all vectors $v_{3,i}, i\geq 3$ are zero vectors. For $i>2$, if we choose the value of $w_i$   large enough so that $\sum_{i=2}^{\ell} a_i(1+w_i^2)-\alpha (1+w_1^2) >0$, then setting $a$ so that
\[
a^2=\displaystyle \frac{\sum_{i=1}^{\ell} a_i(1+w_i^2)-2\alpha (1+w_1^2)}{2}
\] 
results in a vector $v_3$ that has the same norm as vectors $v_1$ and $v_2$. The vectors $v_1^T, v_2^T,$ and $v_3^T$ form orthogonal rows of a $3\times |V(H)|$ matrix $U$, where the orthogonality of the  columns of $U$ represents the edges and non-edges of $H$. Thus, $U^TU\in \mathcal{S}(G)$, which completes the proof. \qed

This method can be extended to the graphs covered by Theorem~\ref{thm:generalmat}.

Using a similar approach, we can also remove edges across the join operation. Define an operation 
$ ( K_{a_1} \cup K_{b_1}) \dot{\vee} (K_{a_2} \cup K_{b_2}) $ that is the join of $K_{a_1} \cup K_{b_1}$ and $K_{a_2} \cup K_{b_2}$ with two disjoint edges removed between $K_{a_1}$ and $K_{a_2}$.

\begin{lemma}\label{lem:const2}
Suppose $G= ( K_{a_1} \cup K_{b_1}) \dot{\vee} (K_{a_2} \cup K_{b_2}) \vee  \dots \vee (K_{a_\ell} \cup K_{b_\ell})$, 
with $a_i \geq 2$  for $i\geq 1$, $b_2 \geq2$, and $\ell\geq 2$. Then $q(G) =2$ and $MB(G) = 3$.
\end{lemma}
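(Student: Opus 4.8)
The plan is to construct an explicit matrix $C \in \mathcal{S}(G)$ of the form $C = U^T U$ where $U$ has $3$ orthogonal rows of equal norm, mirroring the construction in Lemma~\ref{lem:const1} but now encoding the two removed edges through the orthogonality of columns rather than through a deleted bipartite subgraph. The key structural observation is that $U^T U$ has a zero in position $(x,y)$ precisely when columns $x$ and $y$ of $U$ are orthogonal; thus edges and non-edges of $G$ translate directly into non-orthogonality and orthogonality of pairs of columns. Since $MB(G) = 3$ requires both $q(G) = 2$ (giving $MB(G) \le 3$ via a rank-$3$ construction) and a lower bound $MB(G) \ge 3$, I would handle the lower bound first: the graph contains $K_{a_1} \cup K_{b_1}$ joined to the rest, and one can exhibit an independent set of size $3$ (for instance, one vertex from each of $K_{a_1}, K_{b_1}$ together with a vertex from some $K_{a_j}$ that has had an edge removed), so by Statement~(3) of Lemma~\ref{lem:cupvee} we get $MB(G) \ge 3$; alternatively $\mrr+(G) \ge 3$ suffices by Lemma~\ref{lem:mmr+}.

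For the upper bound I would build three vectors $v_1, v_2, v_3 \in \mathbb{R}^{|V(G)|}$, each partitioned into blocks indexed by $K_{a_1} \cup K_{b_1}, K_{a_2} \cup K_{b_2}, \dots, K_{a_\ell} \cup K_{b_\ell}$. On the blocks for $i \ge 2$ (and partly on $i=1$) I would reuse the same pattern as in Lemma~\ref{lem:const1}: constant values on each clique of the form $\frac{1}{\sqrt{a_i}}[M]_{h,i}$ so that vertices within a clique are automatically adjacent and vertices in different cliques across a join are adjacent. The novelty is encoding the \emph{two removed} disjoint edges between $K_{a_1}$ and $K_{a_2}$: I would single out two vertices of $K_{a_1}$ and two of $K_{a_2}$ and add a correction term, supported only on those four coordinates, chosen so that the corresponding two $(x,y)$ entries of $U^T U$ vanish while all other cross entries remain nonzero. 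This is exactly the role played by $v_{3,1}$ and $v_{3,2}$ in Lemma~\ref{lem:const1}; here $v_3$ (and possibly a small adjustment to $v_1, v_2$ on those coordinates) carries the extra structure needed to force orthogonality of the two designated pairs of columns.

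The main steps, in order, are: (i) write down the block vectors with the clique-constant pattern and the localized correction on the four special vertices; (ii) verify pairwise orthogonality of $v_1, v_2, v_3$, which decouples into the orthogonality already known for the clique pattern plus the orthogonality of the correction terms, the latter arranged to be self-contained; (iii) introduce free parameters (playing the role of $w_i$ and $a$ in Lemma~\ref{lem:const1}) and choose them so that $\|v_1\| = \|v_2\| = \|v_3\|$, which requires one norm-balancing equation solvable provided enough freedom remains in the large cliques; and (iv) check that every cross entry of $U^T U$ that should be nonzero (every preserved edge) is indeed nonzero, which amounts to showing the parameters can be chosen generically, e.g. distinct and positive as in Example~\ref{cor:construction3}. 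Then $U^T U \in \mathcal{S}(G)$ with spectrum $\{0^{(n-3)}, \|v_1\|^{2\,(3)}\}$, giving $q(G)=2$ and $MB(G) \le 3$.

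The hard part will be step~(iv) combined with the constraint in step~(iii): I must simultaneously force exactly two designated cross entries to be zero (the removed edges) while guaranteeing that all remaining cross entries between the two special cliques stay nonzero, and that the norm-balancing equation still admits a valid solution with the designated parameters distinct. Because the correction term is localized to four coordinates, the two "removed-edge" orthogonality conditions and the "preserved-edge" non-vanishing conditions involve the same handful of parameters, so one must check these conditions do not conflict — in particular that setting the two entries to zero does not accidentally annihilate a third entry. Verifying this non-degeneracy, likely by choosing the correction values (analogues of $\beta$ and $a$) and the clique weights so that the relevant expressions are nonzero for generic parameter choices, is the delicate computation, though it is routine in the same spirit as the entry calculations in Example~\ref{cor:construction3} and Lemma~\ref{lem:const1}.
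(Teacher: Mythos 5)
Your upper-bound construction is essentially the paper's: keep the $v_{1,i},v_{2,i}$ from Lemma~\ref{lem:const1} and add a third vector $v_3$ supported on a few coordinates of $K_{a_1}$ and $K_{a_2}$, with values $\pm\beta$ and $\pm(1+w_1w_2)/\beta$ chosen so that the two designated cross entries of $U^TU$ vanish while the remaining cross entries stay nonzero, and then balance $\|v_3\|$ against $\|v_1\|=\|v_2\|$ by varying $\beta$. The points you flag as delicate (the simultaneous vanishing/non-vanishing conditions and the norm equation) are exactly the ones the paper addresses, and your insistence that the correction be supported only on the four endpoints of the removed edges is sound: any support of $v_3$ inside $K_{b_2}$ risks destroying the required zeros between $K_{a_2}$ and $K_{b_2}$, since the $v_1,v_2$ contributions already cancel there.

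The genuine gap is your lower bound. The independent set of size $3$ you propose does not exist: a vertex of $K_{b_1}$ is adjacent to \emph{every} vertex of $K_{a_2}$ (the removed edges run only between $K_{a_1}$ and $K_{a_2}$), and the two removed edges are disjoint, so any two non-adjacent pairs share no common extension. A short case check shows the independence number of $G$ is exactly $2$, so Statement~(3) of Lemma~\ref{lem:cupvee} yields only $MB(G)\geq 2$, and your fallback claim $\mrr+(G)\geq 3$ is asserted without proof and is not easier to establish. The correct route to $MB(G)\geq 3$ is Statement~(2) of Lemma~\ref{lem:cupvee}: $G$ is not complete, and it is not of the form $(K_{p_1}\cup K_{q_1})\vee\cdots\vee(K_{p_k}\cup K_{q_k})$, because the complement of any such graph is a disjoint union of complete bipartite graphs, whereas the component of $\overline{G}$ containing the vertices of $K_{a_1}$ is $K_{a_1,b_1}\cup K_{a_2,b_2}$ with two extra disjoint edges joining the $K_{a_1}$ side to the $K_{a_2}$ side, which is connected and bipartite but not complete bipartite (it is missing $a_1a_2-2>0$ edges between those two parts). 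Hence $MB(G)\neq 1,2$, giving $MB(G)=3$ once the rank-$3$ construction is in place.
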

\proof
Use the same vectors $v_{1,i}$ and $v_{2,i}$ as in the proof of Lemma~\ref{lem:const1}. Set 
\[
v_{3,1} =\bmatrix 
\begin{matrix} \beta, & -\beta, & 0,  \dots, 0, & 0,  \dots, 0\end{matrix} 
\endbmatrix^T,
\]
where there are $a_1-2$ zeros followed by $b_1$ zeros,
and 
\[
v_{3,2}  =\bmatrix 
\begin{matrix} \frac{(1+w_1w_2)}{\beta},  -\frac{(1+w_1w_2)}{\beta}, & 0, &\mkern-11mu\dots, 0,  \end{matrix} &  
\begin{matrix}\frac{(1+w_1w_2)}{\beta},  -\frac{(1+w_1w_2)}{\beta}, & 0, & \mkern-11mu\dots, 0, \end{matrix} \endbmatrix^T,
\]
where there are $a_2-2$ zeros in the first group and $b_2-2$ zeros in the second group,
and let $v_{3,i} = \bf{0}$ for $i = 3,\dots,\ell$. Let $v_3$ be the vector formed by concatenating $v_{3,i}$ for $i =1,\dots,\ell$.

The norm of $v_3$ is $2\beta^2 + 4 \frac{(1+w_1w_2)^2}{\beta^2}$. This is a continuous function in $\beta^2$ and it takes values in the interval $((4+2\sqrt{2})\sqrt{1+w_1w_2},\infty)$. The norm of $v_1$ is at least $4+2w_1^2+2w_2^2$.
Since it is possible to choose $w_1$ and $w_2$ so that the norm of $v_1$ is larger than $(4+2\sqrt{2})\sqrt{1+w_1w_2}$, it is also possible to choose $\beta$ so that the norm of $v_3$ equals $\|v_1\|$.
\qed

This method can also be extended to the graph in Theorem~\ref{thm:generalmat}.  

From Theorem~\ref{thm:join} we know that $q(P_n \vee P_n)=2$ and Lemma \ref{cor:easylowerbounds} implies that $MB(P_n \vee P_n)\geq n-1$.
We consider a related graph that achieves this same lower bound. Let $P_n^2$ be the graph on $n$ vertices labeled by $1,2,\dots, n,1',2',\dots,n'$.
Vertices $i$ and $i'$ are adjacent for all $i\in \{1,\dots,n\}$.
If $i\in {2,\dots, n-1}$, then $i$ and $i'$ are adjacent to vertices $i-1,(i-1)',i+1,(i+1)'$.
Vertices $1$ and $1'$ are adjacent to vertices $2$ and $2'$. Vertices $n$ and $n'$ are adjacent to $n-1$ and $(n-1)'$. Note that $P_n^2$ is the strong product of $P_n$ and $P_2$.

\begin{lemma}
For any $n$, $q(P_n^2)=2$ and $MB(P_n^2) = n-1$.
\end{lemma}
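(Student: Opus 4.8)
The plan is to prove the two inequalities $MB(P_n^2)\ge n-1$ and $MB(P_n^2)\le n-1$ separately; since $P_n^2$ has at least one edge we have $q(P_n^2)\ge 2$, so the existence of a matrix in $\mathcal{S}(P_n^2)$ with two distinct eigenvalues will simultaneously give $q(P_n^2)=2$. Assume $n\ge 2$. For the lower bound I would simply exhibit a long induced path: among the unprimed vertices $1,2,\dots,n$, two of them $i,j$ are adjacent in $P_n^2$ exactly when $|i-j|=1$, so this vertex set carries precisely the edges of $P_n$ and has no chords. Hence $1,2,\dots,n$ is an induced path on $n$ vertices (length $n-1$), and Lemma~\ref{cor:easylowerbounds} yields $MB(P_n^2)\ge n-1$.

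For the upper bound I would invoke Lemma~\ref{lem:vectorconst}: it suffices to build a $2n\times(n-1)$ real matrix $U$ with orthonormal columns for which $M:=UU^T\in\mathcal{S}(P_n^2)$, since then $\sigma(M)=\{0^{(n+1)},1^{(n-1)}\}$ and $P_n^2\in MP([n+1,n-1])$, giving $MB(P_n^2)\le n-1$. Writing $r_v\in\mathbb{R}^{n-1}$ for the row of $U$ indexed by vertex $v$, the requirements become (i) $\sum_v r_v r_v^T=I_{n-1}$ (orthonormality of the columns of $U$, i.e.\ the rows form a tight frame) and (ii) $\langle r_u,r_v\rangle=0$ exactly for non-adjacent pairs $u\ne v$ and $\langle r_u,r_v\rangle\ne 0$ exactly for the edges of $P_n^2$.

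The key device is to make the rows \emph{locally supported}. Index the $n-1$ coordinates of $\mathbb{R}^{n-1}$ by the edges of $P_n$, and for each column $i$ let both $r_i$ and $r_{i'}$ be supported on the coordinates of the edges of $P_n$ incident with $i$, namely $\{i-1,i\}\cap\{1,\dots,n-1\}$. Then rows belonging to columns $i,j$ with $|i-j|\ge 2$ have disjoint supports, so they are automatically orthogonal and all the forbidden off-diagonal entries of $M$ vanish for free. Packaging the two rows of column $i$ (restricted to coordinates $\{i-1,i\}$) as a $2\times 2$ matrix $C_i$, condition (i) reduces to: the two columns of each $C_i$ are orthogonal (making $\sum_v r_v r_v^T$ diagonal), together with a simple linear chain of normalisations $\delta_\ell+\gamma_{\ell+1}=1$, where $\gamma_i,\delta_i$ denote the squared norms of the two columns of $C_i$. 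Condition (ii) for the surviving entries amounts to every entry of $C_i$ being nonzero (which supplies the horizontal and diagonal edges between consecutive columns) and the two \emph{rows} of $C_i$ being non-orthogonal (which supplies the rung edge $i\sim i'$).

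The one delicate point, which I expect to be the main obstacle, is the interaction between orthogonality of columns and non-orthogonality of rows inside each $C_i$: a $2\times 2$ matrix whose columns are orthogonal and of \emph{equal} norm is a scaled orthogonal matrix, whose rows are then orthogonal too, which would destroy the rung edge. This is avoided by forcing the two columns of $C_i$ to have \emph{unequal} norms $\gamma_i\ne\delta_i$ at an off-axis angle $\theta_i\in(0,\pi/2)$; one then computes $\langle r_i,r_{i'}\rangle=(\gamma_i-\delta_i)\sin\theta_i\cos\theta_i\ne 0$ while the columns remain orthogonal and all entries stay nonzero. A concrete choice such as $\gamma_i=\tfrac13,\ \delta_i=\tfrac23$ for interior columns, with the single-coordinate boundary columns $1$ and $n$ given squared norms $\tfrac23$ and $\tfrac13$, satisfies the chain $\delta_\ell+\gamma_{\ell+1}=1$ for all $\ell$. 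This produces $U$, hence $M\in\mathcal{S}(P_n^2)$ with the desired spectrum, establishing $MB(P_n^2)\le n-1$ and, combined with the lower bound, the equalities $q(P_n^2)=2$ and $MB(P_n^2)=n-1$.
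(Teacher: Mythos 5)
Your proposal is correct and follows essentially the same route as the paper: the lower bound comes from the induced path $1,2,\dots,n$ on the unprimed vertices together with Lemma~\ref{cor:easylowerbounds}, and the upper bound from Lemma~\ref{lem:vectorconst} applied to $n-1$ mutually orthogonal, locally supported vectors. The paper just writes down one explicit instance of your parametric family (the vector supported on positions $2i+1,\dots,2i+4$ with entries $1,1,2,-2$, normalized; in your notation $\gamma_i=4/5$, $\delta_i=1/5$), so your analysis of the $2\times 2$ blocks $C_i$ is a more general packaging of the same construction.
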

\begin{proof} Order the vertices of $P_n^2$ by $(1,1',2,2',\dots, n,n')$.
  For $i \in \{0,\dots,n-2\}$ let $u_i$ be the vector with the $(2i+1)$ and $(2i+2)$-entries equal to $1$, the $2i+3$ entry equal to $2$ and the $2i+4$ entry equal to $-2$., and all remaining entries set to zero. These vectors satisfy the conditions of Lemma~\ref{lem:vectorconst}, so $q(P_n^2)=2$ and $MB(P_n^2) \leq n-1$. The result follows since $P_n^2$ has an induced path of length $n-1$ and Lemma~\ref{cor:easylowerbounds}. 
\end{proof}

A graph is a \textsl{path of cliques} if its set of vertices can be partitioned into clusters, such that each cluster is a clique of size at least two, and the cliques form a path. A path of cliques whose clusters have at least two vertices can be obtained from $P_n^2$ by cloning vertices. The next result follows from Proposition~\ref{prop:cloning}.

\begin{corollary}
If $G$ is a path of cliques of size at least 4 with $k$ the longest induced path in $G$, then $q(G) = 2$ and $MB(G) = k-1$.
\end{corollary}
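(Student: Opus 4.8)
The plan is to reduce the corollary to the already-proved base case and the cloning operation. First I would pin down what ``path of cliques'' means here: the vertex set splits into clusters $C_1, C_2, \dots, C_m$, each a clique of size at least $4$ (by hypothesis), and two clusters $C_i, C_j$ are completely joined exactly when $|i-j|=1$ and have no edges between them otherwise. The key structural observation is that such a graph is obtained from $P_m^2$ by cloning vertices: $P_m^2$ is precisely the path of cliques in which every cluster is a clique of size exactly $2$ (the pair $\{i, i'\}$), and since cloning a vertex $v$ adds a new vertex adjacent to exactly $N[v]$, repeatedly cloning vertices inside a given cluster grows that cluster into a larger clique while preserving the complete join to the two neighbouring clusters and the absence of edges to all other clusters. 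Thus any path of cliques with all clusters of size at least $2$ arises from $P_m^2$ by a finite sequence of clonings, one per ``extra'' vertex in each cluster.

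Next I would invoke the preceding lemma, which gives $q(P_m^2)=2$ and $MB(P_m^2)=m-1$; equivalently $P_m^2 \in MP([2m-(m-1),\,m-1]) = MP([m+1,\,m-1])$. Applying Proposition~\ref{prop:cloning} once increases the total number of vertices by one while keeping the second part of the bipartition equal to $m-1$, so each clone produces a graph still in some $MP([n'-(m-1),\,m-1])$ with $q=2$. After performing all the clonings needed to build $G$ from $P_m^2$, I obtain $G \in MP([\,|V(G)|-(m-1),\,m-1\,])$, hence $q(G)=2$ and $MB(G) \le m-1$.

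To finish I need the matching lower bound $MB(G) \ge m-1$, and here I would use Lemma~\ref{cor:easylowerbounds}: if $MB(G)=k'$ then every induced path of $G$ has length at most $k'$. A path of cliques on $m$ clusters contains an induced path on $m$ vertices (pick one vertex from each cluster $C_1, \dots, C_m$; consecutive choices are adjacent, non-consecutive ones are not), which is an induced path of length $m-1$, forcing $MB(G) \ge m-1$. Combined with the upper bound this gives $MB(G)=m-1$. Finally I must reconcile the indexing with the statement's ``$k$ the longest induced path'': the longest induced path in a path of cliques on $m$ clusters has $m$ vertices, so its length is $k = m-1$ in the edge-count convention, and then $MB(G)=m-1=k$ as claimed (if ``length'' there means number of vertices $k=m$, the conclusion reads $MB(G)=k-1$, matching the corollary's ``$k-1$'').

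\textbf{Main obstacle.} The routine calculations are trivial; the real care is in the two structural claims. The first is verifying that \emph{every} path of cliques (with clusters of size $\ge 4$) genuinely arises from $P_m^2$ by cloning and that cloning never creates spurious adjacencies between non-consecutive clusters or destroys the complete join between consecutive ones --- this is straightforward from the definition $N[v]$ but must be stated. The second, and the point most likely to need explicit justification, is extracting the induced path of the correct length for the lower bound and then correctly translating between the ``length = number of edges'' and ``length = number of vertices'' conventions so that the off-by-one in $k$ versus $m$ matches the corollary's wording precisely.
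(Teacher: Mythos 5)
Your proposal is correct and follows essentially the same route the paper intends: the paper's proof consists precisely of the observation that a path of cliques with clusters of size at least two is obtained from $P_m^2$ by cloning, so Proposition~\ref{prop:cloning} together with the preceding lemma on $P_m^2$ gives the upper bound, and the induced path on one vertex per cluster with Lemma~\ref{cor:easylowerbounds} gives the matching lower bound. Your explicit verification that cloning preserves the cluster structure, and your care with the edge-count versus vertex-count convention for $k$, simply fill in details the paper leaves implicit.
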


\section{Open Problems}

In~\cite{LOS} a large number of graphs are shown to admit only two distinct eigenvalues.
In fact, they prove that many bipartite graphs $G$ have the property that $q(\overline{G}) = 2$. This gives  another large and diverse family of graphs with only 2 distinct eigenvalues, and for all of these graph it is interesting to consider the multiplicity bipartition. This family includes the complements of many trees, in particular they show that $q(\overline{P_n})=2$ if $n\geq 6$. The only results we have are that $MB(\overline{P_6}) = MB(\overline{P_7}) =3$.

\begin{question}
What is the multiplicity bipartition for the complement of a path on at least $8$ vertices?
\end{question}

The graphs in $MP([n-2,2])$ have been exactly characterized (see~\cite{BHL, CGMJ,MS,OS1}), from this characterization it can be seen that there are no trees $T$ with $MB(\overline{T}) = 2$. 

 One of the types of trees considered in \cite{LOS} are denoted by $S_{m,n}^r$ (these are called \textsl{type-one trees}).  The graph $S_{m,n}^r$ is formed by taking a path on $r$ vertices and adding $m$ leaves to one end point and $n$ leaves to the other end point. Alternately, these trees are formed by taking $K_{1,k} \cup K_{1,\ell}$ and added one additional edge to make the graph connected.  If the edge is added between two leaves in $K_{1,k} \cup K_{1,\ell}$, then the resulting graph is $S_{k-1,\ell-1}^4$; if the edge is added between a leaf and a non-leaf then the resulting graph is either $S_{k,\ell-1}^3$ or $S_{k-1,\ell}^3$; finally, if the edge is added between two non-leaves the resulting graph is $S_{k,\ell}^2$.

Note that if $T=S^k_{m,n}$ with $k=2,3,4$, then $\overline{T}$ is formed by taking $(K_1 \cup K_{m'}) \vee (K_1 \cup K_{n'})$
and removing a single edge across the join. In Lemma~\ref{lem:const2}, it is shown that in some cases two edges can be removed across the join. We conjecture that it is also possible to remove a single edge across the join in many cases and achieve the multiplicity bipartition $[n-3,3]$.

\begin{conjecture}
Let $T=S^k_{m,n}$ with $k=2,3,4$ and $m,n>1$. Then $q(\overline{T})=2$ and $MB(\overline{T}) = 3$.
\end{conjecture}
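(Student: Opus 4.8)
The plan is to prove the conjecture constructively, following the vector-construction technique (Lemma~\ref{lem:vectorconst}) that was used successfully in Lemmas~\ref{lem:const1} and~\ref{lem:const2}. The key structural observation, already recorded in the text, is that $\overline{T}$ for $T = S^k_{m,n}$ is obtained from $(K_1 \cup K_{m'}) \vee (K_1 \cup K_{n'})$ by deleting a \emph{single} edge across the join, where the two isolated vertices in each factor are the complements of the two ``hub'' vertices of the star. So the target graph is the $\ell=2$, single-edge-deletion analogue of Lemma~\ref{lem:const2}. The goal is to exhibit three pairwise orthogonal vectors $v_1,v_2,v_3 \in \mathbb{R}^n$ of equal norm whose Gram matrix $U^T U$ (with $U = [v_1\,v_2\,v_3]$) realizes exactly the adjacency pattern of $\overline{T}$: zero precisely on the one deleted cross-edge and on all within-part non-edges, nonzero on every genuine edge. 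By Lemma~\ref{lem:vectorconst} this immediately gives $\overline{T}\in MP([n-3,3])$, hence $q(\overline{T})=2$ and $MB(\overline{T})\le 3$.

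First I would set up coordinates so that each factor $K_1 \cup K_{m'}$ (respectively $K_1 \cup K_{n'}$) is assigned the ``clique-constant'' block structure of Lemmas~\ref{lem:const1}--\ref{lem:const2}: within each clique all vertices of a given $v_h$ receive a common value (a scalar multiple of $1/\sqrt{\text{clique size}}$), which automatically forces the within-part off-diagonal entries of $U^TU$ to behave correctly. I would reuse the two ``generic'' vectors $v_1, v_2$ of the proof of Lemma~\ref{lem:const2} essentially verbatim — built from a parameter pair $w_1, w_2$ so that the full cross-join pattern is nonzero — and then design $v_3$ to carry the single edge-removal. Concretely, $v_3$ should be supported only on the two vertices incident to the deleted edge (one isolated vertex $x$ in the first factor, one isolated vertex $y$ in the second), with $v_3 = \beta e_x + \gamma e_y$ for scalars $\beta,\gamma$ chosen to force $[U^TU]_{x,y} = \langle \text{col}_x, \text{col}_y\rangle = 0$ exactly at that pair while leaving every other cross-entry unaffected.

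The technical heart of the argument is a two-parameter balancing computation, just as in Lemma~\ref{lem:const2}. I would impose three scalar conditions simultaneously: (i) orthogonality $v_3 \perp v_1$ and $v_3 \perp v_2$, which relates $\beta$ and $\gamma$ to the values of $v_1,v_2$ at $x$ and $y$; (ii) the single cross-entry vanishing, $\langle \text{col}_x,\text{col}_y\rangle = 0$, which (since $v_1,v_2$ contribute a fixed amount $1 + w_1 w_2$ there, analogous to Lemma~\ref{lem:const2}) pins down the product $\beta\gamma$; and (iii) the norm-matching $\|v_3\| = \|v_1\| = \|v_2\|$. As in Lemma~\ref{lem:const2}, I would argue that after fixing $w_1, w_2$ appropriately large the function $\|v_3\|^2$ ranges over an unbounded interval as the free scale parameter varies, so it can be made to equal $\|v_1\|^2$; the hypotheses $m,n>1$ (equivalently $m',n'\ge 1$ with enough room, and the cliques being nontrivial) guarantee the supports are large enough that this is feasible. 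The remaining bookkeeping — verifying that \emph{every other} cross-entry and within-part entry of $U^TU$ is genuinely nonzero for generic positive $w_1,w_2$ — is routine and mirrors the nonvanishing verifications in Example~\ref{cor:construction3} and Lemma~\ref{lem:const1}.

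For the matching lower bound $MB(\overline{T}) \ge 3$, I would invoke Lemma~\ref{pairwisecommonneighbour} (Theorem~4.4 of \cite{AACFMN}) together with the independent-set bound of Lemma~\ref{lem:cupvee}(\ref{lem:cupvee:coclique}). It suffices to locate an independent set of size $3$ in $\overline{T}$ whose pairwise common neighbourhood behaves as required; since $T$ is a tree, $\overline{T}$ has large independent sets coming from cliques of $\overline{T}$ that correspond to the leaf-sets of $T$, and one checks $MB(\overline{T})\ge 3$ cannot drop to $1$ or $2$ because $\overline{T}$ is neither complete nor of the form characterized in Lemma~\ref{lem:cupvee}(2) (the single removed cross-edge destroys the exact join-of-clique-unions structure). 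The main obstacle I anticipate is precisely condition (ii) combined with (iii): ensuring that the \emph{same} choice of $\beta,\gamma$ that kills the one intended cross-entry also admits a consistent norm match while keeping $v_3$ orthogonal to $v_1,v_2$ — this is a genuinely three-constraint system, and the delicate point (as in Lemma~\ref{lem:const2}) is showing the parameter ranges are compatible rather than over-determined. I expect this to go through for $m,n>1$ but to require the careful interval argument rather than a closed-form solution.
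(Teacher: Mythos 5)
First, a point of comparison: the paper does \emph{not} prove this statement --- it is stated as Conjecture 5.1 and explicitly left open, with only the cases $MB(\overline{P_6})=MB(\overline{P_7})=3$ verified. So there is no proof of record, and your proposal has to stand entirely on its own. As written it does not, because the specific ansatz you propose is over-determined in exactly the way you flag as the ``delicate point,'' and one can check it has no solution. Take $v_1,v_2$ from Lemma~\ref{lem:const2} with parameters $w_1,w_2$ and set $v_3=\beta e_x+\gamma e_y$ with $x,y$ the two singleton ($K_1$) vertices. Then $v_3\perp v_1$ forces $\gamma=-\beta$, and $v_3\perp v_2$ then forces $\beta(w_1-w_2)=0$, i.e.\ $w_1=w_2$. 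But for $u$ in the first singleton part and $v\in K_{n'}$ (a genuine edge of $\overline{T}$ untouched by $v_3$), the $(u,v)$ entry of $v_1v_1^T+v_2v_2^T$ equals $\sqrt{a_2/b_2}\,(w_1-w_2)$, which then vanishes; the matrix you build realizes $K_{m'+n'}\cup 2K_1$, not $\overline{T}$. The structural reason is worth naming: in Lemma~\ref{lem:const2} orthogonality of $v_3$ to the clique-constant vectors $v_1,v_2$ is obtained for free by making $v_3$ sum to zero on every clique it meets, and this symmetry forces the killed cross-entries to come in pairs --- which is precisely why that lemma removes \emph{two} disjoint edges. A singleton clique admits no nonzero zero-sum vector, so deleting a single cross edge requires abandoning either the clique-constant form of $v_1,v_2$ or the zero-sum trick; supplying that new idea is the actual content of the conjecture, and the proposal does not contain it.

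Two smaller issues. Your identification of the deleted edge's endpoints with the two isolated vertices is correct only for $k=2$; for $k=3,4$ at least one endpoint lies inside $K_{m'}$ or $K_{n'}$, which changes the constraint system (though not, I suspect, the underlying obstruction). For the lower bound, Lemma~\ref{lem:cupvee}(\ref{lem:cupvee:coclique}) gives nothing here: the largest independent set of $\overline{T}$ is the largest clique of the tree $T$, which has size $2$, so that bound only yields $MB(\overline{T})\geq 1$. What does work, and what you also mention, is that $\overline{T}$ is neither complete nor of the exact join-of-clique-unions form characterizing $MB=2$ in Lemma~\ref{lem:cupvee}(2), so $MB(\overline{T})\geq 3$ once $q(\overline{T})=2$ is established. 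That half is fine; the constructive half is where the gap lies.
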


Corollary~\ref{cor:canonical} gives many graphs in $MP([n-k,k])$, but it is not a characterization. Is it possible to determining the minimal (in terms of the vertex cloning) graphs in $MP([n-k,k])$, and then develop a characterization of the graphs in $MP([n-k,k])$?

Lemmas 4.2 and 4.3 of \cite{OS2} show that graphs $K_{2,2,\dots,2}$ and $K_{2,2,\dots,2,1}$ can achieve any multiplicity bipartition $[n-k,k]$ for $k=2,\ldots ,\lfloor \frac{n}{2}\rfloor$. Since all graphs $G$ with $MB(G)=2$ can be obtained from $K_{2,2,\dots,2}$ and $K_{2,2,\dots,2,1}$ by cloning vertices, and since cloning can preserve the multiplicity bipartition, this implies that if a graph achieves the multiplicity bipartition $[n-2,2]$, then it also achieves the multiplicity bipartition $[n-k,k]$ for $k>2$. Therefore, we have the following result.

\begin{lemma}
For any $n$, $MP([n-2,2])\subset MP([n-k,k])$ for $k \in \{2,\dots, \lfloor n/2 \rfloor\}$. 
\end{lemma}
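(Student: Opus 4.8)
The plan is to build the proof directly on the two ingredients the paragraph preceding the statement already supplies: the result from \cite{OS2} that the specific graphs $K_{2,2,\dots,2}$ and $K_{2,2,\dots,2,1}$ achieve every bipartition $[n-k,k]$ for $k\in\{2,\dots,\lfloor n/2\rfloor\}$, and Proposition~\ref{prop:cloning}, which controls how cloning a vertex moves a graph within the sets $MP([n-k,k])$. First I would recall from the discussion of Lemma~\ref{lem:cupvee}(2) (following \cite{CGMJ}) that every connected graph $G$ with $MB(G)=2$ is obtained from one of the minimal graphs $K_1$, $K_1\cup K_1$, $K_{2,1}$, $K_{2,2,\dots,2}$, or $K_{2,2,\dots,2,1}$ by a sequence of vertex clonings. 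So the strategy is: take an arbitrary $G\in MP([n-2,2])$, trace it back to its minimal ancestor, verify the statement for that ancestor, and then push the conclusion forward along the cloning sequence.

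Next I would fix $k\in\{2,\dots,\lfloor n/2\rfloor\}$ and argue in two stages. For the base graph $K_{2,2,\dots,2}$ (or $K_{2,2,\dots,2,1}$) on the appropriate number of vertices, the cited Lemmas~4.2 and~4.3 of \cite{OS2} give membership in $MP([m-k,k])$ for the relevant $k$ directly. For the inductive stage, suppose $H$ is obtained from a graph $G'$ (on $m$ vertices) by cloning a single vertex, and assume inductively that $G'\in MP([m-k,k])$. Then Proposition~\ref{prop:cloning} yields $H\in MP([(m+1)-k,k])$, since cloning increases the larger part by one while leaving the smaller part $k$ fixed. Iterating this along the cloning sequence that produces the given $G$ from its minimal ancestor shows $G\in MP([n-k,k])$, which is exactly the containment $MP([n-2,2])\subseteq MP([n-k,k])$. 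The strict inequality (using $\subset$) then follows by exhibiting, for each $k>2$, at least one graph in $MP([n-k,k])\setminus MP([n-2,2])$; Corollary~\ref{cor:canonical} and the constructions of Section~\ref{sec:construction} already supply such graphs (for example a graph with $MB$ exactly $k>2$, which by definition cannot lie in $MP([n-2,2])$).

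The main obstacle I anticipate is matching the vertex counts correctly across the argument. The clonings that generate $G$ from its minimal ancestor change $n$, and I must check that the value of $k$ remains admissible (i.e.\ $k\le\lfloor n/2\rfloor$) at every intermediate graph, not merely at the endpoints; in particular the base graph has fewer vertices than $G$, so I need the target $k$ to already be achievable at that smaller size, which is where the precise statements of \cite[Lemmas 4.2, 4.3]{OS2} must be invoked with care. A secondary subtlety is that cloning only ever increases the larger part, never the smaller one, so the chain of Proposition~\ref{prop:cloning} applications keeps the second coordinate pinned at $k$; I would state this monotonicity explicitly to make the induction clean rather than letting it remain implicit.
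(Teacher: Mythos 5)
Your proposal follows essentially the same route as the paper: the paper's justification is precisely the combination of Lemmas~4.2 and~4.3 of \cite{OS2} for the base graphs $K_{2,2,\dots,2}$ and $K_{2,2,\dots,2,1}$, the fact that every graph with $MB(G)=2$ arises from these by cloning, and Proposition~\ref{prop:cloning} to propagate achievability of $[\,\cdot\,,k]$ along the cloning chain. The vertex-count subtlety you flag (that $k$ must already be admissible for the smaller minimal ancestor, since Proposition~\ref{prop:cloning} never increases the second part) is a genuine point that the paper's own one-paragraph sketch also leaves unaddressed, so your version is, if anything, the more careful rendering of the same argument.
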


This raises the open question of whether or not $MP([n-k,k])\subset MP([n-k-1,k+1])$ for larger values of $k$.  Characterizing graphs with $MP([n-k,k])$ for $k>2$ (or even graphs $G$ with $MB(G)= 3$) would answer this question partially but this is likely a harder question. We also suspect that it is possible to generalize Lemma~\ref{Gjoinclique} to show that a graph $G\vee (K_{s_1}\cup K_{s_2}\cup\cdots \cup K_{s_d})$ (with the conditions stated in the Lemma) is in $MP([n-k,k])$ for all $k \geq d$. This leads to our next conjecture.

\begin{conjecture}
The complete multipartite graph $K_{k,k,\ldots,k}$ can achieve all multiplicity partitions except $[n-i,i]$ for $i<k$.
\end{conjecture}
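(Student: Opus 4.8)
The plan is to treat the conjecture's two halves separately, writing $n=\ell k$ with $\ell\ge 2$ parts throughout. The \emph{necessity} of the listed exceptions is the easy half and mimics Lemma~\ref{lem:cupvee}(3): each part is an independent set of size $k$, so the independence number is exactly $k$. If $K_{k,\dots,k}\in MP([n-i,i])$, then by Lemma~\ref{lem:vectorconst} there is $A=\sum_{t=1}^{i}u_tu_t^{T}\in\mathcal S(K_{k,\dots,k})$ with $\{u_t\}$ orthonormal; letting $U=[u_1,\dots,u_i]$, the $k$ rows of $U$ indexed by a single part are pairwise orthogonal and nonzero (there are no isolated vertices), hence $k$ linearly independent vectors in $\mathbb R^{i}$, so $i\ge k$. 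Thus every $[n-i,i]$ with $i<k$ fails, which is precisely the claimed exceptional set.

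For the \emph{bipartitions} $[n-j,j]$ with $k\le j\le\lfloor n/2\rfloor$ I would again use Lemma~\ref{lem:vectorconst} and build the realizing projection directly. The task becomes: choose for each part $V_t$ a $k$-dimensional subspace $W_t\subseteq\mathbb R^{j}$ together with a scaled orthonormal basis, so that (i) the weighted sum of the orthogonal projections $P_{W_t}$ equals $I_j$ (a tight fusion frame, whose trace condition forces $\sum_t c_t=j/k$), and (ii) every inner product between a basis vector of $W_t$ and one of $W_{t'}$ with $t\ne t'$ is nonzero, which encodes the cross-part edges, while the within-part orthogonality of each basis encodes the non-edges. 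Establishing that such configurations exist for \emph{every} $j$ in the range is the key point, after which a generic rotation in the spirit of Lemma~\ref{lem:Barrett} removes any zero inner products in (ii). This is the concrete form of the generalization of Lemma~\ref{Gjoinclique} anticipated in the text, and it is genuinely needed, since Corollary~\ref{cor:canonical} and the cloning results deliver only the single bipartition $[n-k,k]$ rather than the whole range.

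The partitions into \emph{three or more parts} are where I expect the real difficulty. A first attempt is to assemble building blocks: one already has the explicit three-eigenvalue realization $K_{k,\dots,k}\in MP([\ell(k-1),\ell-1,1])$ from the construction preceding Lemma~\ref{lem:achievablePs}, and one would then try either to split the dominant eigenspace of an achievable bipartition into several prescribed multiplicities while preserving the pattern, or to glue spectra coming from a partition of the parts into joined groups. The hard part will be maintaining the rigid zero/nonzero pattern (every cross-part entry nonzero, every within-part entry zero) while prescribing arbitrarily many eigenvalues: once three or more eigenvalues are allowed the problem no longer linearizes through an orthogonal projection, so there is no direct analogue of Lemma~\ref{lem:vectorconst} to exploit, and the prescribed multiplicities must be reconciled with the independence-number constraint simultaneously across all parts.

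Finally, I would point out that the literal statement needs further exceptions, and identifying them is part of the problem. The all-ones partition $[1,1,\dots,1]$ requires $q=n$, hence a path \cite{AACFMN}; but $K_{k,\dots,k}$ with $k\ge 2$ and $\ell\ge 2$ contains a $4$-cycle and is never a path, so $[1,\dots,1]$ is an unlisted forced exception, and more generally no partition into more parts than the maximum possible number of distinct eigenvalues of $K_{k,\dots,k}$ can occur. Thus the cleanest provable target is the bipartition statement above, and the crux of the full conjecture is to determine exactly which partitions into between three and that maximum number of parts are realizable.
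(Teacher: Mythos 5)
The statement you are addressing is a conjecture; the paper offers no proof of it, so there is nothing to compare your argument against. Judged on its own terms, your proposal establishes only the easy half: the argument that $[n-i,i]$ with $i<k$ is not achievable is correct and is exactly the independence-number argument of Lemma~\ref{lem:cupvee}(3). Everything in the positive direction is a programme rather than a proof: for the bipartitions $[n-j,j]$ with $k\le j\le\lfloor n/2\rfloor$ the existence of the tight fusion frames you describe is precisely the open content (it is the generalization of Lemma~\ref{Gjoinclique} that the paper itself flags as unproved), and for partitions into three or more parts you explicitly leave the construction open. So the conjecture remains unproved by your proposal, which is acceptable for a conjecture but should be stated as such.

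More importantly, your final paragraph contains a genuine error. You assert that $[1,1,\dots,1]$ is an unlisted forced exception because achieving it ``requires $q=n$, hence a path.'' This conflates two different things: $G\in MP([1,\dots,1])$ means \emph{some} matrix in $\mathcal{S}(G)$ has $n$ distinct eigenvalues, whereas $q(G)=n$ means \emph{every} such matrix does. By Theorem~\ref{thm:MS} (the Monfared--Shader theorem, quoted in the introduction of the paper), every connected graph --- in particular $K_{k,\dots,k}$ with at least two parts --- lies in $MP([1,1,\dots,1])$. For the same reason there is no ``maximum possible number of distinct eigenvalues'' of a connected graph short of $n$: the number of parts of an achievable partition is bounded below by $q(G)$, not above by anything smaller than $n$. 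So the additional exceptions you propose do not exist, and the conjecture's exceptional set may well be exactly right as stated.
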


Theorem~\ref{thm:joinpairk} proves for two graphs $G$ and $H$ with $q(G) =q(H)=2$ and $MB(G) = MB(H)$, that $MB(G\vee H)\leq MB(G)$. We conjecture that $MB(G\vee H)= MB(G)$ and that this holds in a more general setting.
\begin{conjecture}
Let $G$ and $H$ be two graphs such that $q(G) = q(H) =2$ with $MB(G) =k_1$ and $MB(H) = k_2$, and $\mrr+(G) = k_1$ and $\mrr+(H) = k_2$. Then $q(G \vee H) =2$ with $MB(G\vee H) = k$, where $k = \max \left\{k_1, k_2\right\}$.  
\end{conjecture}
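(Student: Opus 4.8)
The plan is to establish the two claims by bracketing $MB(G\vee H)$ between a semidefinite-rank lower bound and a constructive upper bound. Assume without loss of generality that $k_1\le k_2$, so $k=k_2$, and write $n_1=|V(G)|$, $n_2=|V(H)|$. For the lower bound, note that $G$ and $H$ are induced subgraphs of $G\vee H$ and that $\mrr+$ is monotone under induced subgraphs: a principal submatrix of a positive semidefinite matrix is again positive semidefinite and its rank does not exceed that of the full matrix (see~\cite{FH}). Hence
\[
\mrr+(G\vee H)\ \ge\ \max\{\mrr+(G),\mrr+(H)\}\ =\ \max\{k_1,k_2\}\ =\ k .
\]
Once $q(G\vee H)=2$ is known, Lemma~\ref{lem:mmr+} upgrades this to $MB(G\vee H)\ge \mrr+(G\vee H)\ge k$. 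It therefore suffices to prove $q(G\vee H)=2$ together with $MB(G\vee H)\le k$.

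For the upper bound I would reuse the gluing in the proof of Theorem~\ref{thm:joinpairk}. That argument builds a positive semidefinite $C=N^TN\in\mathcal S(G\vee H)$ with $\sigma(C)=\{0^{(n_1+n_2-k)},2^{(k)}\}$ from $N=[\,M_1\mid RM_2\,]$, where $M_1$ ($k\times n_1$) and $M_2$ ($k\times n_2$) have orthonormal rows, $M_1^TM_1\in\mathcal S(G)$, $M_2^TM_2\in\mathcal S(H)$, and $R$ is the orthogonal matrix from Lemma~\ref{lem:Barrett} chosen so that the off-diagonal block $M_1^TRM_2$ is entrywise nonzero; since $M_1M_1^T=M_2M_2^T=I_k$ one gets $NN^T=2I_k$ and $C^2=2C$. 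The block $M_2$ is available immediately: $MB(H)=k$ gives $H\in MP([n_2-k,k])$, and Lemma~\ref{lem:vectorconst} supplies the rank-$k$ projection in $\mathcal S(H)$. Thus the entire upper bound collapses to producing $M_1$, that is, to showing $G\in MP([n_1-k,k])$ for the \emph{larger} value $k=k_2>k_1=MB(G)$ (this also tacitly needs $k\le n_1$, which one should add as a hypothesis or verify separately, since $k=k_2\le n_2/2$ need not be at most $n_1$).

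By Lemma~\ref{lem:vectorconst}, establishing $G\in MP([n_1-k,k])$ amounts to finding orthonormal $z_1,\dots,z_k\in\mathbb R^{n_1}$ with $\sum_i z_iz_i^T\in\mathcal S(G)$. The pleasant point is that the eigenvalue condition is automatic: any orthonormal family gives an orthogonal projection, so $\sigma(\sum_i z_iz_i^T)=\{0,1\}$ with the correct multiplicities and no spectral tuning is required. The only real constraint is combinatorial---the Gram pattern of the $z_i$ must vanish on every non-edge of $G$. Concretely, I would start from the orthonormal system $u_1,\dots,u_{k_1}$ with $\sum_iu_iu_i^T\in\mathcal S(G)$ coming from $MB(G)=k_1$ and adjoin $k-k_1$ further orthonormal vectors $w_1,\dots,w_{k-k_1}$ inside $\{u_1,\dots,u_{k_1}\}^\perp$ whose outer-product sum $\sum_j w_jw_j^T$ is supported on the edge set of $G$ (so that every non-edge, already annihilated by the $u_i$, stays annihilated, while no edge entry is cancelled).

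The main obstacle is exactly the existence of this support-respecting orthonormal extension; it is precisely the open ``climbing'' inclusion $MP([n-k,k])\subset MP([n-k-1,k+1])$ raised just before the statement, known only for $k_1=2$ (the inclusion $MP([n-2,2])\subset MP([n-k,k])$), which is the reason the hypothesis $\mrr+(G)=MB(G)=k_1$ is imposed. Lemma~\ref{qi's} is the natural tool---it produces, for every rank $d\ge\mrr+(G)$, entrywise-nonzero vectors summing into $\mathcal S(G)$---but it controls only the rank and the nonvanishing of the vectors, not which off-diagonal Gram entries die, and in particular it does not enforce the orthogonality to $\operatorname{im}\bigl(\sum_iu_iu_i^T\bigr)$ that the projection structure demands. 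I therefore expect the argument to close cleanly exactly when $G$ admits all bipartitions $[n_1-k,k]$ with $k\ge MB(G)$---for instance the complete-multipartite and clique-join graphs of Corollary~\ref{cor:canonical} and Section~\ref{sec:construction}---and to require the resolution of the climbing problem in general; once the lift is in hand, the Barrett rotation and the lower bound above combine to give $MB(G\vee H)=k$.
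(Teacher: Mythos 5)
This statement is posed in the paper as an open conjecture; there is no proof in the paper to compare against, so the only question is whether your argument actually closes it. It does not, and you say so yourself. Your lower bound is sound: $\mrr+$ is monotone under taking induced subgraphs, so $\mrr+(G\vee H)\ge\max\{\mrr+(G),\mrr+(H)\}=k$, and once $q(G\vee H)=2$ is known, Lemma~\ref{lem:mmr+} upgrades this to $MB(G\vee H)\ge k$. Identifying this as the reason the hypotheses $\mrr+(G)=k_1$ and $\mrr+(H)=k_2$ appear in the statement is a genuinely useful observation. The upper bound, however, is where all of the difficulty lives, and your reduction leaves it untouched.

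Concretely, to run the gluing of Theorem~\ref{thm:joinpairk} you need a $k\times n_1$ matrix $M_1$ with orthonormal rows and $M_1^TM_1\in\mathcal S(G)$, i.e.\ $G\in MP([n_1-k,k])$ for $k=k_2>k_1=MB(G)$. The existence of such a lift is exactly the open climbing question $MP([n-k,k])\subset MP([n-k-1,k+1])$ that the paper raises immediately before this conjecture, and which is settled only at the bottom level $k=2$ (via the results of \cite{OS2} on $K_{2,\dots,2}$ and cloning). Your proposed construction---adjoin further orthonormal vectors in $\{u_1,\dots,u_{k_1}\}^\perp$ whose Gram pattern is supported on $E(G)$---is a restatement of that open problem, not a solution: Lemma~\ref{qi's} controls the rank and the entrywise nonvanishing of the vectors, but gives no control over which off-diagonal Gram entries vanish, and nothing in the paper supplies the required support condition. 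Note also that without this lift you have not even established $q(G\vee H)=2$, and that conclusion cannot be taken for granted: the paper's own example $Q_6\vee P_2$ shows that the join of two graphs with $q=2$ can have $q>2$ when the bipartition sizes differ. So the proposal is a correct framing of why the conjecture is plausible and what its hypotheses are for, but it does not prove either of its conclusions.
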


Lemma~\ref{lem:mmr+} states that for any graph $G$ with $q(G) = 2$, we have $ \mrr+(G) \leq MB(G)$, and currently we are not aware of any examples of such graphs in which this inequality is strict. However, we expect that these two graph parameters may differ in general for graphs that can achieve two distinct eigenvalues.
\begin{question}
Does there exist a graph $G$ with $MB(G) > \mrr+(G)$?
\end{question}

Finally we would like to consider the family of strongly regular graphs.  Any strongly regular graph $G$ has only three distinct eigenvalues, so $q(G) \leq 3$. Our final question is the following.
\begin{question}
Which strongly regular graphs (other than $K_n$, $K_{n,n}$, and their complements) admit only two distinct eigenvalues? Among the strongly regular graphs $G$ that admit only two distinct eigenvalues, determine $MB(G)$.
\end{question}
A strongly regular graph has parameters $(n,k;a,c)$ where $n$ is the number of vertices in the graph, and each vertex has degree $k$. The number of common vertices in the neighbourhoods of two adjacent vertices is $a$, and non-adjacent vertices is $c$. If a strongly regular graph $G$ has $c=1$, then by Lemma~\ref{pairwisecommonneighbour}, $q(G) = 3$.

\section*{Acknowledgments}
The work in this paper was a joint project of the
Discrete Mathematics Research Group at the University of Regina, attended by
all of the authors. Dr.~Adm's research was supported by the German Academic Exchange Service (DAAD) with funds from the German Federal Ministry of Education and Research (BMBF) and the People Programme (Marie Curie Actions) of the European Union's Seventh Frame-work Programme (FP7/2007-2013) under REA grant agreement No.605728 (P.R.I.M.E. - Postdoctoral Researchers International Mobility Experience) during his delegation to the University of Regina and work at University of Konstanz and revised during his work at Palestine Polytechnic University.
Dr.~Fallat's research was supported in part by NSERC Discovery Research Grants, Application Nos.: RGPIN-2014-06036 and RGPIN-2019-03934. Dr.~Meagher's research was supported in part by an NSERC Discovery Research Grant, Application No.: RGPIN-03952-2018. Dr.~Nasserasr's research was supported in part by an NSERC Discovery Research Grant, Application No.: RGPIN-2019-05275. Dr.~Plosker's research was supported by NSERC Discovery Grant number 1174582, the Canada Foundation for Innovation (CFI) grant number 35711, and the Canada Research Chairs (CRC) Program grant number 231250. Dr.~Yang's research was supported in part by an NSERC Discovery Research Grant, Application No.: RGPIN-2018-06800.

We wish to thank our colleagues Rupert H.~Levene, Polona Oblak, and Helena ~\v{S}migoc for pointing out 
an error with Lemma 3.4 from our original version of this paper.

\bibliographystyle{plain}

\end{document}